\title{Statistical limits of spiked tensor models}
\author[1]{Amelia Perry%
\footnote{The first two authors contributed equally.}%
\thanks{Email: {\tt ameliaperry@mit.edu}. This work is supported in part by NSF CAREER Award CCF-1453261 and a grant from the MIT NEC Corporation.}%
}
\newcommand{\firstauthmark}{\footnotemark[1]}
\author[1]{Alexander S.\ Wein%
\protect\firstauthmark%
\thanks{Email: {\tt awein@mit.edu}. This research was conducted with Government support under and awarded by DoD, Air Force Office of Scientific Research, National Defense Science and Engineering Graduate (NDSEG) Fellowship, 32 CFR 168a.}%
}
\author[2]{Afonso S.\ Bandeira%
\thanks{Email: {\tt bandeira@cims.nyu.edu}.
}}
\affil[1]{Department of Mathematics, Massachusetts Institute of Technology}
\affil[2]{Department of Mathematics and Center for Data Science, Courant Institute of Mathematical Sciences, New York University}
\begin{document}
\maketitle

\begin{abstract}

We study the statistical limits of both detecting and estimating a rank-one deformation of a symmetric random Gaussian tensor. We establish upper and lower bounds on the critical signal-to-noise ratio, under a variety of priors for the planted vector: (i) a uniformly sampled unit vector, (ii) \iid $\pm 1$ entries, and (iii) a sparse vector where a constant fraction $\rho$ of entries are \iid $\pm 1$ and the rest are zero. For each of these cases, our upper and lower bounds match up to a $1+o(1)$ factor as the order $d$ of the tensor becomes large. For sparse signals (iii), our bounds are also asymptotically tight in the sparse limit $\rho \to 0$ for any fixed $d$ (including the $d=2$ case of sparse PCA). Our upper bounds for (i) demonstrate a phenomenon reminiscent of the work of Baik, Ben Arous and P\'ech\'e: an `eigenvalue' of a perturbed tensor emerges from the bulk at a strictly lower signal-to-noise ratio than when the perturbation itself exceeds the bulk; we quantify the size of this effect. We also provide some general results for larger classes of priors. In particular, the large $d$ asymptotics of the threshold location differs between problems with discrete priors versus continuous priors. Finally, for priors (i) and (ii) we carry out the replica prediction from statistical physics, which is conjectured to give the exact information-theoretic threshold for any fixed $d$.

Of independent interest, we introduce a new improvement to the second moment method for contiguity, on which our lower bounds are based. Our technique conditions away from rare `bad' events that depend on interactions between the signal and noise. This enables us to close $\sqrt{2}$-factor gaps present in several previous works.

\end{abstract}

\vspace{0.1em}

\section{Introduction}

Among the central problems in random matrix theory is to determine the spectral properties of \emph{spiked} or \emph{deformed} random matrix ensembles. Introduced by Johnstone \cite{johnstone}, such matrices consist of a random matrix (e.g.\ Wigner or Wishart) with a low-rank perturbation. These distributions serve as models for data consisting of ``signal plus noise'' and thus, results on these models form the basis of our theoretical understanding of principal components analysis (PCA) throughout the sciences.

Perhaps the most studied phenomenon in these spiked ensembles is the transition first examined by \cite{bbp} in the Wishart setting. We will be interested in the Wigner analogue: if $W$ is a Gaussian Wigner matrix\footnote{$W$ is symmetric with off-diagonal entries $\cN(0,1/n)$, diagonal entries $\cN(0,2/n)$, and all entries independent (except for symmetry).} and $x$ is a unit vector (the `spike'), the spectrum of the spiked matrix $\lambda xx^\top + W$ undergoes a sharp phase transition at $\lambda = 1$ (see e.g.\ \cite{peche,fp,cdf,eig-vec}). Namely, when $\lambda \le 1$, many properties of the spectrum resemble those of a random (`unspiked') matrix: the empirical distribution of eigenvalues is semicircular and the top eigenvalue concentrates about $2$. When $\lambda > 1$, however, the spectrum becomes indicative of the spike: a single eigenvalue exceeds 2, exiting the semicircular bulk, and the associated eigenvector is correlated with the spike (and the precise correlation is known).

% push-out
We emphasize that this `BBP-style' transition \cite{bbp} exhibits a \emph{push-out} effect: the top eigenvalue of a random Wigner matrix is $2$ (in the high-dimensional limit), but one only needs to add a planted signal of spectral norm $1$ before it becomes visible in the spectrum \cite{fp}. Once $\lambda > 1$, the planted signal aligns well enough with fluctuations of the random matrix in order to create an eigenvalue greater than $2$.

More recent work shows a second, statistical role of this $\lambda=1$ threshold: not only does the top eigenvalue fail to distinguish the spiked and unspiked models for $\lambda < 1$, but in fact every hypothesis test fails with constant probability \cite{omh,mrz,pwbm}. Thus, this transition indicates the point at which the spiked and unspiked models become markedly different.

It is natural to ask how the phenomena above generalize to tensors of higher order. Such tensors lack a well-behaved spectral theory, and many standard tools of random matrix theory (e.g.\ the method of resolvents) fail to cleanly generalize. However, there remain a number of interesting probabilistic questions to ask in this setting:
\begin{itemize}
\item Is there a sharp transition point for $\lambda$, below which the spiked model resembles the unspiked model?

In particular, we compare the Wigner tensor model (entries of $W$ are Gaussian, \iid apart from permutation symmetry; see Definition~\ref{def:wig-tensor}) to a spiked analogue $T = \lambda x^{\otimes d} + W$. Previous work of \cite{mrz} provides a bound on $\lambda$ below which these two models are information-theoretically indistinguishable. On the other hand, \cite{rm} notes that once $\lambda$ exceeds the injective norm (defined below) of the noise, the spiked and unspiked models can be distinguished via injective norm. There remained a $\sqrt{2}$-factor gap between these bounds as the order $d \to \infty$. In this paper, we improve both the lower and upper bounds, saving an asymptotic $\sqrt{2}$ factor in the lower bound, and thus obtaining a $(1 + o(1))$ factor gap as $d \to \infty$; see Figure~\ref{fig:compare}.

\item By analogy with the top eigenvalue (spectral norm) of a random matrix, what is the \emph{injective norm}
$$ \|T\| \defeq \max_{\|x\|=1} \langle T, x^{\otimes d} \rangle $$
of a spiked Wigner $d$-tensor?

For unspiked tensors, the value of the injective norm was predicted by \cite{cs92} through non-rigorous methods from statistical physics, and was later rigorously proven \cite{tal06,abac,subag-pspin}. However, the spiked question has not been studied to our knowledge. Our statistical lower bound shows in particular that the injective norm of a spiked tensor remains identical to this value for all $\lambda$ below a critical value. However, starting slightly above this critical value, our work provides a strong lower bound on the injective norm of the spiked model $T = \lambda x^{\otimes d} + W$, which exceeds the injective norm of the unspiked model.

\item Do tensors exhibit a BBP-style push-out effect, and if so, how large is it?

We show that the injective norm of a spiked tensor exceeds that of an unspiked tensor strictly before the injective norm of the spike exceeds that of the noise, much as the $\lambda=1$ threshold in the matrix case strictly precedes the spectral norm of $2$ in the noise. We identify the asymptotic size of this gap as the order $d$ becomes large, up to a small constant factor.

\end{itemize}

% tensors
\begin{figure}[!ht]
    \centering
    \begin{subfigure}[t]{0.47\textwidth}
        \centering
        \includegraphics[width=\linewidth]{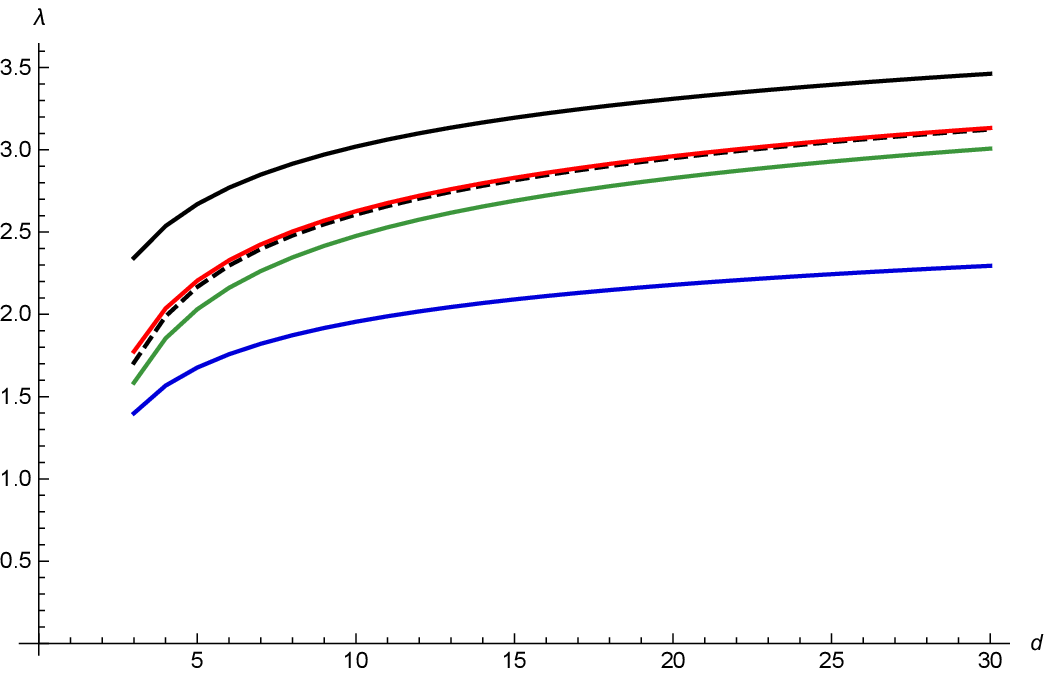}
    \end{subfigure}
    \hfill
    \begin{subfigure}[t]{0.47\textwidth}
        \centering
        \includegraphics[width=\linewidth]{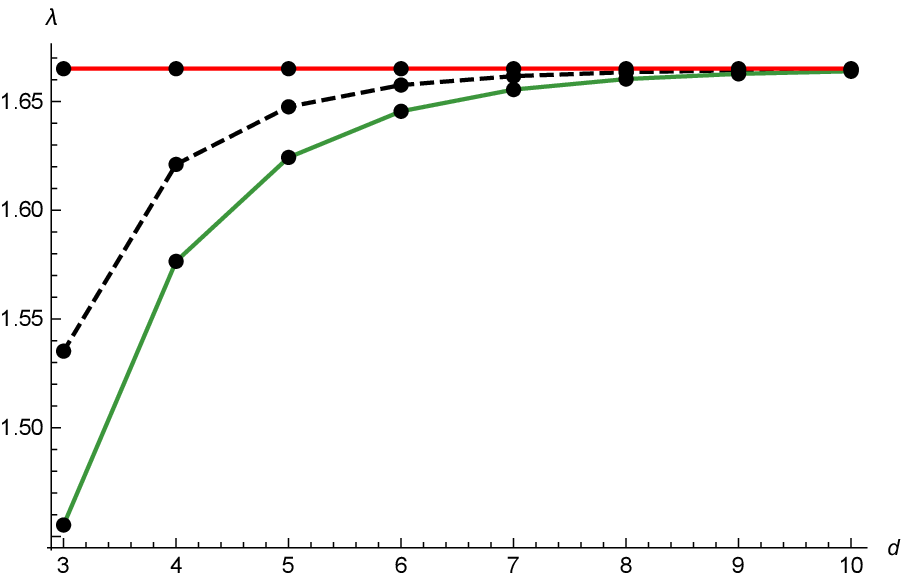}
    \end{subfigure}
    \caption{Critical SNR values $\lambda$ as a function of the tensor order $d$. Left: the spherical prior; from top to bottom, black: the injective norm $\mu_d$ of a noise tensor (see Theorem~\ref{thm:inj-norm}); red: our upper bound $\Lambda^*_{\sph,d}$ (Corollary~\ref{cor:sph-upper}); dashed: replica prediction for the exact threshold (Appendix~\ref{app:replica}); green: our lower bound $\lambda^*_{\sph,d}$ (Theorem~\ref{thm:sph-lower}); blue: the lower bound of \cite{mrz} which is loose by a factor of $\sqrt{2}$ for large $d$. Right: the Rademacher prior; from top to bottom, red: our upper bound (Theorem~\ref{thm:dinf-rademacher}); dashed: replica prediction, which is a rigorous upper bound (see Appendix~\ref{app:replica}); green: our lower bound $\lambda^*_{\Rade,d}$ (Theorem~\ref{thm:rad-lower}).}
    \label{fig:compare}
\end{figure}

% sparse Rademacher
\begin{figure}[!ht]
    \centering
    \includegraphics[width=0.7\linewidth]{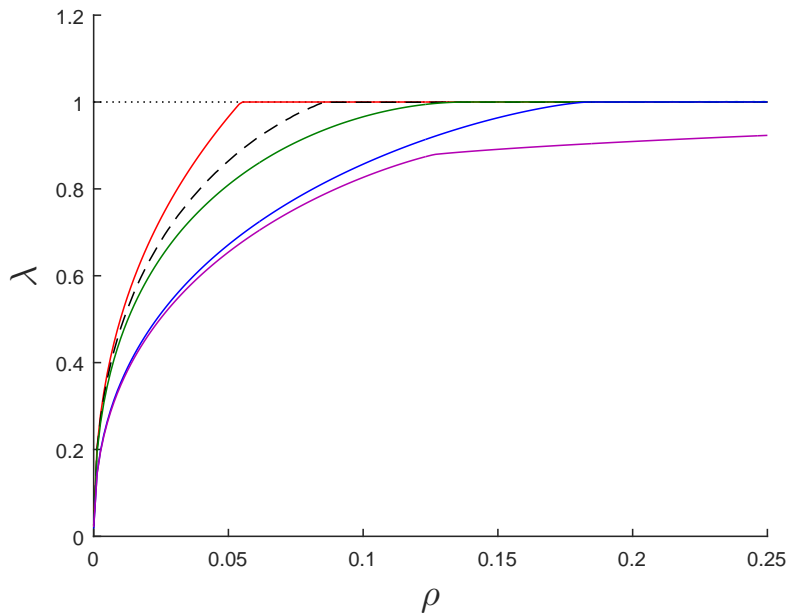}
    \caption{Upper and lower bounds for the critical $\lambda$ value in sparse Rademacher PCA ($d=2$) as a function of sparsity $\rho$. Note that $\lambda = 1$ corresponds to the eigenvalue transition above which efficient algorithms (such as PCA) are known to succeed. From top to bottom, red: MLE upper bound (Theorem~\ref{thm:dinf-sparse-rademacher}, first due to \cite{bmvvx}); dashed: replica prediction of \cite{mmse-lowrank} which has been rigorously shown to be correct for the weak recovery problem \cite{mi,mi-proof,lelarge-limits-lowrank}; green: our noise conditioning lower bound $\lambda^*_{\spRade(\rho),d}$ (Theorem~\ref{thm:sparse-rad-lower}); blue: our lower bound without noise conditioning \cite{pwbm}; purple: lower bound of \cite{bmvx} (equations (33),(34) in \cite{bmvx}). In the limit $\rho \to 0$, noise conditioning (green) achieves the correct asymptotic behavior (matching the upper bound) whereas the blue and purple curves are loose by a factor of $\sqrt{2}$.}
    \label{fig:sparse-rad}
\end{figure}

Much as random matrix theory provides a theoretical foundation for PCA, these questions probe at the statistical limits of \emph{tensor PCA}, the estimation of a low-rank spike from a spiked tensor. Such problems arise naturally in topic modeling \cite{anandkumar2014tensor}, in the study of hypergraphs \cite{duchenne2011tensor}, and more generally in moment-based approaches to estimation, when it may be desirable to detect low-rank structure in higher empirical moments. As many such problems involve extra structure such as sparsity in the signal, we allow the spike to be drawn from various priors, such as a distribution of sparse vectors. For each prior, we will investigate the detection problem of distinguishing the spiked and unspiked models, as well as the recovery problem of estimating the spike.

\paragraph{Our techniques and prior work.}

We will prove information-theoretic lower bounds using the second moment method associated with the statistical notion of \emph{contiguity}. By computing a particular second moment, one can show that the spiked and unspiked models are contiguous, implying that they cannot be reliably distinguished. This second moment method originated in the study of random graphs (see e.g.\ \cite{wor-survey,rw-ham,janson,1-fact}) but has since been applied to various average-case computational problems such as the stochastic block model \cite{mns,bmnn}, submatrix localization \cite{bmvvx}, Gaussian mixture clustering \cite{bmvvx}, synchronization problems over compact groups \cite{pwbm}, and even spiked tensor PCA \cite{mrz} and sparse PCA \cite{bmvvx}. However, many of the previous results are not tight in particular asymptotic regimes. In fact, there are many instances where curiously, in certain regimes they are loose by precisely a factor of $\sqrt{2}$ in the signal-to-noise ratio $\lambda$ \cite{mrz,bmnn,bmvx,pwbm}.

Our main technical contribution is a modification of the second moment method that closes (at least some of) these gaps. Specifically, we close the gap for spherically-spiked\footnote{Here, the spike is a uniformly random unit vector.} tensor PCA in the limit of large tensor order \cite{mrz}, and the gap\footnote{A more recent update \cite{bmvvx} independently closes the asymptotic $\sqrt{2}$-factor gap from \cite{bmvx}, using a different modification of the second moment method.} for sparse PCA in the limit of low sparsity \cite{bmvx}. Our technique, which we call \emph{noise conditioning}, is based on conditioning away from rare bad events that depend jointly on the signal and noise. We expect that this technique can also be used to close several other $\sqrt{2}$-factor gaps of the same nature. Another application in which we have found noise conditioning to be fruitful is in contiguity results for the Rademacher-spiked Wishart model \cite{pwbm}. For this probem the basic second moment method struggles quite badly because, due to a certain symmetry, it gives the same results for the positively- and negatively-spiked regimes, even though these two regimes have very different thresholds; the noise conditioning method is able to break this symmetry, yielding tight or almost-tight results in both regimes.

% updated paragraph for modified 2nd moment
Our noise conditioning method is somewhat reminiscent of other modified second moment methods that have appeared in other contexts such as branching Brownian motion \cite{bramson-branching}, branching random walks \cite{aidekon-branching,conv-branching}, the Gaussian free field \cite{bdg-gff,tightness-gaussian-field,conv-gaussian-field}, cover times for random walks \cite{cover-times}, and thresholds for random satisfiability problems (e.g.\ $k$-colorability, $k$-sat) \cite{condensation-hypergraph,catching-naesat,chasing-k-colorability,going-after-ksat,asymptotic-ksat}.

Our upper bound for the spherical prior, via a lower bound on the spiked injective norm, is based on a direct analysis of how vectors close to the spike can align constructively with fluctuations in the noise to produce a larger injective norm than the spike alone. In particular, we consider how such vectors align with submatrices of the given tensor, and leverage existing results from spiked matrix models. Upper bounds for structured priors are obtained through na\"ive union bounds on the maximum likelihood value.

A variety of other techniques originating from statistical physics have also been successful in tackling structured inference or optimization problems involving large random systems, including random matrices and tensors. For instance, random tensors are intimately connected to spin glasses with $d$-spin interactions \cite{gardner,cs92}. The so-called \emph{replica method} gives extremely precise non-rigorous solutions to these types of problems (see \cite{mm-book} for an introduction). In some cases, such as the celebrated Parisi formula for the ground state of a Sherrington--Kirkpatrick spin glass\footnote{This can be thought of as the maximum value of $x^\top W x$ over $x \in \{\pm 1\}^n$ where $W$ is a Gaussian Wigner matrix.}, the replica prediction has been rigorously proven to be correct \cite{tal06parisi}; we also rigorously know the injective norm of a random tensor of any order (in the high-dimensional limit), as well as various structural properties of the critical points of the associated maximization problem \cite{tal06,abac,subag-pspin}. Furthermore, for a variety of structured spiked matrix problems such as sparse PCA (with constant-fraction sparsity), the statistically-optimal mean squared error can be exactly characterized in the high-dimensional limit for any level of sparsity and any signal-to-noise ratio \cite{mi,mi-proof,lelarge-limits-lowrank}. Additionally, \cite{km} implies bounds on the statistical threshold for tensor PCA with a $\{\pm 1\}$-valued spike (which we discuss in Appendix~\ref{app:replica}). Some key techniques used in the above works include Guerra interpolation \cite{guerra}, the Kac--Rice formula (see \cite{abac}), and the approximate message passing (AMP) framework \cite{amp-cs,amp-mot,bm,jm} (see also \cite{sparse-pca-amp,dam,nonneg-pca,mi-proof}).

In comparison to these techniques from statistical physics, the second moment method for contiguity typically does not yield results that are as sharp, but it has the advantage of being quite simple and widely applicable. In particular, it can be applied to problems such as the sparse stochastic block model \cite{mns,bmnn} (constant average degree) for which various techniques in statistical physics do not seem to apply (see e.g.\ \cite{dam,lelarge-limits-lowrank}). Another advantage of the second moment method is that it addresses the detection problem instead of only the recovery problem, and furthermore implies bounds on hypothesis testing power below the detection threshold \cite{pwbm}. (However, unlike e.g.\ AMP, the second moment method only tells us about the threshold for nontrivial recovery and not the optimal recovery error at each value of $\lambda$ above the threshold.)

We remark that our results on statistical indistinguishability have concrete implications for various probabilistic quantities. In particular, for any quantity that converges in probability to a constant under the spiked and unspiked models (such as the injective norm), the limiting values must agree throughout the subcritical region of the spiked model (signal strength below the detection threshold)\footnote{In fact, for tensors of order $d \ge 3$ we show that the unspiked and subcritical-spiked distributions differ by $o(1)$ in total variation distance (in the high-dimensional limit), implying that any quantity with a limit in distribution must converge to the same distribution under both models.}. For example, in the unspiked model, \cite{abac,subag-pspin} give a detailed description of the energy landscape, i.e.\ the number of critical points of $x \mapsto \langle T, x^{\otimes d} \rangle$ of any given value and index; our results immediately imply that the same is true for subcritical spiked tensors. An interesting problem, that we largely do not address here, is to characterize the energy landscape above the detection threshold.

It is important to note that we are studying information-theoretic limits rather than computational ones. All of the upper bounds in this paper are inefficient algorithms such as exhaustive search over all possible spikes. There is good evidence in the form of sum-of-squares lower bounds that there is a significant gap between what is possible statistically and what is possible computationally for tensor PCA and related problems \cite{sos-tensor,sos-clique}. The $d=2$ case of matrices is of course an exception; in this case spectral algorithms achieve the optimal detection and recovery threshold in many cases. Various efficient algorithms for spiked tensor PCA are considered by \cite{rm}, but (as is believed to be necessary) these operate only in a regime that is quite far from the information-theoretic threshold.

\subsection{Preliminaries}

\begin{definition}
\label{def:wig-tensor}
We define a \emph{Wigner tensor} $W \in \RR^{n^d}$ of order $d$ by the following sampling procedure: an asymmetric precursor $W' \in \RR^{n^d}$ is drawn with entries sampled \iid from $\cN(0,2/n)$, and is then averaged over all permutations of the indices to form a symmetric tensor $W$. Thus, the distribution of $W$ has density proportional to $\exp(-\frac{n}{4} \langle W,W \rangle)$ on the space of symmetric tensors. We denote this distribution by $\WT(d)$.
\end{definition}

\noindent Note that a typical entry of $W$ (with no repeated indices) is distributed as $\cN(0,2/nd!)$, and for any unit vector $x$ we have $\langle x^{\otimes d}, W \rangle \sim \cN(0,2/n)$. This normalization agrees with that of \cite{mrz}, but other conventions exist in the literature.

Let the \emph{prior} $\cX = \{\cX_n\}$ be a family of distributions (one for each $n$) over unit vectors $x \in \RR^n$, $\|x\|=1$. Define the \emph{spiked} distribution $\WT(d,\lambda,\cX)$ by $T = \lambda x^{\otimes d} + W$, where $x$ is drawn from $\cX$, $\lambda \ge 0$ is a signal-to-noise parameter, and $W$ is drawn from $\WT(d)$.

The injective norm of a $d$-tensor $T$ is defined as $\|T\| = \max \langle x^{\otimes d}, T \rangle$ over unit vectors $x$. For each $d \ge 2$ it is known that the injective norm $\|\WT(d)\|$ of a random tensor converges in probability to a particular value $\mu_d$ (see Theorem~\ref{thm:inj-norm}) \cite{cs92,tal06,abac,rm,subag-pspin}. For $d=2$ we have $\mu_2 = 2$, the spectral norm of a Wigner matrix. We also have, for instance, $\mu_3 \approx 2.3433$, and $\mu_d = (1+o(1)) \sqrt{2 \log d}$ as $d\to\infty$.

In the matrix case $(d=2)$ we have the following classical eigenvalue transition for spiked Wigner matrices.
\begin{theorem}[\cite{peche,fp,cdf,eig-vec}]
\label{thm:bbp-wig}
Let $T = \lambda x x^\top + W$ where $x$ is a unit vector in $\RR^n$ and $W$ is an $n \times n$ Gaussian Wigner matrix (symmetric with off-diagonal entries $\cN(0,1/n)$, diagonal entries $\cN(0,2/n)$, and all entries independent up to symmetry).
\begin{itemize}
\item If $\lambda \le 1$, the top eigenvalue of $T$ converges almost surely to $2$ as $n \to \infty$, and the top eigenvector $v$ (unit-norm) has trivial correlation with the spike: $\langle v,x \rangle^2 \to 0$ almost surely.
\item If $\lambda > 1$, the top eigenvalue converges almost surely to $\lambda + 1/\lambda > 2$ and $v$ has nontrivial correlation with the spike: $\langle v,x \rangle^2 \to 1 - 1/\lambda^2$ almost surely.
\end{itemize}
\end{theorem}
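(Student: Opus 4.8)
The plan is to prove this by the resolvent, or secular-equation, method. Let $G(z) = (W - zI)^{-1}$ denote the resolvent of $W$; for $z$ outside the spectrum of $W$ one factors $T - zI = (W-zI)\bigl(I + \lambda\, G(z)\,xx^\top\bigr)$, so by the matrix determinant lemma $z$ is an eigenvalue of $T$ that is not already an eigenvalue of $W$ if and only if
\[
1 + \lambda\, x^\top G(z)\, x = 0 .
\]
The first step is to exploit the independence of $x$ and $W$ to show that the scalar $x^\top G(z) x$ concentrates: $x^\top G(z)\,x \to m(z)$ in probability, and almost surely via a high-probability bound and Borel--Cantelli, uniformly for $z$ in compact subsets of $(2,\infty)$, where $m(z) = (-z+\sqrt{z^2-4})/2$ is the Stieltjes transform of the semicircle law --- the branch satisfying $m(z)^2 + z\,m(z) + 1 = 0$ and $m(z) \in (-1,0)$ for $z > 2$. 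For a fixed $z > 2$ this is a Hanson--Wright-type concentration of $x^\top G(z) x$ about $\frac1n \mathrm{tr}\, G(z) \to m(z)$; the uniformity, and especially the control as $z$ approaches the spectral edge $2$, is exactly the content of the isotropic local semicircle law.

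Granting this, the limiting equation for an outlier reads $1 + \lambda\,m(z) = 0$. Since $m$ maps $(2,\infty)$ monotonically and bijectively onto $(-1,0)$, and since $\frac{d}{dz}\bigl(x^\top G(z)\,x\bigr) = x^\top G(z)^2 x \ge 0$ on the real axis (so $1+\lambda\,x^\top G(z)x$ is increasing there), the equation has a unique solution with $z > 2$ precisely when $\lambda > 1$; solving $m = -1/\lambda$ together with $z = -m - 1/m$ gives the outlier location $z^\ast = \lambda + 1/\lambda$. By Cauchy interlacing for the rank-one positive perturbation $\lambda xx^\top$, at most one eigenvalue of $T$ can exceed the second-largest eigenvalue of $W$, and the empirical spectral distribution of $T$ remains semicircular. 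Hence when $\lambda \le 1$ the top eigenvalue converges to $2$ (and $\lambda_{\max}(T) \ge \lambda_{\max}(W) \to 2$ since $\lambda xx^\top$ is positive semidefinite), while when $\lambda > 1$ there is exactly one outlier, converging to $\lambda + 1/\lambda$.

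For the eigenvector in the case $\lambda > 1$, let $v$ be a unit eigenvector with eigenvalue near $z^\ast$. From $(W - z^\ast I)\,v = -\lambda\langle x,v\rangle\,x$ we obtain $v = -\lambda\langle x,v\rangle\,G(z^\ast)\,x$, and taking norms,
\[
\langle x,v\rangle^2 \;=\; \frac{1}{\lambda^2\, x^\top G(z^\ast)^2 x} \;\longrightarrow\; \frac{1}{\lambda^2\, m'(z^\ast)} ,
\]
where we used $G(z)^2 = G'(z)$ and the companion concentration $x^\top G(z)^2 x \to m'(z)$. Differentiating $m^2 + zm + 1 = 0$ gives $m'(z) = -m/(2m+z)$, which at $z^\ast$ (where $m = -1/\lambda$) equals $1/(\lambda^2 - 1)$; substituting yields $\langle x,v\rangle^2 \to 1 - 1/\lambda^2$. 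When $\lambda \le 1$ the top eigenvector is an edge eigenvector of what is, to leading order, an unspiked Wigner matrix, and eigenvector delocalization --- again a consequence of the isotropic local law --- forces $\langle v,x\rangle^2 \to 0$.

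The main obstacle is the analytic input near the spectral edge: establishing the concentration of $x^\top G(z) x$ and $x^\top G(z)^2 x$ with enough uniformity to pin the top eigenvalue to $2$ exactly when $\lambda \le 1$ and to obtain eigenvector delocalization in that regime. Everything at macroscopic distance from the edge --- that is, any fixed $\lambda$ bounded away from $1$ --- is comparatively routine; the edge regime is the delicate part, and is precisely what the cited works \cite{peche,fp,cdf,eig-vec} supply.
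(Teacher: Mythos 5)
The paper does not prove this statement; it is imported verbatim from the cited literature \cite{peche,fp,cdf,eig-vec} and used as a black box (e.g.\ in the proof of Theorem~\ref{thm:sph-upper}). Your resolvent/secular-equation outline is the standard route taken in those references, and the computations check out: $m(z)=-1/\lambda$ with $z=-m-1/m$ gives $z^\ast=\lambda+1/\lambda$, and $m'(z^\ast)=1/(\lambda^2-1)$ gives $\langle x,v\rangle^2\to 1-1/\lambda^2$. You also correctly identify the genuinely hard input --- uniform concentration of $x^\top G(z)x$ and $x^\top G(z)^2x$ down to the spectral edge, and eigenvector delocalization for $\lambda\le 1$ --- as the content of the isotropic local semicircle law supplied by the cited works, so the sketch is a faithful account of the known proof rather than a new one.
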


All of our results will consider the $n \to \infty$ limit. We will be interested in the following detection and recovery problems. For the detection problems, we are given a single sample from either the spiked distribution $\WT(d,\lambda,\cX)$ or the corresponding unspiked distribution $\WT(d)$ (say each is chosen with probability $1/2$) and our goal is to decide which distribution the sample came from.
\begin{itemize}
\item{\bf Strong detection}: Distinguish between the spiked and unspiked distributions with success probability $1-o(1)$ as $n \to \infty$.
\item{\bf Weak detection}: Distinguish between the spiked and unspiked distributions with success probability $\frac{1}{2}+\eps$ as $n \to \infty$, for some $\eps > 0$ that does not depend on $n$.
\item{\bf Weak recovery}: Given a sample from the spiked model, output a unit vector $\hat x$ with nontrivial correlation with the spike: $\EE[\langle x,\hat x \rangle^d] \ge \eps$ for some $\eps > 0$ that does not depend on $n$.
\end{itemize}

\noindent Note that the exponent of $d$ in $\langle x,\hat x \rangle^d$ captures the fact that when $d$ is even we can only hope to learn the spike up to a global sign flip.

The three problems above are related as follows. Clearly strong detection implies weak detection, but there is no formal connection between detection and recovery in general (see e.g.\ \cite{bmvvx} for a simple counterexample). Typically, however, strong detection and weak recovery tend to be equivalent in the sense that they exhibit the same threshold for $\lambda$ above which they are possible. For spiked tensors with $d \ge 3$, we will see that this also tends to coincide with the weak detection threshold. However, for matrices ($d = 2$), weak detection is actually possible below the strong detection threshold; in fact it is possible for any $\lambda > 0$ simply by inspecting the trace of the matrix.

\subsection{Summary of results}
\label{sec:summary}

Our results apply to a wide range of spike priors, but here we present specific results for three priors:
\begin{enumerate}[label=(\roman*)]
\item the spherical prior $\cXs$, in which $x$ is drawn uniformly from the unit sphere in $\RR^n$,
\item the Rademacher prior $\cX_\Rade$, in which the entries of $x$ are drawn \iid from $\{\pm 1/\sqrt{n} \}$,
\item the sparse Rademacher prior $\cX_\spRade(\rho)$ for $\rho \in (0,1]$, in which a random support of exactly $\rho n$ entries is chosen uniformly, and those entries of $x$ are drawn \iid from $\{\pm 1/\sqrt{\rho n} \}$ (with all other entries $0$).
\end{enumerate}

\noindent Although we will give bounds for every $d$, our results are most precisely stated in the limit $d \to \infty$ in which case the lower and upper bounds match asymptotically.

\begin{theorem}[Spherical prior, large $d$]\label{thm:dinf-spherical} Consider the spherical prior $\cX_\sph$. There exist bounds $\lambda^*_{\sph,d}$ and $\Lambda^*_{\sph,d}$, both behaving as $\sqrt{2 \log d} + o(1)$ as $d \to \infty$, and with $\Lambda^*_{\sph,d} < \mu_d$ for each $d$, such that
\begin{itemize}
    \item if $\lambda < \lambda^*_{\mathrm{sph},d}$ then
    weak detection and weak recovery are impossible,
    \item if $\lambda > \Lambda^*_{\mathrm{sph},d}$ then
    strong detection and weak recovery are possible.
\end{itemize}
\end{theorem}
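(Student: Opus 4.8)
The two bullets are unrelated in difficulty and I would prove them separately; on the achievability side everything is routed through a lower bound on the injective norm of the spiked tensor.

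\emph{Lower bound (impossibility).} I would use the second moment method for contiguity, truncated so as to avoid the failure mode of the vanilla version. The likelihood ratio of $\WT(d,\lambda,\cX_\sph)$ against $\WT(d)$ is $L_n(T)=\EE_x\exp(\frac{n\lambda}{2}\langle T,x^{\otimes d}\rangle-\frac{n\lambda^2}{4})$, and a Gaussian integral gives $\EE_{\WT(d)}[L_n^2]=\EE_{x,x'}\exp(\frac{n\lambda^2}{2}\langle x,x'\rangle^d)$ for independent uniform $x,x'$; since the overlap has density $\propto(1-t^2)^{(n-3)/2}$, this is bounded exactly when $\frac{\lambda^2}{2}t^d+\frac12\log(1-t^2)\le 0$ on $[0,1)$, and for $d\ge 3$ the binding point sits at an overlap very close to $1$, producing the $\sqrt 2$-loose bound of \cite{mrz}. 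To repair this I would condition away from a bad event on signal--noise pairs, roughly $\mathcal B(v,W)=\{\exists\,\|y\|=1:\langle v,y\rangle^d\ge\delta,\ \langle W,y^{\otimes d}\rangle\ge\tau\}$ for suitable $\delta,\tau$, replacing $\WT(d,\lambda,\cX_\sph)$ by the law of $\lambda x^{\otimes d}+W$ with $(x,W)$ conditioned on $(x,W)\notin\mathcal B$; if $\Pr[\mathcal B]=o(1)$ this changes total variation by $o(1)$. Rerunning the second moment computation for the conditioned model, and performing the usual change of variables that recenters $W$ by $\lambda(x^{\otimes d}+x'^{\otimes d})$, inserts inside the overlap integral a factor $\Pr_V[(x,V+\lambda x'^{\otimes d})\notin\mathcal B,\ (x',V+\lambda x^{\otimes d})\notin\mathcal B]$. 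The crucial effect is that when $\langle x,x'\rangle^d$ is large the witness $y=x'$ forces $\mathcal B(x,V+\lambda x'^{\otimes d})$ (since $\langle V+\lambda x'^{\otimes d},x'^{\otimes d}\rangle\approx\lambda$), so this factor is $0$ and precisely the dangerous large-overlap contributions disappear; one is then left needing $\frac{\lambda^2}{2}t^d+\frac12\log(1-t^2)\le 0$ only for $t$ bounded away from $1$, and balancing the cutoff against $\Pr[\mathcal B]=o(1)$ yields $\lambda^*_{\sph,d}=\sqrt{2\log d}+o(1)$. A conditioned second moment equal to $1+o(1)$ then gives $o(1)$ total variation between the two models (for $d\ge 3$), ruling out weak detection, and the analogous conditioned bound on $\EE_{\WT(d)}[L_n^2\,\|\EE[x^{\otimes d}\mid T]\|^2]$ rules out weak recovery (cf.\ \cite{pwbm,bmvvx}).

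\emph{Upper bound (achievability).} It suffices to show that for $\lambda>\Lambda^*_{\sph,d}$ the injective norm $\|T\|$ exceeds $\mu_d$ by a constant with high probability: since $\|\WT(d)\|\to\mu_d$ by Theorem~\ref{thm:inj-norm} this yields strong detection, and any maximizer $\hat y$ satisfies $\lambda\langle x,\hat y\rangle^d=\langle T,\hat y^{\otimes d}\rangle-\langle W,\hat y^{\otimes d}\rangle\ge(\mu_d+\Omega(1))-(\mu_d+o(1))>0$, hence weak recovery. To lower-bound $\|T\|$ I would exhibit one trial vector built from $(x,W)$ --- an existential claim, so dependence on the hidden spike is harmless. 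Take the contraction $M=T[x^{\otimes(d-2)}]=\lambda xx^\top+W[x^{\otimes(d-2)}]$, which up to lower-order terms is a spiked Wigner matrix with spike strength of order $\lambda\sqrt{d(d-1)}$ and noise scale $\sim 1/d$, and let $y=\cos\theta\,x+\sin\theta\,w$ be its top unit eigenvector; by Theorem~\ref{thm:bbp-wig} the angle $\theta$ is small once $\lambda$ is large enough and the eigenvector aligns constructively with $W[x^{\otimes(d-2)}]$. Expanding $\langle T,y^{\otimes d}\rangle=\lambda\cos^d\theta+\sum_{b\ge 0}\binom db\cos^{d-b}\theta\sin^b\theta\,\langle W,x^{\otimes(d-b)}\otimes w^{\otimes b}\rangle$, the terms with $b\le 2$ are, given $x$, determined by $M$ and its top eigenpair (for instance the $b=2$ term is the top eigenvalue minus the spike part), hence controllable via Theorem~\ref{thm:bbp-wig}, while the terms with $b\ge 3$ are conditionally Gaussian with total variance $O(1/n)$ and thus negligible. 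Optimizing over $\theta$ shows $\langle T,y^{\otimes d}\rangle>\mu_d$ above an explicit threshold; since this constructive alignment lifts the injective norm past $\mu_d$ even for a spike of injective norm below $\mu_d$ (the tensor analogue of the BBP push-out), that threshold $\Lambda^*_{\sph,d}$ is strictly below $\mu_d$, and tracking the asymptotics of $\mu_d$, of the contracted-matrix scale, and of the BBP formulas gives $\Lambda^*_{\sph,d}=\sqrt{2\log d}+o(1)$.

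\emph{Main obstacle.} The crux of the lower bound is the noise-conditioning estimate $\Pr[\mathcal B]=o(1)$ for the sharpest admissible $(\delta,\tau)$, which amounts to an upper tail bound on the \emph{restricted} injective norm $\max_{\langle v,y\rangle\ge s}\langle W,y^{\otimes d}\rangle$ of a random Wigner tensor --- not a consequence of known injective-norm results, and it must be sharp enough to reach the constant $\sqrt 2$. On the achievability side the matching difficulty is making the expansion above fully rigorous, in particular controlling the non-Gaussian dependence of $y$ on $W$ and the cross terms indexed by $b$, with errors too small to spoil the same constant. Matching the two thresholds in the $d\to\infty$ limit is precisely what forces both estimates to be pushed to this precision.
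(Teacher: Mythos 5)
Your upper bound is a workable variant of the paper's argument, and it lands on the same mechanism. The paper (with spike $e_1$) takes $u$ to be the top eigenvector of the auxiliary matrix $\beta\frac d2 W_1W_1^\top+\sqrt{d(d-1)/2}\,W_2$ built from the order-one and order-two contractions of the \emph{noise}, so that Theorem~\ref{thm:bbp-wig} applies verbatim; you instead take the top eigenvector of the contraction $T[x^{\otimes(d-2)}]$. Be aware that this contraction is not a rank-one-spiked Wigner matrix: besides $\lambda e_1e_1^\top$ and the Wigner part $W_2$ it contains the cross term $e_1W_1^\top+W_1e_1^\top$ of norm $\sim\sqrt{2/d}$, which dwarfs $\|W_2\|\sim 1/d$, so Theorem~\ref{thm:bbp-wig} does not control its top eigenpair as stated. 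That cross term is in fact the whole point: the orthogonal component $w$ of your eigenvector aligns with $W_1=W[x^{\otimes(d-1)}]$, and it is the linear term $dm^{d-1}\sqrt{1-m^2}\,\langle W_1,w\rangle$ (with $\langle W_1,w\rangle\approx\sqrt{2/d}$), after re-optimizing the angle, that supplies the $+2+o(1)$ gain in $\|T\|^2$ over $\lambda^2$ needed to clear $\mu_d^2=2\log d+2\log\log d+2+o(1)$, as well as the infinite derivative at $\theta=0$ giving $\Lambda^*_{\sph,d}<\mu_d$ at each finite $d$. Since you re-optimize $\theta$ anyway, you could simply take $w=W_1/\|W_1\|$ and skip the eigenvector; the $W_2$ push-out only improves constants at finite $d$ and is asymptotically negligible.

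The lower bound has a genuine gap, and it is exactly the one you flag yourself. Your conditioning event $\mathcal B$ involves a supremum over all unit $y$ in a spherical cap around the spike, so establishing $\Pr[\mathcal B]=o(1)$ with $\tau<\lambda$ requires an upper bound on the restricted injective norm $\max_{\langle v,y\rangle\ge\delta^{1/d}}\langle W,y^{\otimes d}\rangle$ that is strictly below $\sqrt{2\log d}$. This is right at the edge of what crude methods give: the cap has measure roughly $e^{-\frac n2\log d}$ while the net overhead for a $d$-tensor is roughly $e^{n\log d}$, so a union bound yields only $\tau\lesssim\sqrt{2\log d}$ --- exactly the critical value --- and doing better would require machinery comparable to the proof of Theorem~\ref{thm:inj-norm} itself. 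The paper's noise conditioning sidesteps this entirely: the good event is $|\langle T,x^{\otimes d}\rangle-\lambda|\le\delta$, a single one-dimensional Gaussian near its mean, which has probability $1-o(1)$ for free. The $\sqrt2$ saving then comes not from annihilating the large-overlap contributions but from computing the joint probability that the two correlated Gaussians $\langle T,x^{\otimes d}\rangle$ and $\langle T,x'^{\otimes d}\rangle$ (correlation $\beta=\langle x,x'\rangle^d$) both land near $\lambda$, which replaces $\exp(\frac{n\lambda^2}{2}\beta)$ by $\exp(\frac{n\lambda^2}{2}\frac{\beta}{1+\beta})$ in the second moment --- a factor-$2$ saving in the exponent at $\beta=1$. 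Note also that even granting your restricted-norm estimate, your suppression factor is not exactly $0$ but $e^{-n(\lambda-\tau)^2/4}$ (the witness condition $\langle V+\lambda x'^{\otimes d},x'^{\otimes d}\rangle\ge\tau$ can fail with exponentially small probability), so you would still need to verify it dominates $e^{\frac{n\lambda^2}{2}t^d}$ against the shrinking spherical measure throughout the excised overlap window. As written, the impossibility half of the theorem is not established.
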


\noindent Recall that $\mu_d$ is the value of the injective norm of $\WT(d)$, which also behaves as $\sqrt{2\log d} + o(1)$; see Theorem~\ref{thm:inj-norm} for a formal statement, including the value of $\mu_d$. Explicit formulas for the bounds are given by Theorem~\ref{thm:sph-lower} ($\lambda^*_{\sph,d}$) and Theorem~\ref{thm:sph-upper} ($\Lambda^*_{\sph,d}$). We obtain more detailed asymptotic descriptions in Appendix~\ref{app:asymptotics}:
\begin{align*}
    \mu_d^2 &= 2 \log d + 2 \log \log d + 2 + o(1), \\
    (\Lambda_{\sph,d}^*)^2 &= 2 \log d + 2 \log \log d + o(1), \\
    (\lambda_{\sph,d}^*)^2 &= 2 \log d + 2 \log \log d + 2 - 4 \log 2 + o(1)
\end{align*}
in the limit $d \to \infty$. Hence the quantities $\mu_d$, $\Lambda_{\sph,d}$, and $\lambda_{\sph,d}$ all behave as $\sqrt{2 \log d} + o(1)$. Our lower bound closes a $\sqrt{2}$-factor gap in \cite{mrz}. A non-rigorous calculation via the replica method (Appendix~\ref{app:replica}) suggests that the true statistical threshold matches the asymptotics $2 \log d + 2 \log \log d + o(1)$ of the upper bound. These various quantities are depicted in Figure~\ref{fig:compare}.

It is clear that once $\lambda > \mu_d$, strong detection is possible (by thresholding the injective norm). Our upper bound implies that for each $d$, the spherically-spiked and unspiked tensor models can be distinguished as soon as $\lambda > \Lambda^*_{\sph,d}$, a threshold strictly below $\mu_d$; indeed, we show (Theorem~\ref{thm:sph-upper}) that the injective norm of $\WT(d,\lambda,\cXs)$ exceeds that of $\WT(d)$ for such $\lambda$. This mirrors the eigenvalue transition for $d=2$ (see Theorem~\ref{thm:bbp-wig}), in which an eigenvalue leaves the spectrum bulk when $\lambda > 1$, exhibiting a gap from when the spike exceeds the bulk in spectral norm at $\lambda = 2$. The results above imply that the size of this ``BBP gap'' lies between $\mu_d - \Lambda^*_{\sph,d}$ and $\mu_d - \lambda^*_{\sph,d}$ and is therefore of order $1/\sqrt{\log d}$ as $d \to \infty$.

We now present our results for discrete priors. One qualitative difference from the above is that for discrete priors, the statistical threshold tends to remain bounded as $d \to \infty$:

\begin{theorem}[Rademacher prior, large $d$]\label{thm:dinf-rademacher}
Consider the Rademacher prior $\cX_\Rade$. There exists a bound $\lambda^*_{\Rade,d}$, with $\lim_{d \to \infty} \lambda^*_{\Rade,d} = 2 \sqrt{\log 2}$, such that
\begin{itemize}
    \item if $\lambda < \lambda^*_{\mathrm{\Rade},d}$ then
    weak detection and weak recovery are impossible,
    \item if $\lambda > 2\sqrt{\log 2}$ then
    strong detection and weak recovery are possible.
\end{itemize}
\end{theorem}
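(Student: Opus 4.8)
The theorem splits into a positive direction ($\lambda > 2\sqrt{\log 2}$) and a negative direction ($\lambda < \lambda^*_{\Rade,d}$), which I would prove separately. The positive direction is the easier one and follows the same recipe as our other discrete priors: a union bound over the $2^n$ candidate spikes $x' \in \{\pm 1/\sqrt n\}^n$. Let $\hat x$ maximize $x' \mapsto \langle T, x'^{\otimes d}\rangle$ over this set (the maximum-likelihood objective). Under $\WT(d)$, each of the $2^n$ values $\langle W, x'^{\otimes d}\rangle$ is $\cN(0,2/n)$, so $2^n \exp(-n(2\sqrt{\log 2}+\eps)^2/4) \to 0$ shows $\max_{x'}\langle W, x'^{\otimes d}\rangle \le 2\sqrt{\log 2}+\eps$ with probability $1-o(1)$, for any fixed $\eps>0$; the same holds under the spiked model since there $W = T - \lambda x^{\otimes d}$ is again a sample from $\WT(d)$. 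Plugging the true spike $x$ into the objective gives $\langle T, x^{\otimes d}\rangle = \lambda + \langle W, x^{\otimes d}\rangle = \lambda - o(1)$ with high probability, so thresholding $\langle T, \hat x^{\otimes d}\rangle$ at $(\lambda + 2\sqrt{\log 2})/2$ achieves strong detection when $\lambda > 2\sqrt{\log 2}$. For weak recovery, note that any $x'$ in the support of $\cX_\Rade$ with $\langle x,x'\rangle^d < 1 - (2\sqrt{\log 2}+2\eps)/\lambda$ satisfies $\langle T, x'^{\otimes d}\rangle = \lambda\langle x,x'\rangle^d + \langle W, x'^{\otimes d}\rangle < \lambda - \eps \le \langle T, x^{\otimes d}\rangle$ with high probability (using the uniform bound above), so the maximizer obeys $\langle x, \hat x\rangle^d \ge 1 - (2\sqrt{\log 2}+2\eps)/\lambda - o(1)$, a positive constant when $\lambda > 2\sqrt{\log 2}$.

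The negative direction rests on the second moment method for contiguity, which I would sharpen via noise conditioning. The standard second moment of the likelihood ratio $L$ between $\WT(d,\lambda,\cX_\Rade)$ and $\WT(d)$ is
\[ \EE_{\WT(d)}[L^2] = \EE_{x,x'}\exp\!\Big(\tfrac{n\lambda^2}{2}\,\langle x,x'\rangle^d\Big), \]
with $x,x'$ independent draws from $\cX_\Rade$, so $n\langle x,x'\rangle$ is a sum of $n$ i.i.d.\ signs. Writing $I(s) = \frac{1+s}{2}\log(1+s) + \frac{1-s}{2}\log(1-s)$ for the Rademacher rate function, this is bounded iff $\frac{\lambda^2}{2}s^d \le I(s)$ for all $s$ with $s^d>0$; as $d\to\infty$ the binding constraint is at $s \to 1$, where $I(s) \to \log 2$, giving only $\lambda \le \sqrt{2\log 2}$ --- loose by $\sqrt 2$. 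The source of slack is the near-diagonal region $x' \approx \pm x$: it inflates the second moment even though it corresponds to an atypical configuration of the noise relative to the spike. I would therefore introduce a good event $A_n$ depending jointly on $x$ and $W$ --- namely that the noise does not correlate atypically strongly with hypercube vectors near $x$ --- which holds with probability $1-o(1)$ under the spiked model (again by the union bound $\max_{x'}\langle W, x'^{\otimes d}\rangle \le 2\sqrt{\log 2}+\eps$), and replace $L$ by the conditioned likelihood ratio $\tilde L$. Contiguity follows once $\EE_{\WT(d)}[\tilde L^2]$ is bounded, since conditioning on $A_n$ changes the spiked law negligibly in total variation.

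Carrying the indicator of $A_n$ through the second moment multiplies the overlap-$s$ contribution by a Gaussian large-deviation factor controlling the joint lower tail of the correlated pair $(\langle W, x^{\otimes d}\rangle, \langle W, x'^{\otimes d}\rangle)$ under the exponential tilt induced by $L^2$; the tilted means are $\lambda(1+s^d)$ and the correlation is $s^d$, so the boundedness requirement becomes roughly
\[ \tfrac{\lambda^2}{2}s^d \;\le\; I(s) + \frac{(\lambda(1+s^d)-\tau)_+^2}{2(1+s^d)} \qquad \text{for all } s, \]
with threshold $\tau = (1+o(1))\,2\sqrt{\log 2}$. At $s = 1$ this rearranges to $(\lambda - 2\sqrt{\log 2})^2 \ge 0$, so the $d\to\infty$ threshold rises to $2\sqrt{\log 2}$; for finite $d$ the full optimization over $s$ defines the explicit bound $\lambda^*_{\Rade,d}$ of Theorem~\ref{thm:rad-lower}, and one verifies $\lambda^*_{\Rade,d} \to 2\sqrt{\log 2}$ as $d\to\infty$ (the entropic cost $I(s)$ of intermediate overlaps makes those constraints non-binding in the limit). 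Boundedness of $\EE_{\WT(d)}[\tilde L^2]$ gives mutual contiguity, ruling out strong detection; for $d \ge 3$ the conditioned second moment is in fact $1 + o(1)$, giving $o(1)$ total variation distance and hence impossibility of weak detection, and impossibility of weak recovery follows by the standard argument (a fresh spike drawn from $\cX_\Rade$ has $o(1)$ correlation with any vector).

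The main obstacle is the noise-conditioning step. One must (i) design $A_n$ so that it is genuinely high-probability under the spiked model yet strong enough to tame the near-diagonal overlaps, (ii) evaluate the resulting conditional Gaussian integral over $W$ --- the joint lower-tail rate of two correlated Gaussians under an exponential tilt --- cleanly enough to obtain a usable inequality, and (iii) verify that the ensuing optimization over $s \in [-1,1]$ is, uniformly in $d$, dominated by the endpoint $s \to 1$, so that the limiting threshold is exactly $2\sqrt{\log 2}$ rather than some larger or smaller constant. By contrast the positive direction is essentially a routine union bound, and the passage $\lambda^*_{\Rade,d} \to 2\sqrt{\log 2}$ is a calculus exercise once the finite-$d$ bound is in hand.
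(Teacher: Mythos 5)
Your positive direction is correct and is exactly the paper's argument (Proposition~\ref{prop:cardinality-upper-bound}): a union bound over the $2^n$ candidate spikes, with $c=\log 2$ giving the threshold $2\sqrt{\log 2}$.

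The negative direction has a genuine gap in the choice of conditioning event. You condition on $A_n = \{\max_{v}\langle W, v^{\otimes d}\rangle \le \tau\}$ with $\tau = 2\sqrt{\log 2}+\eps$, i.e.\ a one-sided \emph{upper} bound on the noise--candidate correlations. But the configurations that inflate the diagonal term of the second moment are those where $\langle W, x^{\otimes d}\rangle \approx \lambda$: under the tilt by $\exp(n\lambda\langle T,x^{\otimes d}\rangle)$ coming from $L^2$ at overlap $s=1$, the dominant noise realizations have $\langle T,x^{\otimes d}\rangle\approx 2\lambda$, i.e.\ $\langle W,x^{\otimes d}\rangle\approx\lambda$. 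Since you are working in the regime $\lambda < 2\sqrt{\log 2} = \tau$, these realizations satisfy $\langle W,x^{\otimes d}\rangle < \tau$ and are \emph{not} excluded by $A_n$. Concretely, $A_n(x,T)\cap A_n(x',T)$ only forces $\langle T,x^{\otimes d}\rangle,\langle T,x'^{\otimes d}\rangle \le \tau + \lambda s^d$, whereas the tilted means are $\lambda(1+s^d)$; the deficit is $(\lambda-\tau)_+=0$, so the conditional Gaussian integral contributes no exponential saving and your conditioned second moment reduces to the unconditioned one, recovering only $\sqrt{2\log 2}$. Your displayed correction term $\frac{(\lambda(1+s^d)-\tau)_+^2}{2(1+s^d)}$ is the cost of forcing $y,y'\le\tau$, which your event does not do. The degeneracy you observe at $s=1$ --- the constraint collapsing to $(\lambda-2\sqrt{\log 2})^2\ge 0$, which holds for \emph{all} $\lambda$ --- is a symptom: a vacuous constraint cannot pin the threshold at $2\sqrt{\log 2}$, and if the formula were right it would prove contiguity above the matching upper bound.

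The fix is the paper's event: a \emph{two-sided} constraint $|\langle T,x^{\otimes d}\rangle - \lambda|\le\delta$ with $\delta=o(1)$ (equivalently $|\langle W,x^{\otimes d}\rangle|\le\delta$), which pins the noise--spike correlation to its typical value near zero and hence does exclude the dominant bad realizations. Carrying the two indicators through the Gaussian integral replaces $\exp(\frac{n\lambda^2}{2}s^d)$ by $\exp(\frac{n\lambda^2}{2}\frac{s^d}{1+s^d})$, and the resulting condition $\frac{\lambda^2}{2}\frac{s^d}{1+s^d}\le f_{\cX_\Rade}(s)$ gives a genuine, non-degenerate constraint $\frac{\lambda^2}{4}\le\log 2$ at $s=1$, i.e.\ exactly $\lambda^*_{\Rade,d}\to 2\sqrt{\log 2}$. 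One further technical point you should not skip: near $s=0$ the rate function vanishes, so condition (iii) cannot hold there and the interval $[0,\eps)$ must be handled separately via the local subgaussianity of $\langle x,x'\rangle$ (here $1/n$-subgaussian by Hoeffding), as in Section~\ref{sec:noise-conditioning-proof}.
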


\begin{theorem}[Sparse Rademacher prior, large $d$]\label{thm:dinf-sparse-rademacher}
Consider the sparse Rademacher prior $\cX_\spRade(\rho)$ with $\rho \in (0,1]$ fixed. There exists a bound $\lambda^*_{\spRade(\rho),d}$, with $\lim_{d \to \infty} \lambda^*_{\spRade(\rho),d} = 2 \sqrt{H(\rho) + \rho \log 2}$, such that
\begin{itemize}
    \item if $\lambda < \lambda^*_{\spRade(\rho),d}$ then
    weak detection and weak recovery are impossible,
    \item if $\lambda > 2\sqrt{H(\rho) + \rho \log 2}$ then
    strong detection and weak recovery are possible.
\end{itemize}
Here $H$ denotes the binary entropy: $H(\rho) = -\rho \log \rho - (1-\rho)\log(1-\rho)$.
\end{theorem}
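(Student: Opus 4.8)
The plan is to prove the theorem in three steps: (a) an upper bound, showing strong detection and weak recovery are possible for $\lambda > 2\sqrt{H(\rho)+\rho\log 2}$ via a union bound over the support of the prior; (b) a lower bound $\lambda^*_{\spRade(\rho),d}$ via the noise-conditioned second moment method; and (c) an asymptotic analysis of $\lambda^*_{\spRade(\rho),d}$ showing it tends to $2\sqrt{H(\rho)+\rho\log 2}$ as $d\to\infty$, so that the two bounds asymptotically coincide.

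\textbf{Upper bound.} The support of $\cX_\spRade(\rho)$ consists of $N=\binom{n}{\rho n}2^{\rho n}=\exp((1+o(1))\,n\,(H(\rho)+\rho\log 2))$ unit vectors, and for each such $y$ we have $\langle W,y^{\otimes d}\rangle\sim\cN(0,2/n)$. Since $\PP[\langle W,y^{\otimes d}\rangle>s]\le\exp(-ns^2/4)$, a union bound shows $\max_y\langle W,y^{\otimes d}\rangle\le 2\sqrt{H(\rho)+\rho\log 2}+o(1)$ with high probability under $\WT(d)$. Under the spiked model the planted $x$ achieves $\langle T,x^{\otimes d}\rangle=\lambda+\cN(0,2/n)=\lambda-o(1)$, so for $\lambda>2\sqrt{H(\rho)+\rho\log 2}$ thresholding $\max_y\langle T,y^{\otimes d}\rangle$ performs strong detection; moreover the maximizer $\hat x$ must satisfy $\langle x,\hat x\rangle^d\ge 1-2\sqrt{H(\rho)+\rho\log 2}/\lambda-o(1)>0$ (any $y$ of smaller overlap is beaten by $x$ itself), giving weak recovery.

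\textbf{Lower bound.} Let $P=\WT(d,\lambda,\cX_\spRade(\rho))$ and $Q=\WT(d)$. The standard computation gives $\EE_Q[(dP/dQ)^2]=\EE_{x,x'}\exp(\tfrac{n\lambda^2}{2}\langle x,x'\rangle^d)$ with $x,x'$ independent prior draws; for large $d$ this is dominated by the events $\{x'=\pm x\}$, each of probability $\exp(-(1+o(1))\,n\,(H(\rho)+\rho\log 2))$, whose contribution is bounded only for $\lambda\le\sqrt{2(H(\rho)+\rho\log 2)}$, loose by a factor $\sqrt 2$. To remove this I would pass to the modified spiked distribution $\tilde P$ obtained by drawing $x\sim\cX_\spRade(\rho)$ and $W\sim\WT(d)$, conditioning on the high-probability event $G=\{\langle W,x^{\otimes d}\rangle\le\eps_n\}$ for some $\eps_n$ with $n^{-1/2}\ll\eps_n\to 0$, and setting $T=\lambda x^{\otimes d}+W$; then $\mathrm{TV}(\tilde P,P)=o(1)$. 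Completing the square in the Gaussian integral, $\EE_Q[(d\tilde P/dQ)^2]$ equals, up to a $1+o(1)$ factor, $\EE_{x,x'}\left[\exp(\tfrac{n\lambda^2}{2}\langle x,x'\rangle^d)\cdot\PP_W\big(\langle W,x^{\otimes d}\rangle\le\eps_n-\lambda\langle x,x'\rangle^d\ \text{and}\ \langle W,x'^{\otimes d}\rangle\le\eps_n-\lambda\langle x,x'\rangle^d\big)\right]$ with $W\sim\WT(d)$. The new probability factor is exponentially small precisely when $\langle x,x'\rangle^d$ is bounded away from $0$: the pair $(\langle W,x^{\otimes d}\rangle,\langle W,x'^{\otimes d}\rangle)$ is bivariate Gaussian with variance $2/n$ and correlation $\langle x,x'\rangle^d$, so by the bivariate lower-tail rate this factor is $\exp(-(1+o(1))\tfrac{n\lambda^2 t^{2d}}{2(1+t^d)})$ when $t^d>0$ (and $\Theta(1)$ when $t^d\le 0$), where $t=\langle x,x'\rangle$. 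Combined with the overlap rate function $I(t)=H(\rho)+\rho\log 2-\rho H(\tfrac{1-t}{2})$ governing $\PP[\langle x,x'\rangle=t]$ near $t=\pm 1$, the conditional second moment is $1+o(1)$ provided $\sup_{t\in[-1,1]}\big(-I(t)+\tfrac{\lambda^2 t^d}{2(1+t^d)}\big)<0$, which defines $\lambda^*_{\spRade(\rho),d}$ (the explicit bound of Theorem~\ref{thm:sparse-rad-lower}). Contiguity then rules out strong detection; since the second moment is in fact $1+o(1)$, weak detection is impossible, and weak recovery is impossible by the standard argument relating it to the overlap of two samples from the posterior, which is controlled by the same computation.

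\textbf{Asymptotics and main obstacle.} As $d\to\infty$, $t^d\to 0$ for every $t\in(-1,1)$ while staying $1$ at $t=1$ (and $(-1)^d$ at $t=-1$, where $I$ takes the same value $H(\rho)+\rho\log 2$), so the supremum above collapses to the single term $\tfrac{\lambda^2}{4}-(H(\rho)+\rho\log 2)$, which is negative exactly for $\lambda<2\sqrt{H(\rho)+\rho\log 2}$; hence $\lambda^*_{\spRade(\rho),d}\to 2\sqrt{H(\rho)+\rho\log 2}$, matching the upper bound. The main obstacle is the noise-conditioned second moment estimate: one must choose a conditioning event that is negligible in total variation yet, after the change of variables, imposes a large-deviation penalty on the near-diagonal pairs $x\approx\pm x'$ strong enough to cancel the $\sqrt 2$ loss, while verifying that the off-diagonal bulk still sums to $1+o(1)$ uniformly over all $\lambda<\lambda^*_{\spRade(\rho),d}$; this demands careful joint control of the overlap rate function $I(t)$ and the bivariate Gaussian tail across the entire range of $t$, and in particular, for finite $d$, over the delicate regime where $t$ is close to but not equal to $\pm 1$.
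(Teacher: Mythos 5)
Your overall strategy coincides with the paper's: the union-bound/MLE upper bound is exactly Proposition~\ref{prop:cardinality-upper-bound}, the noise-conditioned second moment (conditioning on $\langle W,x^{\otimes d}\rangle$ being close to typical, upgrading $\exp(\frac{n\lambda^2}{2}\beta)$ to $\exp(\frac{n\lambda^2}{2}\frac{\beta}{1+\beta})$) is the content of Theorem~\ref{thm:noise-conditioning}, and the $d\to\infty$ collapse of the variational condition to its value at $t=\pm 1$ is Proposition~\ref{prop:renyi-lower-bound}. There is, however, a genuine gap in your lower bound: the function $I(t)=H(\rho)+\rho\log 2-\rho H(\frac{1-t}{2})$ is \emph{not} the rate function of the overlap $\langle x,x'\rangle$ except at $t=\pm 1$. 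It accounts only for pairs whose supports coincide entirely ($\zeta=\rho$ in the paper's notation); for intermediate $t$ the event $\{\langle x,x'\rangle\ge t\}$ is dominated by pairs with partially overlapping supports, and the correct rate function is $f_\rho(t)=\min_{\zeta}\,\bigl(G(\zeta)+\zeta f_\Rade(\rho t/\zeta)\bigr)$, a minimization over the support-overlap fraction combining a hypergeometric entropy term with a Rademacher Chernoff term (Section~\ref{sec:sparse-rad}). Since $I(t)\ge f_\rho(t)$ always (e.g.\ $I(0)=H(\rho)>0=f_\rho(0)$), your condition $\sup_t\bigl(-I(t)+\frac{\lambda^2 t^d}{2(1+t^d)}\bigr)<0$ is strictly weaker than the correct one, and the threshold it defines is \emph{not} the bound of Theorem~\ref{thm:sparse-rad-lower}; for such $\lambda$ the second moment can in fact diverge from intermediate overlaps. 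This does not affect the $d\to\infty$ limit (only $t=\pm 1$ matters there, where $I=f_\rho=H(\rho)+\rho\log 2$, the collision entropy), but a rigorous proof must verify that the true $f_\rho$ dominates $\frac{\lambda^2}{2}\frac{t^d}{1+t^d}$ on all of $(0,1)$ for large $d$, which your sketch does not do.

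A second omission is the near-zero-overlap regime, which supplies the leading $1+o(1)$ of the second moment and cannot be absorbed into a condition of the form $\sup_t(\cdots)<0$, since both the rate function and $\frac{t^d}{1+t^d}$ vanish at $t=0$. The paper handles $t\in[0,\eps)$ separately via an unconditioned moment computation together with a local subgaussianity property of the overlap, which for the sparse Rademacher prior requires a Chernoff-type tail bound with no polynomial prefactor (Proposition~\ref{prop:sparse-tail-strong}); this, together with the Stirling estimates behind Proposition~\ref{prop:sparse-tail-weak}, is a substantive part of the argument you leave implicit. Finally, your route to non-recovery via the posterior overlap differs from the paper's, which for $d\ge 3$ deduces it directly from $\TV(P_n,Q_n)=o(1)$ plus the observation that recovery is trivially impossible under the unspiked model; either route can work, but the TV argument is the one the conditional second moment bound immediately supports.
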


\noindent See Theorems~\ref{thm:rad-lower} and \ref{thm:sparse-rad-lower} for explicit formulas for the lower bounds. The upper bounds follow from Proposition~\ref{prop:cardinality-upper-bound}. We remark that the hardest case for sparse Rademacher is $\rho = 2/3$ (maximizing $H(\rho) + \rho \log 2$), where the three types of entries ($1/\sqrt{\rho n}, -1/\sqrt{\rho n}, 0$) occur in equal proportion. In the above three cases (spherical, Rademacher, sparse Rademacher), the lower and upper bounds agree up to a $1+o(1)$ factor as $d \to \infty$. The bounds $\lambda^*,\Lambda^*$ for each $d$ are described as finite-dimensional optimization problems and are easy to compute numerically. See Figure~\ref{fig:compare} for a comparison of the bounds for various values of $d$.

The special case of $d=2$ has been previously understood for the spherical and Rademacher priors \cite{dam,mrz,pwbm}. Namely, the threshold for both strong detection and weak recovery occurs precisely at $\lambda = 1$, matching the eigenvalue transition.

Another asymptotic regime that we consider is the sparse Rademacher prior with $d$ fixed and sparsity $\rho \to 0$, i.e.\ the limit of extremely sparse vectors.

\begin{theorem}[Sparse Rademacher prior, $\rho \to 0$]\label{thm:p0-sparse-rademacher}
Fix $d \ge 2$ and consider $\cX_\spRade(\rho)$. There exists a bound $\lambda^*_{\spRade(\rho),d}$ that behaves as $2\sqrt{-\rho \log \rho + O(\rho)}$ in the limit $\rho \to 0$, such that
\begin{itemize}
    \item if $\lambda < \lambda^*_{\spRade(\rho),d}$ then
    strong detection is impossible,
    \item if $\lambda > 2\sqrt{H(\rho) + \rho \log 2} = 2\sqrt{-\rho \log \rho + O(\rho)}$ then
    strong detection and weak recovery are possible.
\end{itemize}
For $d \ge 3$, the lower bound also rules out weak detection and weak recovery.
\end{theorem}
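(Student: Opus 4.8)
The plan is to establish achievability and impossibility separately, and then extract the claimed $\rho\to0$ asymptotics from each.

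\emph{Achievability.} The prior $\cX_\spRade(\rho)$ is supported on a set of cardinality $N=\binom{n}{\rho n}2^{\rho n}$ — choose the support, then the signs — so $\frac1n\log N\to H(\rho)+\rho\log 2$ by Stirling. Proposition~\ref{prop:cardinality-upper-bound} then shows that maximizing the likelihood over this support set achieves strong detection, and in fact exact (hence weak) recovery, whenever $\lambda>2\sqrt{H(\rho)+\rho\log 2}$. It remains only to expand this threshold: from $-(1-\rho)\log(1-\rho)=\rho+O(\rho^2)$ we get $H(\rho)+\rho\log 2=-\rho\log\rho+\rho(1+\log 2)+O(\rho^2)=-\rho\log\rho+O(\rho)$, so the threshold equals $2\sqrt{-\rho\log\rho+O(\rho)}$.

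\emph{Impossibility.} The assertions that $\lambda<\lambda^*_{\spRade(\rho),d}$ precludes strong detection for every $d\ge2$, and additionally precludes weak detection and weak recovery for $d\ge3$, are exactly the content of Theorem~\ref{thm:sparse-rad-lower}, proved there via the noise-conditioned second moment method; the reason $d=2$ is weaker is the familiar obstruction that the diagonal sum $\sum_i T_{ii}$ already achieves weak detection at any $\lambda>0$, whereas for $d\ge3$ the analogous statistic $\sum_i T_{i\cdots i}$ carries signal $\lambda\sum_i x_i^d=o(1)$ against $\Theta(1)$ noise and the subcritical spiked model is then within $o(1)$ total variation of the unspiked model. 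Thus the only remaining work is to determine the $\rho\to0$ behaviour of the finite-dimensional optimization defining $\lambda^*_{\spRade(\rho),d}$. That optimization weighs an energy term proportional to $\lambda^2 q^d$ against the large-deviation rate $R(q)$ for the overlap $q=\langle x,x'\rangle$ of two independent spikes. For $d\ge2$ one checks that $R(q)/q^d$ is minimized as $q\to1$ (the overlap rate function has infinite slope at the extreme overlap), with $R(1)=-\frac1n\log\Pr[x=x']=H(\rho)+\rho\log 2=-\rho\log\rho+O(\rho)$ there; feeding this in — together with the improvement by which noise conditioning asymptotically doubles the admissible $\lambda^2$ over the plain second moment in this regime — yields $(\lambda^*_{\spRade(\rho),d})^2=-4\rho\log\rho+O(\rho)$, i.e.\ $\lambda^*_{\spRade(\rho),d}=2\sqrt{-\rho\log\rho+O(\rho)}$. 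Hence the upper and lower thresholds both behave as $2\sqrt{-\rho\log\rho}\,(1+o(1))$ and pinch together as $\rho\to0$.

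\emph{The main obstacle} is this last asymptotic analysis of the optimization in Theorem~\ref{thm:sparse-rad-lower}. It is not a textbook Laplace estimate: the saddle point governing the overlap rate function runs off to infinity as $\rho\to0$, so the rate function must be controlled uniformly in this degenerating limit; one must confirm that the binding overlap really tends to $1$ and does not instead sit at an $O(\rho)$-scale spurious critical point or elsewhere in $(0,1]$; and one must verify that the extra optimization variable introduced by the noise-conditioning argument does not disturb the leading asymptotics. Finally, certifying that the error is genuinely $O(\rho)$ — rather than, say, $O(\rho\log\log(1/\rho))$ — requires carrying the relevant expansions one order past the $-\rho\log\rho$ term; this is routine but must be done carefully.
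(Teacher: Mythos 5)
Your architecture matches the paper's exactly: the upper bound is Proposition~\ref{prop:cardinality-upper-bound} applied to a support of size $\binom{n}{\rho n}2^{\rho n}$, with $H(\rho)+\rho\log 2=-\rho\log\rho+O(\rho)$, and the lower bound is Theorem~\ref{thm:sparse-rad-lower} (i.e.\ Theorem~\ref{thm:noise-conditioning} with the sparse Rademacher rate function) followed by a $\rho\to0$ asymptotic analysis of the resulting optimization. The problem is that this last step --- which you yourself flag as ``the main obstacle'' --- is the entire content of the paper's Proposition~\ref{prop:sparse-rad-p0} and occupies a full appendix; you assert its conclusion rather than prove it, so the proof is incomplete precisely where the work is.

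Moreover, the heuristic you offer for that step is partly wrong. First, the sparse Rademacher prior is discrete, so its rate function is \emph{finite} at $t=1$ (equal to the collision entropy density $H(\rho)+\rho\log 2$) rather than having ``infinite slope at the extreme overlap''; infinite steepness at $t=1$ is the signature of continuous priors such as the spherical one. Second, for fixed $\rho$ the infimum of $\frac{1+t^d}{t^d}f_\rho(t)$ is \emph{not} attained at $t=1$ --- if it were, the lower bound would coincide exactly with the upper bound $2\sqrt{H(\rho)+\rho\log 2}$, contradicting the visible gap in Figure~\ref{fig:sparse-rad}. What must actually be shown is that the infimum over all $t\in(0,1)$ and all support-overlap fractions $\zeta\in[\rho t,\rho]$ stays within $O(\rho)$ of the $t=1$ value as $\rho\to0$. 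The paper does this by (i) reducing to $d=2$ via monotonicity of $x\mapsto\frac{1+x}{x}$, (ii) bounding $f_\Rade(s)\ge s^2/2$, (iii) optimizing explicitly over $t$ with two cases according to whether the constraint $t\le\zeta/\rho$ binds, (iv) the hypergeometric bound $G(\zeta)\ge\zeta\log\bigl(\zeta(1-\rho)/(e\rho^2)\bigr)$, and (v) a Lambert-$\mathcal{W}$ estimate. None of this appears in your proposal. (Also, a small confusion: noise conditioning introduces no ``extra optimization variable''; it only replaces $t^d$ by $\frac{t^d}{1+t^d}$ in the exponent. The extra variable $\zeta$ comes from the sparse prior's rate function. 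And Proposition~\ref{prop:cardinality-upper-bound} gives weak, not exact, recovery.)
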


\noindent The $d = 2$ case of the above theorem was previously considered by \cite{bmvx} where they give a tight upper bound and a lower bound that is loose\footnote{A more recent update \cite{bmvvx} independently closes the asymptotic $\sqrt{2}$-factor gap from \cite{bmvx}, using a different modification of the second moment method.} by a factor of $\sqrt{2}$. Here we close the $\sqrt{2}$ gap by improving the lower bound. Our upper bounds for discrete priors are straightforward generalizations of their upper bound, based on exhaustive search over all possible spikes. The recovery problem for the sparse Rademacher prior (with $d=2$) has also been studied using tools from statistical physics. In particular, the weak recovery threshold is known exactly, as well as the optimal recovery quality at each value of $\lambda$ \cite{mmse-lowrank,mi,mi-proof,lelarge-limits-lowrank}. (These results actually consider a variant of the sparse Rademacher prior where the entries of the spike are \iid but we believe this does not change the information-theoretic limits of the problem.) See Figure~\ref{fig:sparse-rad} for a comparison of bounds for $d=2$ sparse Rademacher.

In Appendix~\ref{app:replica}, we present non-rigorous calculations through the replica method, predicting the precise location of the phase transition for each $d$, in the spherical and Rademacher cases. We have high confidence in the correctness of these predictions since replica predictions have been rigorously shown to be correct in various related settings (e.g.\ \cite{tal06parisi,tal06,mi,mi-proof,lelarge-limits-lowrank}). For the Rademacher prior, it can be deduced from \cite{km} that the replica prediction is a rigorous upper bound on the true threshold; see Appendix~\ref{app:replica}.

% talk about proof techniques (briefly)
Our upper bounds for discrete priors (Rademacher and sparse Rademacher) are obtained by straightforward analysis of the MLE (maximum likelihood estimator), i.e.\ the non-efficient procedure that enumerates all possible spikes and tests which one is most likely. Our upper bound for the spherical prior showing strict separation from the injective norm, is proven by harnessing known properties of the $d = 2$ case (eigenvalue transition in spiked matrices). Our lower bound techniques are discussed in the next section.

% outline
The rest of the paper is organized as follows. In Section~\ref{sec:bounds} we present the main tools used in our lower bounds, including the statement of our main lower bound theorem (Theorem~\ref{thm:noise-conditioning}) along with a sketch of its proof using our noise conditioning method. In Section~\ref{sec:sph} we prove our lower and upper bounds for the spherical prior (assuming Theorem~\ref{thm:noise-conditioning}). In Section~\ref{sec:discrete} we prove our lower and upper bounds for discrete priors (again assuming Theorem~\ref{thm:noise-conditioning}), including the Rademacher and sparse Rademacher priors. In Section~\ref{sec:noise-conditioning-proof} we prove Theorem~\ref{thm:noise-conditioning}. Some results are deferred to the appendix, including the replica calculations for the spherical and Rademacher priors (Appendix~\ref{app:replica}).

\section{Lower bound techniques}\label{sec:bounds}

\subsection{$\chi^2$-divergence, contiguity, and non-recovery}
\label{sec:contig}

\paragraph{$\chi^2$-divergence and TV distance}
For probability distributions $P$ and $Q$, with $P$ absolutely continuous with respect to $Q$, the $\chi^2$-divergence is defined as
$$ \chi^2(P \dmid Q) = \Ex_{Q}\left[ \left(\dd[P]{Q}\right)^2 \right] - 1. $$
When $P,Q$ are continuous distributions with densities $p,q$ respectively, note that this is simply
$$ \chi^2(P \dmid Q) = \Ex_{y \sim Q}\left[ \left(\frac{p(y)}{q(y)}\right)^2 \right] - 1.$$

Let $P_n = \WT_n(d,\lambda,\cX)$ be the spiked ensemble, and let $Q_n = \WT_n(d)$ be its unspiked analogue. Our lower bounds will proceed by bounding $\chi^2(P_n \dmid Q_n)$ (or slight variants) and drawing various conclusions from the value.

For tensors of order $d \geq 3$, we will bound the TV (total variation) distance via the inequality
\begin{equation} \TV(P_n,Q_n) \leq 2 \sqrt{\chi^2(P_n \dmid Q_n)}. \label{eq:chi2-pinsker}\end{equation}
In particular, if we can establish that $\chi^2(P_n \dmid Q_n) = o(1)$ as $n\to\infty$, it follows that $\TV(P_n,Q_n) = o(1)$, implying that weak detection is impossible.

\paragraph{$\chi^2$-divergence and contiguity} For matrices ($d=2$), we cannot hope to show $\TV(P_n,Q_n) = o(1)$ because weak detection is possible for all $\lambda > 0$ using the trace of the matrix. We instead show lower bounds against strong detection, using the notion of \emph{contiguity} introduced by Le~Cam \cite{lecam}.

\begin{definition}[\cite{lecam}]
Let $P_n$, $Q_n$ be sequences of distributions defined on the measurable space $(\Omega_n,\F_n)$. We say that $P_n$ is \defn{contiguous} to $Q_n$, and write $P_n \contig Q_n$, if for any sequence $A_n$ of events with $Q_n(A_n) \to 0$ as $n \to \infty$, we also have $P_n(A_n) \to 0$ as $n \to \infty$.
\end{definition}

\noindent Note that $P_n \contig Q_n$ implies that strong detection is impossible; to see this, suppose we had a distinguisher and let $A_n$ be the event that it outputs `$P_n$' to arrive at a contradiction. (The definition of contiguity is asymmetric but contiguity in either direction implies that strong detection is impossible.)

The following second moment method connects the $\chi^2$-divergence with contiguity.

\begin{lemma}[see e.g.\ \cite{mrz,bmvvx}]
\label{lem:sec}
If $\chi^2(P_n \dmid Q_n) = O(1)$ as $n \to \infty$, then $P_n \contig Q_n$.
\end{lemma}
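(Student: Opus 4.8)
The plan is to bound $P_n(A_n)$ directly in terms of $Q_n(A_n)$ and the $\chi^2$-divergence via a single application of Cauchy--Schwarz. First note that the hypothesis $\chi^2(P_n \dmid Q_n) = O(1)$ is only meaningful when the divergence is finite, and finiteness forces $P_n$ to be absolutely continuous with respect to $Q_n$ for all large $n$; in any case, in our applications this absolute continuity holds for every $n$, since both $P_n$ and $Q_n$ have strictly positive densities on the space of symmetric tensors (Definition~\ref{def:wig-tensor}). So let $L_n = \dd[P_n]{Q_n}$ denote the likelihood ratio and recall that $\chi^2(P_n \dmid Q_n) = \Ex_{Q_n}[L_n^2] - 1$.

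Now let $A_n$ be any sequence of events with $Q_n(A_n) \to 0$ as $n \to \infty$. Writing the spiked probability as $P_n(A_n) = \Ex_{Q_n}[\mathbf 1_{A_n} L_n]$ and applying Cauchy--Schwarz,
$$ P_n(A_n) \;\le\; \sqrt{\Ex_{Q_n}[\mathbf 1_{A_n}]}\,\sqrt{\Ex_{Q_n}[L_n^2]} \;=\; \sqrt{Q_n(A_n)}\,\sqrt{1 + \chi^2(P_n \dmid Q_n)}. $$
By hypothesis the second factor is bounded by a constant for all large $n$, while $\sqrt{Q_n(A_n)} \to 0$; hence $P_n(A_n) \to 0$, which is precisely the assertion $P_n \contig Q_n$.

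There is essentially no obstacle here beyond measure-theoretic bookkeeping: one must only ensure the likelihood ratio is well defined (guaranteed by the absolute continuity noted above), and observe that if $\chi^2(P_n \dmid Q_n)$ were infinite for finitely many small $n$ this would not affect the asymptotic conclusion. Equivalently, one could phrase the argument by noting that a family of random variables bounded in $L^2(Q_n)$ is uniformly integrable, so that $\Ex_{Q_n}[\mathbf 1_{A_n} L_n] \to 0$ whenever $Q_n(A_n) \to 0$; this is the same computation.
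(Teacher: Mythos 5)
Your proof is correct and is essentially identical to the paper's: both bound $P_n(A_n)=\Ex_{Q_n}[\mathbf 1_{A_n}L_n]$ by Cauchy--Schwarz, using the bounded second moment of the likelihood ratio together with $Q_n(A_n)\to 0$. The only cosmetic difference is that the paper keeps the integral of $L_n^2$ restricted to $A_n$ before bounding it by the full second moment, which changes nothing.
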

\noindent This is referred to as a \emph{second moment method} due to the ``second moment'' $\EE_{Q_n}\left(\dd[P_n]{Q_n}\right)^2$ appearing in the definition of $\chi^2$-divergence.
\begin{proof}
Let $A_n$ be a sequence of events. Using Cauchy--Schwarz,
\begin{align*}
P_n(A_n) 
%&= \int_{A_n} \,\dee P_n 
= \int_{A_n} \dd[P_n]{Q_n} \,\dee Q_n \le \sqrt{\int_{A_n} \left( \dd[P_n]{Q_n} \right)^2 \,\dee Q_n} \;\cdot\; \sqrt{\int_{A_n} \,\dee Q_n}. %\\
%&\le \sqrt{\Ex_{Q_n} \left( \dd[P_n]{Q_n} \right)^2} \;\cdot\; \sqrt{Q_n(A_n)}.
\end{align*}
The bound on $\chi^2$-divergence implies that the first factor on the right-hand side is bounded. This means if $Q_n(A_n) \to 0$ then also $P_n(A_n) \to 0$.
\end{proof}

To summarize, we now know that if the $\chi^2$-divergence is $o(1)$ then weak detection is impossible, and if the $\chi^2$-divergence is $O(1)$ then strong detection is impossible.

We remark that computing $\chi^2(Q_n \dmid P_n)$ seems significantly harder than computing $\chi^2(P_n \dmid Q_n)$ (where $P_n$ is spiked and $Q_n$ is unspiked). Establishing contiguity in the opposite direction $Q_n \contig P_n$ typically requires additional work such as the small subgraph conditioning method (see e.g.\ \cite{mns,bmnn,wor-survey}). Our methods will only require us to compute $\chi^2$-divergence in the `easy' direction.

\paragraph{$\chi^2$-divergence and conditioning}

It turns out that in some cases, the $\chi^2$-divergence can be dominated by extremely rare `bad' events, causing it to be unbounded even though $P_n \contig Q_n$. Towards fixing this, a key observation is that if we replace $P_n$ by a modified distribution $\tilde P_n$ that only differs from $P_n$ with probability $o(1)$ as $n\to\infty$ (i.e.\ $\TV(P_n,\tilde P_n) = o(1)$), this does not affect whether or not the detection and recovery problems can be solved. Therefore, by choosing $\tilde P$ to condition on the high-probability `good' events we can hope to make $\chi^2(\tilde P_n \dmid Q_n)$ controlled even in some cases when $\chi^2(P_n \dmid Q_n)$ is not. If we can show $\chi^2(\tilde P_n \dmid Q_n) = O(1)$ it follows that $\tilde P_n \contig Q_n$, which implies $P_n \contig Q_n$ as desired. Prior work (e.g.\ \cite{bmnn,bmvx,pwbm}) has applied this conditioning technique to `good' events depending on the spike $x$; specifically, $\tilde P_n$ enforces that the entries of $x$ occur in close-to-typical proportions. As the main technical novelty of the current work, we introduce a technique that we call \emph{noise conditioning} where we condition on `good' events that depend jointly on the spike $x$ and the noise $W$. Specifically, we require $\langle W,x^{\otimes d} \rangle$ to have close-to-typical value. This method will allow us to obtain asymptotically tight lower bounds in cases where prior work \cite{mrz,bmvx} has been loose by a constant factor of $\sqrt{2}$. The noise conditioning method is discussed further in Section~\ref{sec:noise-cond}.

\paragraph{$\chi^2$-divergence and non-recovery}

First consider the case $d \ge 3$ where we are able to show $\TV(P_n,Q_n) = o(1)$ as $n\to\infty$. It follows immediately that weak recovery is impossible, provided that weak recovery is impossible in the unspiked model $Q_n$. Weak recovery is of course impossible in $Q_n$ for any reasonable spike prior, and if we had an `unreasonable' prior then we should not have had $\TV(P_n,Q_n) = o(1)$ in the first place. Our non-recovery proof in Section~\ref{sec:pf-non-recovery} makes this precise, showing that for the spiked tensor model, if $\TV(P_n,Q_n) = o(1)$ then weak recovery is impossible.

In the case $d=2$, non-recovery is less straightforward. Various works have forged a connection between bounded $\chi^2$-divergence and non-recovery \cite{bmnn,bmvx}. For a wide class of problems with additive Gaussian noise, Theorem~4 of \cite{bmvx} implies that if the $\chi^2$-divergence is $O(1)$ then weak recovery is impossible. Unfortunately their result cannot be immediately applied in our setting because, due to our noise conditioning technique, we do not exactly have an additive Gaussian noise model. We will leave the proof of non-recovery for $d=2$ for future work, although we strongly believe it to be true (in the same regime where strong detection is impossible).

\subsection{Large deviation rate functions}

Our lower bounds on tensor PCA problems will depend on the prior through tail probabilities of the correlation $\langle x,x' \rangle$ of two independent spikes drawn from the prior; note that this quantity is typically of order $1/\sqrt{n}$ but may be as large as $1$. We require detailed tail information, which we summarize in two objects: a \emph{rate function}, describing the asymptotic behavior of deviations of order $1$, and a \emph{local subgaussian condition} \cite{chareka2006}, bounding the deviations of size $[0,\eps]$ non-asymptotically.

First we define the rate function $f_\cX$ corresponding to a prior $\cX$. Intuitively, $f_\cX:[0,1) \to [0,\infty)$ is roughly defined by $\prob{\langle x,x' \rangle \geq t} \approx \exp(-n f_\cX(t))$. More formally, one should think of $f_\cX(t)$ as being equal to $- \lim_{n \to \infty} \frac{1}{n} \log \prob{\langle x,x' \rangle \geq t}$ but we will not technically require this in our definition since it will be ok to use a weaker (smaller) rate function than the `true' one (although we will always use the `true' one in our examples). For technical reasons, we formally define the rate function as follows.

\begin{definition}\label{def:rate-function}
Let $\cX = \{\cX_n\}$ be a prior supported on unit vectors in $\RR^n$. For $x,x'$ drawn independently from $\cX_n$ and $t \in [0,1)$, let
$$ f_{n,\cX}(t) = -\frac1n \log \prob{\langle x,x' \rangle \geq t}. $$
Suppose we have $f_{n,\cX}(t) \geq b_{n,\cX}(t)$ for some sequence of functions $b_{n,\cX}$ that converges uniformly on $[0,1)$ to $f_\cX$ as $n \to \infty$. Then we call such $f_{\cX}$ the \emph{rate function} of the prior $\cX$.
\end{definition}

\begin{remark}
\label{rem:tail-unif}
The condition $f_{n,\cX} \ge b_{n,\cX} \to f_\cX$ (uniformly) is satisfied if we have a tail bound of the form $\prob{\langle x,x' \rangle \ge t} \le \mathrm{poly}(n) \exp(-n f_\cX(t))$.
\end{remark}

\noindent We will assume that the distribution of $\langle x,x' \rangle$ is symmetric about zero, so that only the upper tail bound is required. Note the following basic properties of the rate function (which we can assume without loss of generality): $f_\cX(0) = 0$ and $f_\cX$ is monotone increasing. We will see that for continuous priors (such as the spherical prior), the rate function blows up to infinity at $t=1$, whereas for discrete priors (such as Rademacher) it remains finite at $t=1$.

We now present the local subgaussianity condition which gives tighter control of the small deviations.

\begin{definition}\label{def:locally-subg}
Let $\cX$ be a prior supported on unit vectors in $\RR^n$. We say that $\cX$ is \emph{locally subgaussian with constant $\sigma^2$} if for all $\eta > 0$, there exists $T > 0$ with
$$ \Pr_{x,x' \sim \cX}[\langle x,x' \rangle \ge t] \lesssim \exp\left( -\frac{n t^2}{2\sigma^2+\eta} \right) \qquad \forall t \in [0,T]. $$
\end{definition}
\noindent Here $A \lesssim B$ means there exists a constant $C$ such that $A \le CB$. (Above, $C$ is allowed to depend on $\eta$ but not $n$.) Note that if $\langle x,x' \rangle$ is $(\sigma^2/n)$-subgaussian, then $\cX$ is locally subgaussian with constant $\sigma^2$. However, the converse is false: for instance, the sparse Rademacher prior with sufficiently small density $\rho$ has a strictly better local subgaussian constant than its subgaussian constant.

\subsection{Main lower bound theorem}

The following theorem encapsulates our lower bound result:

\begin{theorem}\label{thm:noise-conditioning}
Let $d \geq 2$, and let $\cX$ be a prior supported on unit vectors in $\RR^n$. Suppose the following holds for some $\lambda^* > 0$.
\begin{enumerate}[label=(\roman*)]
    \item The distribution of $\langle x,x' \rangle$ (where $x,x'$ are drawn independently from $\cX$) is symmetric about zero,
    \item $\cX$ has a rate function $f_\cX$,
    \item $f_\cX(t) \ge \frac{(\lambda^*)^2}{2} \frac{t^d}{1+t^d}$ for all $t \in [0,1)$,
    \item $\cX$ is locally subgaussian with some constant $\sigma^2$,
    \item if $d=2$, $\lambda^* \leq 1/\sigma$.
\end{enumerate}
Then for all $\lambda < \lambda^*$:
\begin{itemize}
    \item $\WT(d,\lambda,\cX)$ is contiguous to $\WT(d)$, so no hypothesis test to distinguish them achieves full power (i.e.\ strong detection is impossible),
    \item if $d \ge 3$, $\TV(\WT(d,\lambda,\cX),\WT(d)) = o(1)$, so every hypothesis test has asymptotically zero power (i.e.\ weak detection is impossible),
    \item if $d \ge 3$, no estimator $\hat x = \hat x(T)$, when given a sample $T \sim \WT(d,\lambda,\cX)$, achieves expected correlation $\EE |\langle x,\hat x \rangle|$ (where $x$ is the true spike) bounded above $0$ as $n \to \infty$ (i.e.\ weak recovery is impossible).
\end{itemize}
\end{theorem}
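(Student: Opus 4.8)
The plan is a (conditioned) second-moment argument. Write $P_n=\WT(d,\lambda,\cX)$ and $Q_n=\WT(d)$, and for a fixed spike $x$ let $P_{n,x}$ be the law of $T=\lambda x^{\otimes d}+W$. Completing the square in the Gaussian density (using $\langle x^{\otimes d},W\rangle\sim\cN(0,2/n)$ and that under $Q_n$ the pair $\langle x^{\otimes d},T\rangle,\langle (x')^{\otimes d},T\rangle$ is jointly Gaussian with variances $2/n$ and covariance $\tfrac2n\langle x,x'\rangle^d$) gives the standard formula
$$ \chi^2(P_n\dmid Q_n)+1=\Ex_{x,x'}\exp\!\big(\tfrac{n\lambda^2}{2}\langle x,x'\rangle^d\big), $$
with $x,x'$ drawn independently from $\cX$. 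This quantity can be infinite, since rare large values of $\langle x,x'\rangle$ dominate; and even when finite, the crude bound using only hypothesis~(iii) would control it only for $\lambda<\lambda^*/\sqrt2$, because $\sup_{t\in[0,1)}(1+t^d)=2$. To obtain the sharp constant $\lambda^*$ we replace $P_n$ by a conditioned law, using \emph{noise conditioning}.

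Fix any $\lambda<\lambda^*$, pick a cutoff $\eta_n\to0$ with $\eta_n\sqrt n\to\infty$ (say $\eta_n=n^{-1/4}$), and let $\tilde P_n$ be the law of $T$ under the spiked model conditioned on the event $\Omega=\{|\langle W,x^{\otimes d}\rangle|\le\eta_n\}=\{|\langle T,x^{\otimes d}\rangle-\lambda|\le\eta_n\}$. Since $P_{n,x}(\Omega)=\Pr[\,|\cN(0,2/n)|\le\eta_n\,]=:p_n$ is independent of $x$ and $p_n\to1$, convexity of total variation gives $\TV(P_n,\tilde P_n)\le 1-p_n=o(1)$, so it suffices to analyze $\tilde P_n$. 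Running the second-moment computation with the extra indicators, the same completion of the square yields
$$ \chi^2(\tilde P_n\dmid Q_n)+1=\tfrac{1}{p_n^2}\,\Ex_{x,x'}\!\big[\exp\!\big(\tfrac{n\lambda^2}{2}\langle x,x'\rangle^d\big)\,\Psi_n(\langle x,x'\rangle)\big], $$
where $\Psi_n(\rho)=\Pr[\,A\in[\lambda-\eta_n,\lambda+\eta_n],\ B\in[\lambda-\eta_n,\lambda+\eta_n]\,]$ for $(A,B)$ bivariate Gaussian with means $\lambda(1+\rho^d)$ and covariance $\Sigma$ with $\Sigma_{11}=\Sigma_{22}=\tfrac2n$, $\Sigma_{12}=\tfrac2n\rho^d$. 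The key point is that in the coordinates $U=\tfrac{A+B}{2},\,V=\tfrac{A-B}{2}$, which are \emph{independent} Gaussians $U\sim\cN(\lambda(1+\rho^d),(1+\rho^d)/n)$ and $V\sim\cN(0,(1-\rho^d)/n)$, the box forces $|U-\lambda|\le\eta_n$ and $|V|\le\eta_n$, so that
$$ \Psi_n(\rho)\ \le\ \exp\!\Big(-\frac{n(\lambda\rho^d-\eta_n)^2}{2(1+\rho^d)}\Big)\quad\text{whenever }\lambda\rho^d>\eta_n, $$
and $\Psi_n(\rho)\le1$ always. This gain is exactly tuned to cancel the factor $(1+t^d)^{-1}$ in hypothesis~(iii).

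It remains to bound the expectation. Using the symmetry of $\langle x,x'\rangle$ (hypothesis~(i)) we may work with $|\langle x,x'\rangle|$, and we split at a small constant $\epsilon>0$. On $\{|\langle x,x'\rangle|\le\epsilon\}$ we discard $\Psi_n\le1$ and apply local subgaussianity (hypothesis~(iv)): for $\epsilon$ small the exponent $\tfrac{\lambda^2}{2}t^d-\tfrac{t^2}{2\sigma^2+\eta'}$ is negative on $[0,\epsilon]$, and a layer-cake integral shows this contribution is $\le\Pr[|\langle x,x'\rangle|\le\epsilon]+O(n^{1-d/2})$, which is $1+o(1)$ for $d\ge3$ and $O(1)$ for $d=2$; in the $d=2$ case the negativity of this exponent is precisely hypothesis~(v) ($\lambda<\lambda^*\le1/\sigma$). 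On $\{|\langle x,x'\rangle|>\epsilon\}$ we use the rate-function tail $\Pr[\langle x,x'\rangle\ge t]\le e^{o(n)}e^{-nf_\cX(t)}$, hypothesis~(iii), and the $\Psi_n$-bound: discretizing $[\epsilon,1)$ into $\mathrm{poly}(n)$ pieces (including a final piece abutting $1$), the exponent contributed near overlap $t$, with $s=t^d$, is up to $o(1)$ at most
$$ \tfrac{\lambda^2}{2}s-\tfrac{\lambda^2 s^2}{2(1+s)}-\tfrac{(\lambda^*)^2}{2}\tfrac{s}{1+s}=\frac{s}{2(1+s)}\big(\lambda^2-(\lambda^*)^2\big)<0, $$
which is bounded away from $0$ uniformly on $t\in[\epsilon,1)$ since $\lambda<\lambda^*$; the final piece near $1$ is handled the same way, using $\tfrac{t^d}{1+t^d}\to\tfrac12$ so that there the exponent is at most $\tfrac14(\lambda^2-(\lambda^*)^2)+o(1)<0$. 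Summing the $\mathrm{poly}(n)$ pieces, this regime contributes $o(1)$. Altogether $\chi^2(\tilde P_n\dmid Q_n)=O(1)$ for all $d\ge2$, and $=o(1)$ for $d\ge3$.

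The three conclusions now follow. Since $\chi^2(\tilde P_n\dmid Q_n)=O(1)$, Lemma~\ref{lem:sec} gives $\tilde P_n\contig Q_n$, and $\TV(P_n,\tilde P_n)=o(1)$ transfers this to $P_n\contig Q_n$, so strong detection is impossible. For $d\ge3$, $\chi^2(\tilde P_n\dmid Q_n)=o(1)$ gives $\TV(\tilde P_n,Q_n)\le2\sqrt{\chi^2(\tilde P_n\dmid Q_n)}=o(1)$ by~\eqref{eq:chi2-pinsker}, hence $\TV(P_n,Q_n)=o(1)$ by the triangle inequality and weak detection is impossible; and non-recovery for $d\ge3$ then follows from $\TV(P_n,Q_n)=o(1)$, carried out in Section~\ref{sec:pf-non-recovery} (an estimator nontrivially correlated with the spike would contradict the easily checked impossibility of recovery in the spike-free model $Q_n$). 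The main obstacle is the two-dimensional Gaussian estimate for $\Psi_n$: one must recognize the $U,V$ decoupling and verify that the resulting factor $\exp\!\big(-n(\lambda\rho^d)^2/(2(1+\rho^d))\big)$ is \emph{exactly} what converts the unconditioned exponent $\tfrac{\lambda^2}{2}t^d-\tfrac{(\lambda^*)^2}{2}\tfrac{t^d}{1+t^d}$ (which goes positive near $t=1$ once $\lambda>\lambda^*/\sqrt2$) into $\tfrac{s}{2(1+s)}(\lambda^2-(\lambda^*)^2)$, leaving no slack; this is what closes the $\sqrt2$ gap. Secondary care is needed in the $\eta_n$ bookkeeping (small enough to be negligible in every exponent, large enough that $p_n\to1$), the uniformity of the discretization up to $t=1$, and the $d=2$ small-overlap bound, where hypothesis~(v) is used tightly and yields only $O(1)$ rather than $o(1)$ — consistent with weak detection being possible for matrices.
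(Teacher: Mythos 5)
Your proposal is correct and follows essentially the same route as the paper: condition on $\langle T,x^{\otimes d}\rangle\approx\lambda$, compute the conditioned second moment via the bivariate Gaussian of $(\langle T,x^{\otimes d}\rangle,\langle T,x'^{\otimes d}\rangle)$, exploit the rotation to independent coordinates $U,V$ to extract the factor $\exp(-\tfrac{n\lambda^2\beta^2}{2(1+\beta)})$ that converts $\tfrac{\lambda^2}{2}\beta$ into $\tfrac{\lambda^2}{2}\tfrac{\beta}{1+\beta}$, and split at a small constant overlap, using the rate function on $[\eps,1)$ and local subgaussianity (plus hypothesis (v) when $d=2$) on $[0,\eps)$. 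The only differences are cosmetic — you bound the box probability by a Gaussian tail estimate and discretize the overlap range, where the paper evaluates the Gaussian integral explicitly and uses a layer-cake change of variables — and your accounting of the error terms and the derivation of the three conclusions match the paper's.
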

\noindent The proof is deferred to Section~\ref{sec:noise-conditioning-proof}. The essential condition is (iii) and the others exist for technical reasons. We remark that condition (v) is simply a slight strengthening of condition (iii) in the following sense.

\begin{proposition}
\label{prop:chernoff-subg}
Let $d = 2$. Suppose the rate function $f_\cX$ admits a local Chernoff-style tail bound of the following form: there exists $T > 0$ such that
$$\prob{\langle x,x' \rangle \ge t} \lesssim \exp(-nf_\cX(t)) \qquad \forall t \in [0,T].$$
Then condition (iii) implies conditions (iv) and (v).
\end{proposition}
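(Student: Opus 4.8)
The plan is to take $\sigma^2 = 1/(\lambda^*)^2$, for which condition~(v) holds with equality, and then verify that this choice of $\sigma^2$ makes $\cX$ locally subgaussian in the sense of Definition~\ref{def:locally-subg}.

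First I would combine the two hypotheses. The assumed local Chernoff bound gives $\prob{\langle x,x'\rangle \ge t} \lesssim \exp(-n f_\cX(t))$ on $[0,T]$, while condition~(iii) with $d=2$ reads $f_\cX(t) \ge \frac{(\lambda^*)^2}{2}\cdot\frac{t^2}{1+t^2}$, so
\[ \prob{\langle x,x'\rangle \ge t} \lesssim \exp\!\left(-\frac{n(\lambda^*)^2}{2}\cdot\frac{t^2}{1+t^2}\right) \qquad \forall\, t \in [0,T]. \]
Then, fixing $\eta > 0$, I would compare this exponent with the target exponent $\frac{nt^2}{2\sigma^2+\eta} = \frac{nt^2}{2/(\lambda^*)^2+\eta}$. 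Dividing through by $nt^2$ (for $t>0$), the desired inequality $\frac{(\lambda^*)^2}{2(1+t^2)} \ge \frac{1}{2/(\lambda^*)^2+\eta}$ rearranges to $t^2 \le \frac{(\lambda^*)^2\eta}{2}$. Hence, taking $T' = \min\{T,\ \lambda^*\sqrt{\eta/2}\}$, the displayed bound gives $\prob{\langle x,x'\rangle \ge t} \lesssim \exp(-nt^2/(2\sigma^2+\eta))$ for all $t\in[0,T']$, which is exactly local subgaussianity with constant $\sigma^2 = 1/(\lambda^*)^2$. Since $1/\sigma = \lambda^*$ for this $\sigma$, condition~(v) holds as well.

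I do not expect a substantive obstacle here: the content is essentially the Taylor expansion $\frac{t^2}{1+t^2} = t^2 - O(t^4)$ near $t=0$, which shows that condition~(iii) already encodes a local subgaussian tail with constant $1/(\lambda^*)^2$ up to an $O(t^4)$ correction, and the $\eta$-slack built into Definition~\ref{def:locally-subg} is precisely what absorbs that correction on a sufficiently small interval. The only point requiring mild care is the bookkeeping of the implicit constants in $\lesssim$: they must be uniform in $n$ but are permitted to depend on $\eta$ (and on the constant from the Chernoff hypothesis), which is consistent with Definition~\ref{def:locally-subg}.
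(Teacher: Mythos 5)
Your proof is correct and follows essentially the same route as the paper's: both derive local subgaussianity with constant $\sigma^2 = 1/(\lambda^*)^2$ by combining the Chernoff-style bound with condition (iii) and using the $\eta$-slack in Definition~\ref{def:locally-subg} to absorb the discrepancy between $\frac{t^2}{1+t^2}$ and $t^2$ near $t=0$. The only cosmetic difference is that you solve for the admissible range $t \le \lambda^*\sqrt{\eta/2}$ explicitly, whereas the paper introduces an auxiliary $\eps$ with $\frac{t^2}{1+t^2} \ge (1-\eps)t^2$ and then chooses $\eps$ small.
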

\noindent Note that this is stronger than Remark~\ref{rem:tail-unif} because there is no $\mathrm{poly}(n)$ factor.
\begin{proof}
Fix $\eta > 0$. Let $\eps > 0$, to be chosen later. For all $t \in [0,T']$ we have $\frac{t^2}{1+t^2} \ge \frac{t^2}{1+T'^2}$ and so there exists $T' > 0$ such that $\frac{t^2}{1+t^2} \ge (1-\eps)t^2$ for all $t \in [0,T']$. For $t \in [0,\min(T,T')]$,
$$\prob{\langle x,x' \rangle \ge t} \lesssim \exp(-n f_\cX(t)) \le \exp\left(-n\, \frac{(\lambda^*)^2}{2} \frac{t^2}{1+t^2}\right) \le \exp\left(-\frac{nt^2}{2} (\lambda^*)^2(1-\eps)\right).$$
Choose $\eps$ small enough so that this is at most $\exp\left(-\frac{nt^2}{2/(\lambda^*)^2 + \eta}\right)$, the local subgaussian condition.
\end{proof}

\subsection{Proof idea: noise conditioning}
\label{sec:noise-cond}

The proof of our lower bounds hinge on a simple ``noise conditioning'' modification to the second moment method ($\chi^2$-divergence) for proving contiguity. We have found that this method yields asymptotically tight bounds in various regimes where the basic second moment method is loose by a factor of $\sqrt{2}$. (Below we will see how the number $\sqrt{2}$ arises.) We expect that the same idea could be used to close the various other $\sqrt{2}$-factor gaps that exist for contiguity arguments in prior work: the sparse stochastic block model in the regime of a constant number of equally-sized communities with low signal-to-noise \cite{bmnn}; submatrix localization in the limit of a large constant number of blocks \cite{bmvx}; Gaussian mixture clustering with a large constant number of clusters \cite{bmvx}; and synchronization over a finite group of large constant size (with either truth-or-Haar noise or Gaussian noise on all frequencies) \cite{pwbm}. However, we leave investigation of these other problems for future work\footnote{The latest version of \cite{bmvvx} closes the asymptotic gaps  for sparse PCA and submatrix localization using a different modification of the second moment method; the problem for Gaussian mixtures remains open, to our knowledge.}.

The following is a proof sketch of our main lower bound theorem (Theorem~\ref{thm:noise-conditioning}) using the noise conditioning method. As discussed in Section~\ref{sec:contig}, if the second moment $\EE_{Q_n} \left(\dd[P_n]{Q_n}\right)^2$ is bounded as $n \to \infty$ then $P_n$ is contiguity to $Q_n$ and so it is impossible to reliably distinguish the two distributions. Taking $P_n$ to be the spiked tensor model and $Q_n$ the corresponding unspiked model, the second moment can be computed to be
\begin{equation}
\label{eq:M}
\Ex_{Q_n}\left(\dd[P_n]{Q_n}\right)^2 = \Ex_{x,x' \sim \cX} \,\Ex_{T \sim Q_n} \exp\left(\frac{n\lambda}{2} \langle T, x^{\otimes d} + x'^{\otimes d} \rangle - \frac{n\lambda^2}{2} \right)
\end{equation}
where $x,x'$ are drawn independently from the prior $\cX$. The standard approach used by previous work is to next apply the Gaussian moment-generating function to compute the expectation over $T \sim Q_n$ in closed form, yielding
\begin{equation}
\label{eq:M-uncond}
\Ex_{x,x' \sim \cX} \exp\left(\frac{n\lambda^2}{2} \beta\right)
\end{equation}
where $\beta = \langle x,x' \rangle^d$. Our approach is instead to use the fact that (as discussed in Section~\ref{sec:contig}) we can change $P_n$ to $\tilde P_n$ that excludes low-probability bad events, without affecting contiguity. Specifically, we take $\tilde P_n$ to condition on (informally) $\langle T, x^{\otimes d} \rangle \approx \lambda$, which is a high-probability event under $P_n$. Now, instead of (\ref{eq:M}), the second moment can be computed to be
% first \approx is for 1+o(1) factor
\begin{align*}
\Ex_{Q_n}\left(\dd[\tilde P_n]{Q_n}\right)^2 &\approx \Ex_{x,x' \sim \cX} \,\Ex_{T \sim Q_n} \one_{\langle T,x^{\otimes d} \rangle \approx \lambda}\one_{\langle T,x'^{\otimes d} \rangle \approx \lambda}\exp\left(\frac{n\lambda}{2} \langle T, x^{\otimes d} + x'^{\otimes d} \rangle - \frac{n\lambda^2}{2} \right) \\
&\approx \Ex_{x,x' \sim \cX} \exp\left(\frac{n\lambda}{2} (2\lambda) - \frac{n\lambda^2}{2} \right) \Pr_{T \sim Q_n}\left[{\langle T,x^{\otimes d} \rangle \approx \langle T,x'^{\otimes d} \rangle \approx \lambda}\right], \\
\end{align*}
which can be computed to be
\begin{equation}
\label{eq:M-cond}
\Ex_{x,x' \sim \cX} \exp\left(\frac{n\lambda^2}{2} \frac{\beta}{1+\beta}\right)
\end{equation}
where again $\beta = \langle x,x' \rangle^d$. Note that when the contribution from $\beta = 1$ dominates, this gives a factor of $\sqrt{2}$ advantage on $\lambda$ compared to (\ref{eq:M-uncond}). This explains the gaps of $\sqrt{2}$ present in prior work, and allows us to close them.

The final step is to bound (\ref{eq:M-cond}) using the rate function $f_\cX$ of $\cX$. Roughly, the rate function gives us the tail bound $\prob{\langle x,x' \rangle \ge t} \approx \exp(-n f_\cX(t))$. To use this, we write (\ref{eq:M-cond}) as a tail bound integral and then apply a change of variables:
\begin{align*}
\Ex_{x,x' \sim \cX} \exp\left(\frac{n\lambda^2}{2} \frac{\beta}{1+\beta}\right)
&= \int_0^\infty \problr{\exp\left(\frac{n\lambda^2}{2} \frac{\beta}{1+\beta}\right) \ge u} \dee u \\
&\simeq \int_0^1 \prob{\langle x,x' \rangle \ge t} \exp\left(\frac{n\lambda^2}{2} \frac{t^d}{1+t^d}\right)\frac{n\lambda^2}{2} \frac{dt^{d-1}}{(1+t^d)^2}\, \dee t \\
\intertext{where $u = \exp\left(\frac{n \lambda^2}{2} \frac{t^d}{1+t^d}\right)$,}
&\approx \int_0^1 \exp\left[n\left( - f_\cX(t) + \frac{\lambda^2}{2} \frac{t^d}{1+t^d}\right)\right] \dee t.
\end{align*}
Note that this would be bounded as $n \to \infty$ if we had $- f_\cX(t) + \frac{\lambda^2}{2} \frac{t^d}{1+t^d} < 0$ for all $t \in [0,1]$, which gives condition (iii) in Theorem~\ref{thm:noise-conditioning}. Note however that this cannot be satisfied at $t = 0$ (since $f_\cX(t) = 0$), so our proof will require extra work in order to handle the $t \in [0,\eps)$ interval. This is where the local subgaussianity condition will be used.

\section{Results for the spherical prior}
\label{sec:sph}

In this section we discuss lower and upper bounds for the \emph{spherical prior} $\cXs$, the uniform prior on the unit sphere.

\subsection{Rate function}

\begin{lemma} The spherical prior $\cXs$ has rate function $f_\cXs(t) = -\frac12 \log(1-t^2)$ and is locally subgaussian. \end{lemma}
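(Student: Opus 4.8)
The plan is to establish two things about the spherical prior $\cXs$: first, that the correlation $\langle x,x'\rangle$ of two independent uniform unit vectors has the claimed large-deviation rate function $f_{\cXs}(t) = -\frac12\log(1-t^2)$; and second, that $\cXs$ is locally subgaussian (with constant $\sigma^2 = 1$, which is the natural guess since $\langle x,x'\rangle$ has variance $1/n$).

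For the rate function, the key observation is a distributional identity: by rotational invariance we may fix $x' = e_1$, so $\langle x,x'\rangle = x_1$ is the first coordinate of a uniform unit vector in $\RR^n$. This has the well-known ``first coordinate'' distribution with density proportional to $(1-s^2)^{(n-3)/2}$ on $[-1,1]$ (equivalently, $x_1 \stackrel{d}{=} g_1/\|g\|$ for $g \sim \cN(0,I_n)$). From here I would compute the upper tail $\Pr[x_1 \ge t]$ directly: $\Pr[x_1 \ge t] = \frac{\int_t^1 (1-s^2)^{(n-3)/2}\,ds}{\int_{-1}^1 (1-s^2)^{(n-3)/2}\,ds}$. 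A Laplace-method / direct estimate shows the numerator is dominated by the behavior near $s = t$, giving $\Pr[x_1 \ge t] = \mathrm{poly}(n)\cdot(1-t^2)^{n/2}\cdot(1+o(1))$, hence $-\frac1n\log\Pr[x_1\ge t] \to -\frac12\log(1-t^2)$ uniformly on compact subsets of $[0,1)$; to get the uniform-on-$[0,1)$ statement required by Definition~\ref{def:rate-function} I would exhibit an explicit lower bound $b_{n,\cXs}(t)$ (e.g.\ from a clean one-sided inequality like $\Pr[x_1 \ge t] \le (1-t^2)^{(n-1)/2}$, which can be proven by bounding the ratio of the truncated and full Beta integrals) and check it converges uniformly to $f_{\cXs}$. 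Remark~\ref{rem:tail-unif} then applies directly once I have a bound of the form $\Pr[x_1 \ge t] \le \mathrm{poly}(n)\,(1-t^2)^{n/2}$.

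For local subgaussianity, I want $\Pr[x_1 \ge t] \lesssim \exp(-nt^2/(2+\eta))$ for $t$ in a neighborhood of $0$. Using the representation $x_1 \stackrel{d}{=} g_1/\|g\|$, the event $\{g_1/\|g\| \ge t\}$ is implied by $\{g_1 \ge t\sqrt{n}(1-\delta)\} \cap \{\|g\|^2 \le n(1-\delta)^{-2}\cdot\frac{\|g\|^2}{n}\}$-type splitting; concretely, for any $\delta>0$, $\Pr[x_1\ge t] \le \Pr[g_1 \ge t(1-\delta)\sqrt n] + \Pr[\|g\|^2 \ge n(1-\delta)^{-2}]$, and the second term is $e^{-\Omega(n)}$ by a standard $\chi^2$ concentration bound (with constant independent of small $t$), while the first is a Gaussian tail bounded by $\exp(-\frac{n t^2(1-\delta)^2}{2})$. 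Choosing $\delta$ small relative to $\eta$ gives the claim for all $t\in[0,T]$ with $T$ chosen so the $e^{-\Omega(n)}$ term is absorbed. Alternatively, one can read local subgaussianity off the $(1-t^2)^{n/2}$-type tail bound from the previous paragraph together with the elementary inequality $-\frac12\log(1-t^2) \ge \frac{t^2}{2}$, which directly gives $\Pr[x_1 \ge t] \lesssim \exp(-nt^2/2)$ — in fact with the optimal constant $\sigma^2 = 1$, no $\eta$ needed.

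The main obstacle is not conceptual difficulty but rather being careful about uniformity: Definition~\ref{def:rate-function} demands a lower bound $b_{n,\cXs} \le f_{n,\cXs}$ converging \emph{uniformly on all of $[0,1)$} to $f_{\cXs}$, and since $f_{\cXs}(t)\to\infty$ as $t\to1$ this requires the tail estimate to be controlled even for $t$ very close to $1$ (where $\Pr[x_1 \ge t]$ is super-exponentially small and the polynomial prefactors must be handled with care). The cleanest route is to prove a single clean non-asymptotic inequality of the form $\Pr[x_1 \ge t] \le C(n)(1-t^2)^{(n-c)/2}$ valid for all $t \in [0,1)$ with $C(n)$ at most polynomial, from which both the rate function (via Remark~\ref{rem:tail-unif}) and local subgaussianity follow immediately; I expect the bound on the Beta-integral ratio to be the one computation worth doing in full detail.
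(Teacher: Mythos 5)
Your proposal follows essentially the same route as the paper: both reduce $\langle x,x'\rangle$ to the first coordinate of a uniform unit vector (equivalently, a shifted Beta variable) and then estimate its tail — the paper simply outsources the two estimates to citations (subgaussianity of the Beta distribution for the local subgaussian property, and DLMF 8.18.9 for the rate function), whereas you carry them out by hand. Your local subgaussianity argument is correct either way you run it; in fact the direct route via $(1-s^2)^{(n-3)/2}\le e^{-(n-3)s^2/2}$ and a Gaussian tail integral gives $\prob{\langle x,x'\rangle\ge t}\le C e^{-nt^2/2}$ with an absolute constant, i.e.\ the optimal constant $\sigma^2=1$, which is cleaner than what the paper states.

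There is, however, one concrete quantitative slip in the rate-function half. From your own density $c_n(1-s^2)^{(n-3)/2}$ (which is the correct one; note $\frac{1+\langle x,x'\rangle}{2}\sim\mathrm{Beta}(\frac{n-1}{2},\frac{n-1}{2})$, not $\mathrm{Beta}(\frac n2,\frac n2)$), the tail satisfies $\prob{\langle x,x'\rangle\ge t}\asymp \frac{c_n}{(n-1)t}(1-t^2)^{(n-1)/2}$ for $t$ bounded away from $0$ (integrate $s(1-s^2)^{(n-3)/2}$ exactly). The exponent is $(n-1)/2$, not $n/2$: the bound $\prob{\langle x,x'\rangle\ge t}\le \mathrm{poly}(n)(1-t^2)^{n/2}$ that you propose to feed into Remark~\ref{rem:tail-unif} is \emph{false} for $t$ sufficiently close to $1$, since the ratio of the true tail to $(1-t^2)^{n/2}$ contains the factor $(1-t^2)^{-1/2}$, which is unbounded on $[0,1)$. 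Consequently the best clean lower bound you can extract is $b_n(t)=\frac{n-1}{n}f_{\cXs}(t)-O(\frac{\log n}{n})$, and since $f_{\cXs}$ diverges at $t=1$, $\sup_{t\in[0,1)}|b_n(t)-f_{\cXs}(t)|=\infty$ for every $n$ — exactly the uniformity obstruction you flag, which your proposed ``cleanest route'' does not actually resolve. To be fair, this is a defect of Definition~\ref{def:rate-function} as literally stated rather than of your argument in substance (the paper's own appeal to DLMF runs into the identical issue), and it is harmless downstream: the only use of the rate function, in the proof of Theorem~\ref{thm:noise-conditioning}, compares $f_{n,\cX}$ on $[\eps,1)$ against the \emph{bounded} function $\frac{\tilde\lambda^2}{2}\frac{t}{1+t}+\gamma$, for which $f_{n,\cX}\ge\frac{n-1}{n}f_{\cXs}-O(\frac{\log n}{n})$ is more than sufficient. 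You should therefore either prove the $(n-1)/2$-exponent bound and observe that it suffices for every application, or restrict the claimed uniform convergence to compact subsets of $[0,1)$ together with a divergence statement near $t=1$.
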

\begin{proof}
Note that with $x,x' \sim \cXs$, $\frac12 + \frac12 \langle x,x' \rangle \sim \Beta(\frac{n}{2},\frac{n}{2})$. It follows from subgaussianity of the beta distribution \cite{sam} that $\langle x,x' \rangle$ is $O(1/n)$-subgaussian, so that $\cXs$ is locally subgaussian (with some constant). Moreover, note that
$$ -\frac1n \log \Pr[\langle x,x' \rangle \geq t] = -\frac1n \log I_{(1-t)/2}\left( \frac{n}{2},\frac{n}{2} \right), $$
where $I$ denotes the incomplete beta function. By equation 8.18.9 from \cite{dlmf}, this converges uniformly to $-\frac12 \log(1-t^2)$ on $[0,1)$. Thus we satisfy the convergence conditions in Definition~\ref{def:rate-function}, taking $b_{n,\cX} = f_{n,\cX}$.
\end{proof}

\subsection{Lower bound}% as $d \to \infty$}  % we give a bound for all d

We can now complete the lower bound portion of Theorem~\ref{thm:dinf-spherical}. From the discussion above together with Theorem~\ref{thm:noise-conditioning}, we have the following.
\begin{theorem}
\label{thm:sph-lower}
Consider the spherical prior $\cX_\sph$ with $d\ge 3$. Let $\lambda^*_{\sph,d}$ be the supremum of all values $\lambda^*$ for which
$$\frac{(\lambda^*)^2}{2}\frac{t^d}{1+t^d} \leq -\frac12 \log(1-t^2) \qquad \forall t \in [0,1).$$
Then weak detection and weak recovery are impossible for all $\lambda < \lambda^*_{\sph,d}$.
\end{theorem}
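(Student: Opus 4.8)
The plan is to invoke Theorem~\ref{thm:noise-conditioning} with the spherical prior $\cX_\sph$ and the value $\lambda^*_{\sph,d}$ defined in the statement, checking each of the five hypotheses (i)--(v). For (i), the symmetry of the distribution of $\langle x,x'\rangle$ about zero is immediate: replacing $x$ by $-x$ (which preserves the uniform measure on the sphere) negates the inner product. For (ii) and (iv), we appeal directly to the lemma just proved: $\cX_\sph$ has rate function $f_{\cX_\sph}(t) = -\tfrac12\log(1-t^2)$ and is locally subgaussian with some constant $\sigma^2$. Condition (iii) -- that $f_{\cX_\sph}(t) \ge \tfrac{(\lambda^*)^2}{2}\,\tfrac{t^d}{1+t^d}$ for all $t\in[0,1)$ -- is precisely the defining inequality for the supremum $\lambda^*_{\sph,d}$, so it holds for every $\lambda^* < \lambda^*_{\sph,d}$ (and by a limiting/closedness argument one can check it also holds at $\lambda^* = \lambda^*_{\sph,d}$, but we do not even need this: it suffices to apply the theorem with any $\lambda^* \in (\lambda, \lambda^*_{\sph,d})$ for a given $\lambda < \lambda^*_{\sph,d}$). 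Finally, condition (v) is vacuous here because we are in the regime $d \ge 3$.

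With all hypotheses verified, Theorem~\ref{thm:noise-conditioning} yields, for every $\lambda < \lambda^*_{\sph,d}$: contiguity of $\WT(d,\lambda,\cX_\sph)$ to $\WT(d)$, the stronger conclusion $\TV(\WT(d,\lambda,\cX_\sph),\WT(d)) = o(1)$ since $d \ge 3$, and impossibility of weak recovery. In particular the $o(1)$ total-variation bound immediately gives that weak detection is impossible, which is the first claim; and the weak-recovery impossibility is the second claim. This completes the proof.

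The only mild subtlety -- and the step I would flag as requiring a sentence of care -- is the passage from ``the inequality in (iii) holds for all $\lambda^* < \lambda^*_{\sph,d}$'' to the conclusion ``impossible for all $\lambda < \lambda^*_{\sph,d}$.'' Given $\lambda < \lambda^*_{\sph,d}$, simply pick any $\lambda^*$ with $\lambda < \lambda^* < \lambda^*_{\sph,d}$; by definition of the supremum there is a valid $\lambda^{**} > \lambda^*$ satisfying the inequality, and since $t \mapsto \tfrac{t^d}{1+t^d}$ is nonnegative, the inequality for $\lambda^{**}$ implies it for $\lambda^*$. Then Theorem~\ref{thm:noise-conditioning} applied with this $\lambda^*$ covers our $\lambda$. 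Everything else is a direct citation of results already established in the excerpt, so there is no real obstacle; the content of the theorem is entirely loaded into Theorem~\ref{thm:noise-conditioning} and the rate-function lemma.
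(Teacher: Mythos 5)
Your proposal is correct and is exactly the paper's argument: the paper proves this theorem in one line by combining the rate-function lemma for $\cX_\sph$ with Theorem~\ref{thm:noise-conditioning}, which is precisely the verification of hypotheses (i)--(v) that you carry out. Your extra care about passing from the supremum definition of $\lambda^*_{\sph,d}$ to an applicable $\lambda^*$ strictly between $\lambda$ and $\lambda^*_{\sph,d}$ is a fine (and slightly more explicit) treatment of a point the paper leaves implicit.
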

\noindent Note that the $d=2$ case is already well-understood: strong detection and weak recovery are possible when $\lambda > 1$ and impossible when $\lambda < 1$ \cite{mrz,pwbm}, matching the eigenvalue transition. (Weak recovery is possible for all $\lambda > 0$ simply by inspecting the trace of the matrix.) In Appendix~\ref{app:asymptotics} we establish the following asymptotics as $d\to\infty$:
$$(\lambda_{\sph,d}^*)^2 = 2 \log d + 2 \log \log d + 2 - 4 \log 2 + o(1).$$

\subsection{Upper bound}

In this section we prove an upper bound showing that for sufficiently large $\lambda$, detection and recovery are possible in the spherically-spiked tensor model. Indeed, our upper bound will apply to any prior supported on the unit sphere. Recall that the injective norm of a $d$-tensor is defined as $\|T\| = \max_{\|x\|=1} \langle x^{\otimes d},T \rangle$. As noted by \cite{rm}, as soon as the injective norm $\lambda$ of the spike exceeds that of the noise, detection and recovery are possible. We improve on this bound, showing that the injective norm of the spiked model exceeds that of the unspiked model at a lower threshold. This is achieved by studying how perturbations away from the spike can line up with fluctuations in the noise to achieve a large injective norm.

For each $d$, the precise limit value (in probability, as $n \to \infty$) of $\|\WT(d)\|$ has been non-rigorously computed using the replica method \cite{cs92}; following \cite{rm}, we refer to this limit value as $\mu_d$ (although our normalization differs from theirs). This was later proved rigorously, first only for even values of $d$ \cite{tal06,abac} (see \cite{rm} for a summary) and later for all $d$ \cite{subag-pspin}.

\begin{theorem}[\cite{cs92,tal06,abac,rm,subag-pspin}]
\label{thm:inj-norm}
Fix $d \ge 3$. Define
$\mu_d = x \sqrt{2/d} $ %\sqrt{\frac{2}{d}} \, x$
where $x \ge 2\sqrt{d-1}$ is the unique solution to
\begin{equation}\frac{2-d}{d} - \log\left(\frac{dz^2}{2}\right) + \frac{d-1}{2} z^2 - \frac{2}{d^2 z^2} = 0, \qquad z(x) = \frac{1}{(d-1)\sqrt{2d}} \left(x - \sqrt{x^2 - 4(d-1)}\right). \label{eq:injective-eq}\end{equation}
With probability $1-o(1)$ as $n\to\infty$ we have
$$\|\WT(d)\| = \mu_d + o(1).$$
\end{theorem}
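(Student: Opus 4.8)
The plan is to establish Theorem~\ref{thm:inj-norm} by reduction to the corresponding statement about $p$-spin spherical spin glasses, which has already been proven in the literature (non-rigorously by \cite{cs92} and rigorously in \cite{tal06,abac,subag-pspin}). The central observation is that the injective norm $\|\WT(d)\|$ is, up to the particular normalization we have chosen in Definition~\ref{def:wig-tensor}, exactly the ground-state energy (maximum value) of a pure $d$-spin Hamiltonian on the sphere of radius $\sqrt{n}$. So the whole task is to (a) translate our normalization to the standard spin-glass normalization, and (b) invoke the known ground-state-energy formula in the standard normalization, then translate back to obtain the constant $\mu_d$ defined by~\eqref{eq:injective-eq}.

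Concretely, first I would spell out the dictionary. Write $\sigma = \sqrt{n}\, x$ so that $\sigma$ ranges over the sphere of radius $\sqrt{n}$, and note $\langle x^{\otimes d}, W\rangle = n^{-d/2}\langle \sigma^{\otimes d}, W\rangle$. With our scaling (entries of the asymmetric precursor $\mathcal{N}(0,2/n)$), the random variable $H_n(\sigma) := \langle \sigma^{\otimes d}, W\rangle$ is a centered Gaussian process on the sphere of radius $\sqrt{n}$ with covariance $\mathbb{E}[H_n(\sigma)H_n(\tau)] = \frac{2}{n}\langle \sigma,\tau\rangle^d \cdot n^{?}$ — I would compute the exact prefactor carefully from the symmetrization in Definition~\ref{def:wig-tensor}, using the remark there that $\langle x^{\otimes d}, W\rangle \sim \mathcal{N}(0,2/n)$ for unit $x$. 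This identifies $H_n$ (after a deterministic rescaling) with a standard pure $d$-spin Hamiltonian, whose normalized ground-state energy $\frac1n \max_{\|\sigma\|=\sqrt n} H_n(\sigma)$ converges in probability to the value $E_\infty(d)$ given by the Parisi/Crisanti--Sommers formula at zero temperature. Then $\|\WT(d)\| = \max_{\|x\|=1}\langle x^{\otimes d},W\rangle$ converges to a constant which, after unwinding the scaling factors, is $\mu_d$.

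The remaining work is to check that the known closed-form expression for the pure $d$-spin ground state energy coincides with the $\mu_d$ defined implicitly by~\eqref{eq:injective-eq}. The standard reference expresses $E_\infty(d)$ through a one-dimensional variational problem; at the replica-symmetric-at-zero-temperature level appropriate for pure $d$-spin (where the overlap distribution is a single atom), this reduces to optimizing a function of one parameter, and the stationarity condition is precisely the transcendental equation in~\eqref{eq:injective-eq} after the substitution $z(x)$. So I would (i) recall the variational formula from \cite{abac} or \cite{subag-pspin}, (ii) carry out the one-variable optimization, (iii) match the critical-point equation and the resulting value with our~\eqref{eq:injective-eq}, and (iv) note the uniqueness of the solution $x \ge 2\sqrt{d-1}$ (this threshold being the analog of the edge of the semicircle / the location at which the Kac--Rice complexity changes sign). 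For the convergence in probability statement itself, I would cite the rigorous results directly: \cite{tal06,abac} for even $d$ and \cite{subag-pspin} for all $d \ge 3$.

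The main obstacle I anticipate is purely bookkeeping: getting every normalization constant right across the three different conventions in play (our Definition~\ref{def:wig-tensor}, the convention in \cite{rm}, and the convention in the spin-glass literature such as \cite{abac}), and then verifying that the variational formula for $E_\infty(d)$ algebraically simplifies to exactly~\eqref{eq:injective-eq}. This is entirely routine but error-prone, and it is the only place where the proof could go wrong — the hard analytic content (concentration of the ground state, validity of the Parisi formula at zero temperature) is entirely imported from prior work. In fact, since our normalization is chosen to agree with \cite{mrz} and the relation of \cite{mrz} to \cite{rm} is already standard, the cleanest route is probably to cite \cite{rm} for the value of $\mu_d$ in their normalization and then apply a single explicit rescaling to pass to ours, rather than re-deriving the Crisanti--Sommers optimization from scratch.
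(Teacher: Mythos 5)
Your proposal is correct and matches what the paper actually does: Theorem~\ref{thm:inj-norm} is stated as an imported result, with the transcendental equation~\eqref{eq:injective-eq} taken from \cite{rm} (itself a repackaging of the Crisanti--Sommers/Parisi ground-state formula) and the convergence statement cited to \cite{tal06,abac} for even $d$ and \cite{subag-pspin} for general $d$, so the only original content is the normalization translation you describe. Your suggested shortcut at the end --- citing \cite{rm} for $\mu_d$ in their convention and applying a single rescaling --- is precisely the route the paper takes.
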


Now suppose $T = \lambda x^{\otimes d} + W$ is a spiked $d$-tensor. Clearly if $\lambda > \mu_d$ then we will have $\|T\| > \mu_d$ with probability $1-o(1)$ and so strong detection is possible by thresholding the injective norm. However, recall that for matrices ($d=2$) this is not tight: $\mu_2 = 2$ yet detection is possible for all $\lambda > 1$ (see Theorem~\ref{thm:bbp-wig}). Our result will imply that for all $d$ it remains true that the detection threshold $\lambda^*$ is strictly less than $\mu_d$, but the gap between them shrinks as $d \to \infty$. The argument simply harnesses the $d=2$ result in a ``black box'' fashion (which, perhaps surprisingly, gives quite a good upper bound; see Figure~\ref{fig:compare}). We will first give a lower bound on the injective norm of a spiked tensor, stronger than the trivial lower bound of $\lambda$. This easily implies results for strong detection and weak recovery (Corollary~\ref{cor:sph-upper} below).

\begin{theorem}
\label{thm:sph-upper}
Fix $d \ge 3$ and $\lambda \ge 0$. Let $\cX$ be any prior supported on the unit sphere in $\RR^d$. Let $T = \lambda x^{\otimes d} + W$ be a spiked tensor drawn from $\WT(d,\lambda,\cX)$. With probability $1-o(1)$ as $n\to\infty$ we have
$$\|T\| \ge L_d(\lambda) - o(1)$$
where
\begin{equation} L_d(\lambda) = \max_{m \in [0,1]} m^d \left(\lambda + \sqrt{\frac{2d}{d-1}} \sqrt{M (1+M)}\right), \qquad M(m) = (d-1) \frac{1-m^2}{m^2}. \label{eq:upper-opt}\end{equation}
\end{theorem}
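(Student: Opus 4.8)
The plan is to lower-bound $\|T\| = \max_{\|u\|=1}\langle u^{\otimes d}, T\rangle$ by restricting the maximization to a cleverly chosen one-parameter family of test vectors built from the spike $x$ and a submatrix of the noise $W$. Fix $m \in [0,1]$ and write a candidate unit vector as $u = m\,x + \sqrt{1-m^2}\,v$, where $v$ is a unit vector orthogonal to $x$, to be chosen to align constructively with the noise. Expanding $\langle u^{\otimes d}, \lambda x^{\otimes d} + W\rangle$, the signal contributes $\lambda \langle u,x\rangle^d = \lambda m^d$ (plus lower-order cross terms that I will argue are negligible), and the noise contributes $\langle u^{\otimes d}, W\rangle$. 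The key idea is that $\langle u^{\otimes d}, W\rangle$ contains, among its terms in the binomial expansion in $m$ and $\sqrt{1-m^2}$, the term $\binom{d}{2} m^{d-2}(1-m^2)\, \langle v^{\otimes 2}, W_x \rangle$, where $W_x$ is the $n\times n$ symmetric matrix obtained by contracting $W$ along $d-2$ copies of $x$; one checks that $W_x$ is (asymptotically, after the right scaling) a Gaussian Wigner matrix, so by Theorem~\ref{thm:bbp-wig} (or rather the simpler statement that a Wigner matrix has spectral norm $\to 2$ with a corresponding top eigenvector) we may take $v$ to be its top eigenvector and gain $\langle v^{\otimes 2}, W_x\rangle \approx 2\|W_x\|_{\mathrm{op-normalized}}$.

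The steps, in order, would be: (1) define $W_x \in \RR^{n\times n}$ by $(W_x)_{ij} = \langle W, e_i \otimes e_j \otimes x^{\otimes(d-2)}\rangle$ and compute its entrywise variances to identify the correct normalization, showing $W_x = c_{n,d}\,\tilde W$ for a standard Wigner matrix $\tilde W$ plus negligible error, and hence $\|W_x\|$ concentrates at the appropriate multiple of $2$; (2) let $v$ be the top eigenvector of $W_x$, set $u = m x + \sqrt{1-m^2}v$ (note $v \perp x$ with probability $1$, and even if not exactly, project), and carefully expand $\langle u^{\otimes d}, T\rangle$ using multilinearity, collecting the pure-signal term $\lambda m^d$, the distinguished noise term $\sim m^{d-2}(1-m^2)\langle v^{\otimes 2}, W_x\rangle$, and a collection of other terms; (3) argue that all the other terms — e.g.\ $\langle v^{\otimes d}, W\rangle$, odd-order contractions, and terms mixing more than two copies of $v$ with fewer than $d-2$ of $x$ — are $o(1)$ or at least do not decrease the bound, using that $v$ is independent of the relevant "slices" of $W$ or using crude injective-norm bounds on the slices; (4) optimize the resulting lower bound $m^d\lambda + (\text{const})\,m^{d-2}(1-m^2)\cdot 2\cdot(\text{scaling})$ over $m\in[0,1]$, and verify by algebra that it equals the stated $L_d(\lambda) = \max_m m^d(\lambda + \sqrt{2d/(d-1)}\sqrt{M(1+M)})$ with $M = (d-1)(1-m^2)/m^2$. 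The matching of constants should come out of writing $\sqrt{1-m^2} = m\sqrt{M/(d-1)}$ and bookkeeping the $\binom{d}{2}$ and the variance normalization of $W_x$ against the definition of $\mu_d$'s normalization.

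The main obstacle I anticipate is step (3): controlling the cross terms in the expansion of $\langle (mx + \sqrt{1-m^2}v)^{\otimes d}, W\rangle$. There are $2^d$ terms, and while the "all-$x$" and "two-$v$, rest-$x$" terms are the ones we want, terms like "three-$v$, $(d-3)$-$x$" or "one-$v$, $(d-1)$-$x$" involve contractions of $W$ that are correlated with $v$ in complicated ways (since $v$ was chosen as an eigenvector of a $W$-dependent matrix). The cleanest route is probably to bound each such term by (its combinatorial coefficient) times (a power of $m$ and $\sqrt{1-m^2}$) times (the injective norm or operator norm of the appropriate slice of $W$, which is $O(\mathrm{polylog})$ or even $O(1)$ after normalization and in any case $n^{o(1)}$), and observe that because $v$ has a $\sqrt{1-m^2}$ prefactor to a power $\ge 3$ or the slice has lower order, these are all negligible compared to the retained $\Theta(1)$ terms — possibly one needs to treat a borderline case (the one-$v$ term, coefficient $d$, prefactor $m^{d-1}\sqrt{1-m^2}$, slice being a vector $W_x' := \langle W, e_i\otimes x^{\otimes(d-1)}\rangle$ with $\|W_x'\| = O(1)$) by noting it is a Gaussian with controlled variance and hence $O(1)$ rather than growing, which does not affect the $1-o(1)$-probability asymptotic statement since the main term is $\Theta(1)$ but one should double check it cannot be a large negative contribution — if it can, one replaces $v$ by $\pm v$ to fix the sign of the dominant term and absorbs the rest. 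A secondary subtlety is that Theorem~\ref{thm:bbp-wig} as stated is about a spiked matrix, but here $W_x$ is unspiked (or we could view it as having a tiny spike from the signal cross-terms); we only need the unspiked top-eigenvalue convergence, which is classical and weaker, so this is not a real difficulty.
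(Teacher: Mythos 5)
There is a genuine gap, and it is located exactly where you declared there to be no difficulty. Your construction takes $v$ to be the top eigenvector of the single matrix slice $W_x$ (the contraction of $W$ against $x^{\otimes(d-2)}$), so the only noise term you retain is the order-two one, $\binom{d}{2}m^{d-2}(1-m^2)\,v^\top W_x v$. With $v$ chosen this way the order-one term $d\,m^{d-1}\sqrt{1-m^2}\,\langle W_1,v\rangle$ (where $W_1$ is the vector slice $\langle W, e_i\otimes x^{\otimes(d-1)}\rangle$) is indeed only $O(1/\sqrt n)$, because $v$ is essentially independent of $W_1$ — but that means you have thrown away a $\Theta(1)$ contribution rather than merely controlled a nuisance term. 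Carrying out your step (4) gives the lower bound $\max_m\bigl[\lambda m^d+m^{d-2}(1-m^2)\sqrt{2d(d-1)}\bigr]$, which in the notation of the theorem is $\max_m m^d\bigl(\lambda+\sqrt{2d/(d-1)}\,M\bigr)$ — with $M$ in place of $\sqrt{M(1+M)}$ — and this is strictly smaller than $L_d(\lambda)$ for every $m\in(0,1)$. The ``$+1$'' in $1+M$ is precisely the contribution of the linear slice, and your argument cannot recover it.

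The paper's proof gets both contributions simultaneously by choosing $u$ to be the top eigenvector of the \emph{auxiliary spiked matrix} $Y=\beta\frac{d}{2}W_1W_1^\top+\sqrt{d(d-1)/2}\,W_2$ (with $W_1,W_2$ the slices restricted to $x^\perp$, so they are independent of each other and of the remaining slices). The spiked-matrix BBP theorem then gives $\langle u,W_1\rangle^2\approx(2/d)(1-1/\beta^2)$ and $u^\top W_2 u\approx\sqrt{2/(d(d-1))}\cdot 2/\beta$ at the same time, and optimizing over $\beta\ge 1$ yields $\sqrt{A^2+B^2}$ where $A,B$ are the separate gains from the two slices; this is exactly $\sqrt{M(1+M)}$. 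So your remark that ``we only need the unspiked top-eigenvalue convergence, which is classical and weaker'' is the opposite of the truth: the spiked version of Theorem~\ref{thm:bbp-wig} is the engine of the proof. A secondary point on your step (3): for the slices $W_i$ with $i\ge 3$, a crude bound by the injective norm of the slice is $\Theta(1)$ and does not suffice given the $\Theta(1)$ prefactors; you must use that those slices are independent of the chosen $u$, so that each $\langle W_i,u^{\otimes i}\rangle$ is a centered Gaussian of variance $O(1/n)$. This requires defining the slices on $x^\perp$ (rather than your full $W_x$ followed by projection) so that the independence is exact.
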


\begin{proof}
By Gaussian spherical symmetry we can assume without loss of generality that the spike is $x = e_1$, the first standard basis vector; thus we write $T = \lambda e_1^{\otimes d} + W$ with $W \sim \WT(d)$. Writing a general unit vector as $v = m e_1 + \sqrt{1-m^2} u$, where $u \perp e_1$ is a unit vector, we expand $v^{\otimes d}$ to write
$$ \langle T, v^{\otimes d} \rangle = \lambda m^d + \sum_{i=0}^d \binom{d}{i} m^{d-i} (1-m^2)^{i/2} \langle W_i, u^{\otimes i} \rangle, $$
where $W_i$ is the sub-tensor of $W$ for which the first $i$ indices are not $1$ and the remaining indices are $1$.

Note that the $W_i$ are independent. We will choose $u$ to optimize the interactions with $W_1$ and $W_2$, independently from all other $W_i$. It follows that for all $i \ne 1,2$, the terms $\langle W_i, u^{\otimes i} \rangle$ are $o(1)$ as $n\to\infty$ with high probability, and we have
\begin{equation} \langle T, v^{\otimes d} \rangle = \lambda m^d + d m^{d-1} \sqrt{1-m^2} \langle W_1, u \rangle + \binom{d}{2} m^{d-2} (1-m^2) u^\top W_2 u + o(1). \label{eq:T-expand}\end{equation}

Specifically, let us take $u$ to be the top eigenvector of the auxiliary spiked matrix
$$ Y = \beta \frac{d}{2} W_1 W_1^\top + \sqrt{\frac{d(d-1)}{2}} W_2 $$
for some parameter $\beta \geq 1$ to be optimized later. Since $\sqrt{\frac{d}{2}} W_1$ has norm converging to 1 in probability and $\sqrt{\frac{d(d-1)}{2}} W_2$ is a Gaussian Wigner matrix, the classical eigenvalue transition for spiked Wigner matrices (Theorem~\ref{thm:bbp-wig}) implies that the top eigenvalue $u^\top Y u$ converges almost surely to $\beta + 1/\beta$ (as $n \to \infty$), and the top eigenvector $u$ is correlated with the normalized spike as $\langle u, W_1 \sqrt{d/2} \rangle^2 \to 1 - 1/\beta^2$ almost surely \cite{peche,fp,cdf,eig-vec}. Noting that
$$\beta + \frac1\beta + o(1) = u^\top Y u = \frac{\beta d}{2} \langle u,W_1 \rangle^2 + \sqrt{\frac{d(d-1)}{2}} \,u^\top W_2 u, $$
we solve for $u^\top W_2 u = \sqrt{\frac{2}{d(d-1)}}\cdot\frac{2}{\beta} + o(1)$ and plug into (\ref{eq:T-expand}) to obtain
$$ \|T\| \ge \langle v^{\otimes d}, T \rangle = \lambda m^d + m^{d-1} \sqrt{2d(1-m^2)(1-1/\beta^2)} + m^{d-2} (1-m^2) \sqrt{2d(d-1)} /\beta + o(1). $$

\noindent To obtain the strongest possible bound, we maximize this expression over all $m \in [0,1]$ and $\beta \ge 1$. We can optimize in closed form for
$$ \beta = \left( 1 + \frac{m^2}{(1-m^2) (d-1)}\right)^{1/2}. $$
The result as stated now follows by simple algebra.
\end{proof}

\begin{corollary}
\label{cor:sph-upper}
Fix $d \ge 3$ and $\lambda \ge 0$. Let $\cX$ be any prior supported on the unit sphere in $\RR^d$. Let $\mu_d$ be given by Theorem~\ref{thm:inj-norm} and let $L_d(\lambda)$ be defined as in Theorem~\ref{thm:sph-upper}. If $L_d(\lambda) > \mu_d$ then strong detection and weak recovery are possible for $\WT(d,\lambda,\cX)$.
\end{corollary}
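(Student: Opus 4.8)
The plan is to obtain both conclusions directly from Theorem~\ref{thm:sph-upper} and Theorem~\ref{thm:inj-norm}; beyond these, the only input is the hypothesis $L_d(\lambda) > \mu_d$. First I would record the gap $\delta := L_d(\lambda) - \mu_d > 0$. Theorem~\ref{thm:inj-norm} says a sample from $\WT(d)$ has injective norm $\mu_d + o(1)$ with probability $1-o(1)$, and Theorem~\ref{thm:sph-upper} says a sample from $\WT(d,\lambda,\cX)$ has injective norm at least $L_d(\lambda) - o(1) = \mu_d + \delta - o(1)$ with probability $1-o(1)$. Essentially everything follows from comparing these two concentration statements.

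For strong detection, I would fix the threshold $\tau := \mu_d + \delta/2$ and use the test that outputs ``spiked'' precisely when $\|T\| > \tau$. Under $\WT(d)$ we have $\|T\| = \mu_d + o(1) < \tau$ with probability $1-o(1)$, and under $\WT(d,\lambda,\cX)$ we have $\|T\| \ge \mu_d + \delta - o(1) > \tau$ with probability $1-o(1)$; hence both type-I and type-II error probabilities are $o(1)$ and the test succeeds with probability $1-o(1)$.

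For weak recovery, I would take $\hat x = \hat x(T)$ to be a (measurable) maximizer of $v \mapsto \langle T, v^{\otimes d} \rangle$ over the unit sphere, so that $\langle T,\hat x^{\otimes d} \rangle = \|T\|$. Writing $T = \lambda x^{\otimes d} + W$ and using $\langle W,\hat x^{\otimes d} \rangle \le \|W\|$ pointwise,
\[ \|T\| = \langle T,\hat x^{\otimes d} \rangle = \lambda \langle x,\hat x \rangle^d + \langle W,\hat x^{\otimes d} \rangle \le \lambda \langle x,\hat x \rangle^d + \|W\|. \]
On the event of probability $1-o(1)$ where simultaneously $\|T\| \ge \mu_d + \delta - o(1)$ and $\|W\| \le \mu_d + o(1)$ (the latter since $W$ is marginally distributed as $\WT(d)$), this rearranges to $\lambda \langle x,\hat x \rangle^d \ge \delta - o(1) \ge \delta/2$ for $n$ large. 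Since each term in the maximum (\ref{eq:upper-opt}) defining $L_d$ is nondecreasing in $\lambda$ (the coefficient $m^d$ of $\lambda$ is nonnegative), we have $L_d(0) \le L_d(\lambda)$, while $L_d(0) \le \mu_d$ by applying Theorem~\ref{thm:sph-upper} with $\lambda = 0$ and comparing with Theorem~\ref{thm:inj-norm}; hence $L_d(\lambda) > \mu_d$ forces $\lambda > 0$. Thus on the good event $\langle x,\hat x \rangle^d \ge \delta/(2\lambda) =: 2\eps > 0$, and bounding $\langle x,\hat x \rangle^d \ge -1$ on the complementary $o(1)$-probability event gives $\EE[\langle x,\hat x \rangle^d] \ge 2\eps(1-o(1)) - o(1) \ge \eps$ for all $n$ large, which is weak recovery.

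I do not expect a real obstacle: the corollary is a repackaging of the two injective-norm estimates. The only points needing a little care are (a) ruling out $\lambda = 0$, handled above via monotonicity of $L_d$ and the $\lambda = 0$ instance of Theorem~\ref{thm:sph-upper}; and (b) passing from a high-probability lower bound on $\langle x,\hat x \rangle^d$ to a lower bound on its expectation, where one simply absorbs the bad event using $\langle x,\hat x \rangle^d \ge -1$. Measurability of the argmax selection is routine and can be ignored; alternatively one could sidestep it entirely by using the explicit near-optimal vector $m^\star e_1 + \sqrt{1-(m^\star)^2}\,u$ built in the proof of Theorem~\ref{thm:sph-upper}, whose correlation with the spike $e_1$ is exactly the optimizer $m^\star \in (0,1]$ of (\ref{eq:upper-opt}).
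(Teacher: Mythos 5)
Your proposal is correct and follows essentially the same route as the paper: threshold the injective norm for detection, and for recovery output the maximizer $v$ of $\langle T, v^{\otimes d}\rangle$ and rearrange $L_d(\lambda) - o(1) \le \lambda\langle x,v\rangle^d + \mu_d + o(1)$. Your extra remarks (ruling out $\lambda=0$ and passing from a high-probability bound to a bound in expectation) are fine elaborations of points the paper treats implicitly.
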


\begin{proof}
If $T = \lambda x^{\otimes d} + W$ we have $\|T\| \ge L_d(\lambda) - o(1)$ with probability $1-o(1)$ as $n\to\infty$. If instead $T = W$ is unspiked, we have $\|T\| \le \mu_d + o(1)$. It follows that strong detection is possible by thresholding $\|T\|$.

To perform weak recovery given $T = \lambda x^{\otimes d} + W$, output the unit vector $v$ maximizing $\langle v^{\otimes d},T \rangle$. With probability $1-o(1)$ we have
$$L_d(\lambda) - o(1) \le \|T\| = \langle v^{\otimes d},T \rangle = \lambda \langle v,x \rangle^d + \langle v^{\otimes d},W \rangle \le \lambda \langle v,x \rangle^d + \mu_d + o(1)$$
and so $\langle v,x \rangle^d \ge \frac{1}{\lambda} (L_d(\lambda) - \mu_d) - o(1)$, which is bounded above 0 as $n \to \infty$ (by assumption). Therefore weak recovery is possible.
\end{proof}

To prove the upper bound portion of Theorem~\ref{thm:dinf-spherical}, we let $\Lambda^*_{\sph,d}$ be the point at which $\lambda > \Lambda^*_{\sph,d}$ implies $L_d(\lambda) > \mu_d$. Note that by considering $m=1$ in Theorem~\ref{thm:sph-upper}, we obtain that the injective norm of a spiked tensor is at least the size of the spike (i.e.\ $L_d(\lambda) \ge \lambda$), as noted in \cite{rm}. However, the derivative of (\ref{eq:upper-opt}) at $m=1$ is $-\infty$, implying a strict separation: $L_d(\lambda) > \lambda$ and so $\Lambda^*_{\sph,d} < \mu_d$. Therefore, for any $d$ it is possible to distinguish the spiked and unspiked models for some $\lambda$ strictly less than the injective norm of a random tensor.

In Appendix~\ref{app:asymptotics} we determine asymptotic forms of Theorem~\ref{thm:inj-norm} and Corollary~\ref{cor:sph-upper}. Specifically, the value $\mu_d$ behaves as $d \to \infty$ as
\begin{align*}
    \mu_d^2 &= 2 \log d + 2 \log \log d + 2 + o(1),
    \intertext{while
    $L_d(\lambda) > \mu_d$ as soon as $\lambda > \Lambda_{\sph,d}^*$, with}
    (\Lambda_{\sph,d}^*)^2 &= 2 \log d + 2 \log \log d + o(1).
\end{align*}

\section{Results for discrete priors}
\label{sec:discrete}

In contrast to the upper and lower bounds discussed in the previous section for the spherical prior, which diverge with $d$, the statistical thresholds for discrete priors tend to remain bounded as $d\to\infty$. This dichotomy is somewhat surprising given that the spherical and Rademacher thresholds agree exactly ($\lambda=1$) for $d=2$. In this section we present some general and some specific results confirming this phenomenon. In particular, we establish upper and lower bounds under the Rademacher and sparse Rademacher priors, which match in either of the limits $d \to \infty$ and $\rho \to 0$ (recall that $\rho$ is the density of nonzeros). We thus establish Theorems~\ref{thm:dinf-rademacher}, \ref{thm:dinf-sparse-rademacher}, and~\ref{thm:p0-sparse-rademacher}.

\subsection{Upper bounds}

A natural approach to hypothesis testing is the MLE (maximum likelihood estimate), the maximum of the tensor form over the support of the prior. For discrete priors of exponential cardinality (including Rademacher and sparse Rademacher), we can control this via a na\"ive union bound to obtain positive results for both detection and recovery (similar to \cite{bmvx}). Note however that this only yields non-efficient procedures for detection and recovery, since they require exhaustive search over all possible spikes.
\begin{proposition}\label{prop:cardinality-upper-bound}
Let $\cX$ be a prior supported on the unit sphere in $\RR^n$ and with exponential support:
$$ \limsup_{n \to \infty} \frac{1}{n} \log |\supp \cX_n| = c. $$ 
For any $d\ge 2$, when $\lambda > 2\sqrt{c}$, there exists a hypothesis test distinguishing $\WT(d,\lambda,\cX)$ from $\WT(d)$ with probability $o(1)$ of error as $n\to\infty$ (i.e.\ strong detection is possible) and furthermore there exists an estimator that achieves nontrivial correlation with the spike (i.e.\ weak recovery is possible).
\end{proposition}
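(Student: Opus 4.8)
The plan is to prove Proposition~\ref{prop:cardinality-upper-bound} via a standard union-bound argument on the tensor form $\langle v^{\otimes d}, T \rangle$ over the (exponentially many) candidate spikes $v \in \supp \cX_n$. First I would record the basic Gaussian tail fact underlying everything: for a fixed unit vector $v$, under the unspiked model $Q_n = \WT(d)$ we have $\langle v^{\otimes d}, W \rangle \sim \cN(0, 2/n)$ (by the normalization in Definition~\ref{def:wig-tensor}), so $\Pr_{Q_n}[\langle v^{\otimes d}, W \rangle \ge s] \le \exp(-n s^2/4)$. Taking $|\supp \cX_n| \le \exp((c+\eps)n)$ for $n$ large and applying a union bound, for any $s$ with $s^2/4 > c + \eps$ we get that with probability $1-o(1)$ under $Q_n$, every $v \in \supp \cX_n$ satisfies $\langle v^{\otimes d}, W \rangle < s$. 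Concretely, fix any $\lambda > 2\sqrt{c}$, choose $\eps > 0$ and a threshold $\tau$ with $2\sqrt{c+\eps} < \tau < \lambda$; then under $Q_n$, $\max_{v \in \supp \cX_n} \langle v^{\otimes d}, T \rangle < \tau$ with high probability.

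Next I would handle the spiked model $P_n = \WT(d, \lambda, \cX)$. Here $T = \lambda x^{\otimes d} + W$ with $x \in \supp \cX_n$, so plugging in $v = x$ gives $\langle x^{\otimes d}, T \rangle = \lambda \|x\|^{2d} + \langle x^{\otimes d}, W \rangle = \lambda + \langle x^{\otimes d}, W \rangle$, and since $\langle x^{\otimes d}, W \rangle \sim \cN(0,2/n)$ is $o(1)$ with high probability (even conditionally on $x$, by independence of $W$ and $x$), we get $\max_{v \in \supp \cX_n} \langle v^{\otimes d}, T \rangle \ge \lambda - o(1) > \tau$ with high probability under $P_n$. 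Thus the test ``output spiked iff $\max_{v \in \supp \cX_n}\langle v^{\otimes d}, T\rangle \ge \tau$'' separates the two models with error $o(1)$, giving strong detection.

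For weak recovery, let $\hat x = \hat x(T) = \arg\max_{v \in \supp \cX_n} \langle v^{\otimes d}, T \rangle$ (the MLE, breaking ties arbitrarily). On the high-probability event above, I would argue as in the proof of Corollary~\ref{cor:sph-upper}: the union bound of the previous paragraph, applied this time to the noise component of $T$ under $P_n$ (again using independence of $W$ from $x$, so the tail bound for $\langle v^{\otimes d}, W\rangle$ holds for each fixed $v$ and the union bound goes through), gives $\max_{v} \langle v^{\otimes d}, W \rangle < \tau$ with probability $1-o(1)$; hence
\[
\lambda - o(1) \le \langle \hat x^{\otimes d}, T \rangle = \lambda \langle \hat x, x \rangle^d + \langle \hat x^{\otimes d}, W \rangle \le \lambda \langle \hat x, x \rangle^d + \tau,
\]
so $\langle \hat x, x \rangle^d \ge (\lambda - \tau)/\lambda - o(1)$, a constant bounded above $0$. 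When $d$ is even this is a bound on $|\langle \hat x, x\rangle|^d$; in either parity it shows $\EE[\langle x, \hat x\rangle^d]$ (or $\EE|\langle x,\hat x\rangle|^d$) is bounded away from $0$, which is weak recovery.

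The only mild subtlety — and the step I would be most careful about — is that in the spiked-model union bound one must argue correctly that $\langle v^{\otimes d}, W \rangle$ still has the clean $\cN(0, 2/n)$ marginal for each fixed $v$ and that the union bound is valid even though $x$ (which determines $T$ and could in principle correlate with which $v$ is large) is random: this works because we condition on $x$, use that $W \sim \WT(d)$ is independent of $x$, apply the deterministic-$v$ union bound to $W$ alone, and only afterwards integrate over $x$. Everything else is routine Gaussian concentration and the $\limsup$ bound on $|\supp \cX_n|$, so there is no real obstacle; the result is essentially the observation of \cite{bmvx} that exponential-cardinality priors admit a $2\sqrt{c}$ upper bound for every order $d$.
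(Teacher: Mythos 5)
Your proposal is correct and is essentially identical to the paper's own proof: the same MLE statistic, the same union bound over $\supp\cX_n$ with the Gaussian tail $\exp(-ns^2/4)$, and the same argument for weak recovery via the maximizer. The conditioning-on-$x$ point you flag is valid (the paper leaves it implicit) and does not change the argument.
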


\begin{proof}
Given a tensor $T$, consider the maximum likelihood statistic
\begin{equation}
\label{eq:mle}
m = \max_{v \in \supp\cX} \langle T, v^{\otimes d} \rangle.
\end{equation}
We will show that thresholding $m$ suitably yields a hypothesis test that distinguishes the spiked and unspiked models with $o(1)$ probability of error of either type.

Suppose first that $T = \lambda x^{\otimes d} + W$ is drawn from the spiked model $\WT(d,\lambda,\cX)$ with spike $x$. Then
$$ m \geq \langle T, x^{\otimes d} \rangle = \lambda + \langle W, x^{\otimes d} \rangle = \lambda + \cN(0,2/n), $$
which, for any fixed $\eps > 0$, is greater than $\lambda - \eps$ with probability $1 - o(1)$.

On the other hand, suppose that $T$ is drawn from the unspiked model $\WT(d)$. Then by taking a union bound over all $v \in \supp \cX$,
\begin{align*}
    \Pr[m \geq \lambda - 2\eps] &\leq |\supp \cX_n| \Pr[\langle W, v^{\otimes d} \rangle \geq \lambda - 2\eps] \\
    &= |\supp \cX_n| \Pr[ \N(0,2/n) \geq \lambda - 2\eps ] \\
    &\leq \exp\left(n\left(\frac1n \log |\supp \cX_n| - \frac14 (\lambda - 2\eps)^2 \right)\right),
\end{align*}
and so long as $\lambda > 2\sqrt{c}$, we can choose $\eps > 0$ such that this probability of error is $o(1)$. Hence, thresholding $m$ at $\lambda - \eps$, we obtain a hypothesis test with $o(1)$ probability of error of either type.

% recovery
In order to perform weak recovery given $T = \lambda x^{\otimes d} + W$, output the $v$ that maximizes (\ref{eq:mle}). From the analysis above we have with probability $1-o(1)$, $\langle T,v^{\otimes d} \rangle \ge \lambda - \eps$ and $\langle W,v^{\otimes d} \rangle \le \lambda - 2\eps$. This means we have
$$\lambda - \eps \le \langle T, v^{\otimes d}\rangle = \lambda \langle x,v \rangle^d + \langle W,v^{\otimes d} \rangle \le \lambda \langle x,v \rangle^d + \lambda - 2\eps$$
and so $\langle x,v \rangle^d \ge \eps/\lambda$, implying weak recovery.
\end{proof}

The following MAP (maximum a posteriori) result is stronger for non-uniform priors. Recall that the R\'enyi entropy of order $\alpha \ge 0$ for a discrete random variable $X$ (taking values $x_i$ with probability $p_i$) is $H_\alpha(X) = \frac{1}{1-\alpha} \log\left(\sum_i p_i^\alpha\right)$.
\begin{proposition}\label{prop:entropy-upper-bound}
Fix $d\ge 2$ and let $\cX$ be a prior on the unit sphere in $\RR^n$, with bounded R\'enyi entropy density: there exists $\delta > 0$ with 
$$ \limsup_{n \to \infty} \frac1n H_{1-\delta}(\cX) < \infty. $$
Suppose furthermore that the varentropy $V(\cX) \defeq \Var_{x \sim \cX}[-\log \Pr_\cX(x)]$ is $o(n^2)$.
Let $s = \limsup_{n \to \infty} \frac{1}{n} H_1(\cX)$, the Shannon entropy density. Then when $\lambda > 2\sqrt{s}$, there exists a hypothesis test distinguishing $\WT(d,\lambda,\cX)$ from $\WT(d)$ with probability $o(1)$ of error (i.e.\ strong detection is possible) and furthermore there exists an estimator that achieves nontrivial correlation with the spike (i.e.\ weak recovery is possible).
\end{proposition}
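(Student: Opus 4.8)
The plan is to mirror the proof of Proposition~\ref{prop:cardinality-upper-bound}, but to replace the na\"ive union bound over the whole support $\supp\cX_n$ with a union bound over a small \emph{typical set} of near-maximal-probability spikes. Fix a small $\eps > 0$ and let $A_n \defeq \{ v \in \supp\cX_n : \Pr_{\cX_n}(v) \ge \exp(-n(s+\eps)) \}$. The crucial observation is an asymptotic-equipartition-type statement: on one hand $|A_n| \le \exp(n(s+\eps))$, which is immediate since the probabilities $\Pr_{\cX_n}(v)$ sum to at most $1$; on the other hand $\Pr_{x\sim\cX_n}[x \notin A_n] = o(1)$, so $A_n$ carries almost all the prior mass. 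Since the Shannon entropy density $s$ can be much smaller than the support-cardinality density $c$ of Proposition~\ref{prop:cardinality-upper-bound} whenever the prior is non-uniform, restricting the union bound to $A_n$ is precisely what lowers the threshold from $2\sqrt{c}$ to $2\sqrt{s}$.

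To prove $\Pr_{x\sim\cX_n}[x \notin A_n] = o(1)$ I would apply Chebyshev's inequality to the random variable $-\log\Pr_{\cX_n}(x)$: its mean is $H_1(\cX_n)$, which is at most $n(s+\eps/2)$ for all large $n$ by the definition $s = \limsup_n \frac1n H_1(\cX_n)$, and its variance is the varentropy $V(\cX_n) = o(n^2)$ by hypothesis, so $\Pr[-\log\Pr_{\cX_n}(x) > n(s+\eps)] \le V(\cX_n)/(n\eps/2)^2 \to 0$. The R\'enyi-entropy hypothesis is used only in an auxiliary way: since $H_1 \le H_{1-\delta}$, it guarantees $H_1(\cX_n) < \infty$ and $s < \infty$, so that $A_n$ is a genuine finite set and the threshold $2\sqrt{s}$ is meaningful.

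The remainder is the same bookkeeping as in Proposition~\ref{prop:cardinality-upper-bound}, applied to the (inefficient) statistic $m_A = \max_{v \in A_n} \langle T, v^{\otimes d} \rangle$, which depends only on the known prior and not on the spike. On the spiked side, with probability $1-o(1)$ the true spike $x$ lies in $A_n$ and $\langle T, x^{\otimes d}\rangle = \lambda + \cN(0,2/n) \ge \lambda - \eps'$, hence $m_A \ge \lambda - \eps'$. On the unspiked side, a union bound over $A_n$ gives $\Pr[m_A \ge \lambda - \eps'] \le |A_n|\,\Pr[\cN(0,2/n) \ge \lambda - \eps'] \le \exp(n[(s+\eps) - \frac14(\lambda-\eps')^2])$, which is $o(1)$ once $\eps, \eps' > 0$ are taken small enough, using $\lambda > 2\sqrt{s}$. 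Thresholding $m_A$ at $\lambda - \eps'$ achieves strong detection; and the maximizer $\hat v$ over $A_n$ achieves weak recovery, since in the spiked model $W$ is drawn independently of $\cX_n$ (hence of $A_n$), so the same union bound gives $\langle W, \hat v^{\otimes d}\rangle < \lambda - 2\eps'$ with high probability, whence $\lambda - \eps' \le \langle T, \hat v^{\otimes d}\rangle = \lambda\langle x,\hat v\rangle^d + \langle W, \hat v^{\otimes d}\rangle < \lambda\langle x,\hat v\rangle^d + \lambda - 2\eps'$ forces $\langle x,\hat v\rangle^d > \eps'/\lambda > 0$. The only step that is not pure bookkeeping is the typical-set concentration bound $\Pr[x\notin A_n] = o(1)$, which is exactly where the varentropy hypothesis enters; I expect this to be the main (though still modest) obstacle.
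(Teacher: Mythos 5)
Your proof is correct, but it takes a genuinely different route from the paper's. The paper analyzes the MAP statistic $m = \sup_{v \in \supp \cX}\,\langle T, v^{\otimes d}\rangle + \frac{2}{n\lambda}\log\Pr_\cX(v)$: under the unspiked model it completes the square in a weighted union bound, which produces the sum $\sum_v \Pr_\cX(v)^{1-2\eps/\lambda} = \exp(\frac{2\eps}{\lambda}H_{1-2\eps/\lambda}(\cX))$, and this is exactly where the R\'enyi hypothesis does real work; the varentropy hypothesis is used, as in your argument, to Chebyshev-concentrate $-\log\Pr_\cX(x)$ for the true spike. Your typical-set truncation instead reduces the whole proposition to the cardinality bound of Proposition~\ref{prop:cardinality-upper-bound} with $c$ replaced by $s+\eps$, using the AEP-style facts $|A_n| \le \exp(n(s+\eps))$ and $\Pr[x\in A_n] = 1-o(1)$. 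Each approach has something to recommend it: yours is more elementary and, as you correctly observe, makes the R\'enyi condition essentially superfluous (it is needed only to guarantee $s<\infty$, and the claim is vacuous otherwise), so you in fact prove a marginally stronger statement; the paper's version works with the Bayes-natural MAP objective over the full support without discarding atypical spikes, and its explicit threshold $\lambda - 2s/\lambda$ makes transparent how the log-prior penalty shifts the test statistic. One small point of care in your write-up: when invoking $H_1(\cX_n) \le n(s+\eps/2)$ you should say ``for all sufficiently large $n$'' (which the $\limsup$ gives you), and for weak recovery you need both exponents $(s+\eps)-\frac14(\lambda-\eps')^2$ and $(s+\eps)-\frac14(\lambda-2\eps')^2$ to be negative, which your choice of small $\eps,\eps'$ handles; neither is a gap, just bookkeeping worth making explicit.
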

\noindent The proof is deferred to Appendix~\ref{app:proof-entropy-upper-bound} but the idea is the same as above. Note that as $H_\alpha$ is monotonically decreasing in $\alpha$, the R\'enyi entropy bound guarantees that the Shannon entropy density is finite.

\subsection{Lower bounds as $d \to \infty$}

As a corollary of our main lower bound theorem (Theorem~\ref{thm:noise-conditioning}) we have the following lower bound in the $d \to \infty$ limit.
\begin{proposition}\label{prop:renyi-lower-bound}
Suppose that $\cX$ has a rate function $f_\cX(t)$ (Definition~\ref{def:rate-function}) and is locally subgaussian (with some constant). Suppose the limit $F = \lim_{t \to 1^-} f_\cX(t)$ exists. Then for any $\lambda < 2 \sqrt{F}$, it holds for sufficiently large $d$ that weak detection and weak recovery are impossible.
\end{proposition}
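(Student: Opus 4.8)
The plan is to derive the proposition directly from Theorem~\ref{thm:noise-conditioning}: given $\lambda < 2\sqrt{F}$, I will choose $\lambda^* \in (\lambda, 2\sqrt{F})$ and check that hypotheses (i)--(v) of that theorem hold for all sufficiently large $d$. Hypotheses (i) (symmetry of $\langle x,x'\rangle$, a standing assumption), (ii) (existence of a rate function) and (iv) (local subgaussianity) are precisely what the proposition assumes, and hypothesis (v) is vacuous once $d \ge 3$. Thus the whole task reduces to verifying hypothesis (iii),
\[
f_\cX(t) \ \ge\ \frac{(\lambda^*)^2}{2}\,\frac{t^d}{1+t^d}\qquad\text{for all } t \in [0,1),
\]
for every large $d$. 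The key structural point is that $\frac{t^d}{1+t^d} < \frac12$ on $[0,1)$, so the right-hand side never exceeds $(\lambda^*)^2/4$; it is exactly this factor $\frac{t^d}{1+t^d}$ (in place of the $t^d$ that the unconditioned second moment would give) that converts the naive threshold $\sqrt{2F}$ into $2\sqrt{F}$, i.e.\ the $\sqrt2$ saving from noise conditioning. When $F = \infty$ (as for a continuous prior) the statement asserts impossibility for every $\lambda$, and the argument below goes through verbatim with any choice of $\lambda^*$.

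To verify (iii) I would split $[0,1)$ at a point $a$ close to $1$. Since $f_\cX$ is nondecreasing with $\lim_{t\to1^-} f_\cX(t) = F > (\lambda^*)^2/4$, fix $a \in (0,1)$ with $f_\cX(a) > (\lambda^*)^2/4$. For $t \in [a,1)$ we then get $f_\cX(t) \ge f_\cX(a) > (\lambda^*)^2/4 > \frac{(\lambda^*)^2}{2}\cdot\frac{t^d}{1+t^d}$, valid for every $d \ge 2$. For $t \in [0,a]$ I would use local subgaussianity: the tail bound of Definition~\ref{def:locally-subg} forces the rate function to grow at least quadratically near the origin, so (after enlarging $f_\cX$ to a larger, still valid, rate function if necessary --- which only makes (iii) easier) we may assume $f_\cX(t) \ge c\,t^2$ on some interval $[0,T]$ with $c > 0$ and $T \in (0,a]$. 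On $[0,T]$ the desired inequality becomes $\frac{(\lambda^*)^2}{2}\,t^d \le c\,t^2$, i.e.\ $t^{d-2} \le 2c/(\lambda^*)^2$, which holds for all such $t$ once $T^{d-2} \le 2c/(\lambda^*)^2$, and since $T < 1$ this is true for all large $d$. On the leftover piece $[T,a]$ we have $f_\cX(t) \ge f_\cX(T) \ge cT^2 > 0$, a positive constant independent of $d$, whereas $\frac{(\lambda^*)^2}{2}\frac{t^d}{1+t^d} \le \frac{(\lambda^*)^2}{2}a^d \to 0$ as $d \to \infty$; so (iii) holds there too for $d$ large. Combining the three regions, (iii) holds for all large $d$.

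It then follows from Theorem~\ref{thm:noise-conditioning} (applied with this $\lambda^*$, and $d$ large enough that also $d \ge 3$) that $\WT(d,\lambda,\cX)$ is contiguous to $\WT(d)$, that $\TV(\WT(d,\lambda,\cX),\WT(d)) = o(1)$, and that no estimator achieves nontrivial correlation with the spike; in particular weak detection and weak recovery are impossible, as claimed. The one delicate step --- and the one I expect to require the most care --- is the region near $t = 0$: condition (iii) would simply be false there for a pathological rate function (one vanishing on an interval), so one must genuinely invoke local subgaussianity to obtain a quadratic lower bound on $f_\cX$ near $0$, check that for $d \ge 3$ the $t^d$ on the right is negligible against it, and make sure that the local subgaussian estimate --- a statement about finite-$n$ tail probabilities --- can be packaged as a bona fide rate function in the sense of Definition~\ref{def:rate-function}. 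Everything else is elementary monotonicity together with the observation that $a^d \to 0$.
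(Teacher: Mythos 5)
Your proposal is correct and follows essentially the same route as the paper: reduce to verifying condition (iii) of Theorem~\ref{thm:noise-conditioning}, then split $[0,1)$ into a neighborhood of $0$ (handled by the quadratic lower bound on $f_\cX$ coming from local subgaussianity), a middle interval (where $f_\cX$ is bounded below by a positive constant while $\tfrac{t^d}{1+t^d}\to 0$ uniformly), and a neighborhood of $1$ (where $f_\cX(t)>\tfrac{(\lambda^*)^2}{4}\ge\tfrac{(\lambda^*)^2}{2}\tfrac{t^d}{1+t^d}$ since $\lambda^*<2\sqrt{F}$). The only cosmetic difference is that near $t=0$ you let $d$ grow with a fixed cutoff $T$ while the paper shrinks the cutoff $\eps$ to make the bound hold for all $d\ge 3$; your explicit remark that one may enlarge $f_\cX$ to a still-valid rate function dominating $ct^2$ near the origin is a point the paper glosses over.
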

\noindent Note that (assuming $f_\cX$ is continuous at $t=1$) we have $F = f_\cX(1) = - \lim_{n \to \infty} \frac{1}{n} \log(\Pr[ x = x' ])$ which is the collision entropy density.

\begin{proof}
By Theorem~\ref{thm:noise-conditioning}, it suffices to show that, for all $\lambda < 2\sqrt{F}$, it holds for sufficiently large $d$ that $\frac{\lambda^2}{2} \frac{t^d}{1+t^d} \le f_\cX(t)$ for all  $t \in [0,1)$. The intuition for the proof is that when $d$ is large, the function $\frac{t^d}{1+t^d}$ is essentially zero for most of the interval $t \in [0,1)$ but it jumps sharply up to $\frac{1}{2}$ near $t=1$; it follows that only the value of $f_\cX$ near $t=1$ is important.

Let $\eps > 0$, to be chosen later. Provided $\eps$ is sufficiently small, local subgaussianity guarantees that $f_\cX(t) \geq c t^2$ on $[0,\eps]$ for some $c > 0$; thus $\frac{\lambda^2}{2} \frac{t^d}{1+t^d} \le f_\cX(t)$ on $[0,\eps]$, for all $d \geq 3$.

We next establish that for $d$ sufficiently large we have $\frac{\lambda^2}{2} \frac{t^d}{1+t^d} \le f_\cX(t)$ for all $t \in [\eps,1-\eps]$. We have $f_\cX(\eps) > 0$ by local subgaussianity and so (by monotonicity of $f_\cX$) we have a positive lower bound on $f_\cX$ on $[\eps,1-\eps]$. By contrast, $\frac{\lambda^2}{2} \frac{t^d}{1+t^d}$ converges uniformly to $0$ on $[\eps,1-\eps]$ as $d\to\infty$; the claim follows.

It remains to show the bound on $[1-\eps,1)$. Here $f_\cX(t) \geq f_\cX(1-\eps)$, while $\frac{\lambda^2}{2} \frac{t^d}{1+t^d} \leq \frac{\lambda^2}{4}$ (since $x \mapsto \frac{x}{1+x}$ is increasing on $[0,1]$). It is therefore sufficient to choose $\eps$ small enough so that $f_\cX(1-\eps) \ge \frac{\lambda^2}{4}$. This is possible due to the assumption $\lambda^2 < 4 F$ along with the definition of $F$.
\end{proof}

\begin{remark}This argument goes through even if the collision entropy density $F$ is $\infty$, in which case we have that for any $\lambda$ there exists $d$ such that weak detection and weak recovery are impossible. In other words, the threshold $\lambda^*$ must necessarily go to infinity as $d\to\infty$, in contrast to the behavior $\lambda^* = O(1)$ for discrete priors. For instance, recall that the spherical prior has $F = \infty$ and indeed the threshold $\lambda^*$ diverges with $d$.
\end{remark}

\subsection{Rademacher prior}

Consider the Rademacher prior $\cX_\Rade$ where each entry of $x$ is drawn \iid from $\{\pm 1/\sqrt{n}\}$. In this section we prove Theorem~\ref{thm:dinf-rademacher} (large $d$ behavior) as well as give an explicit lower bound for each $d$. The upper bound is immediate from Proposition~\ref{prop:cardinality-upper-bound}.

The rate function of $\cX_\Rade$ follows from Cram\'er's theorem, with the rate function being the convex dual of the cumulant generating function $\log\cosh(t)$, which is
$$ f_{\cX_\Rade}(t) = \log 2 - H\left(\frac{1+t}{2}\right), $$
where $H$ is the binary entropy: $H(p) = - p\log p -(1-p)\log(1-p)$. The necessary convergence condition is satisfied with $b_{n,\cX_\Rade}(t) = f_{\cX_\Rade}(t)$, as the bound $f_{n,\cX_\Rade} \geq f_{\cX_\Rade}$ is precisely the Chernoff bound. Moreover, $\langle x,x' \rangle$ is an average of $n$ \iid symmetric Rademacher random variables, and thus is $1/n$-subgaussian by Hoeffding's theorem. Hence $\cX_\Rade$ is locally subgaussian with constant $1$.

The asymptotic lower bound for large $d$ now follows from Proposition~\ref{prop:renyi-lower-bound}. By Theorem~\ref{thm:noise-conditioning} we have the following explicit bound for any fixed $d$.

\begin{theorem}
\label{thm:rad-lower}
Consider the Rademacher prior $\cX_\Rade$ with $d\ge 3$. Let $\lambda^*_{\Rade,d}$ be the supremum of all values $\lambda^*$ for which
$$\frac{(\lambda^*)^2}{2}\frac{t^d}{1+t^d} \leq f_{\cX_\Rade}(t) \qquad \forall t \in [0,1).$$
Then weak detection and weak recovery are impossible for all $\lambda < \lambda^*_{\sph,d}$.
\end{theorem}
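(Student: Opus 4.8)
The plan is to obtain Theorem~\ref{thm:rad-lower} as a direct corollary of the main lower bound theorem (Theorem~\ref{thm:noise-conditioning}), so the only work is to check that its hypotheses hold for the Rademacher prior. Fix $d \ge 3$ and an arbitrary $\lambda < \lambda^*_{\Rade,d}$. Observe first that the inequality defining $\lambda^*_{\Rade,d}$, namely $\frac{(\lambda^*)^2}{2}\frac{t^d}{1+t^d} \le f_{\cX_\Rade}(t)$ for all $t \in [0,1)$, becomes strictly harder to satisfy as $\lambda^*$ grows, so the set $S$ of admissible $\lambda^*$ is a downward-closed interval with $\sup S = \lambda^*_{\Rade,d}$. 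Since $\lambda < \sup S$, it is not an upper bound for $S$, so there exists an admissible $\lambda^* \in S$ with $\lambda < \lambda^*$; this $\lambda^*$ is the parameter we will feed into Theorem~\ref{thm:noise-conditioning}.

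Next I would verify conditions (i)--(v) of Theorem~\ref{thm:noise-conditioning} for $\cX_\Rade$ with this $\lambda^*$. For (i): $\langle x,x'\rangle = \frac1n \sum_{i=1}^n \eps_i$ with $\eps_i$ i.i.d.\ uniform on $\{\pm 1\}$, whose distribution is symmetric about zero. For (ii): as recorded in the discussion preceding the theorem, Cram\'er's theorem (equivalently, the Chernoff bound, which gives $f_{n,\cX_\Rade} \ge f_{\cX_\Rade}$) shows that $\cX_\Rade$ has rate function $f_{\cX_\Rade}(t) = \log 2 - H\!\left(\frac{1+t}{2}\right)$, and Definition~\ref{def:rate-function} is satisfied with $b_{n,\cX} = f_{\cX_\Rade}$. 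For (iv): $\langle x,x'\rangle$ is an average of $n$ i.i.d.\ symmetric Rademacher variables, hence $1/n$-subgaussian by Hoeffding, so $\cX_\Rade$ is locally subgaussian with constant $1$. For (v): this is vacuous since $d \ge 3$. For (iii): this is precisely the statement that $\lambda^*$ lies in $S$, which holds by construction.

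With all five hypotheses in place, Theorem~\ref{thm:noise-conditioning} (applied with parameter $\lambda^*$) yields, for all signal strengths below $\lambda^*$ --- in particular for our fixed $\lambda$ --- that $\WT(d,\lambda,\cX_\Rade)$ is contiguous to $\WT(d)$, that $\TV(\WT(d,\lambda,\cX_\Rade),\WT(d)) = o(1)$ because $d \ge 3$, and hence that weak detection and weak recovery are impossible. Since $\lambda < \lambda^*_{\Rade,d}$ was arbitrary, this proves the theorem.

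I do not expect a genuine obstacle here: all of the analytic content resides in Theorem~\ref{thm:noise-conditioning} (proved later, in Section~\ref{sec:noise-conditioning-proof}) and in the rate-function and local-subgaussianity computations already carried out for $\cX_\Rade$. The only point that deserves to be stated explicitly is the reduction from ``$\lambda < \lambda^*_{\Rade,d}$'' to ``$\lambda < \lambda^*$ for some \emph{admissible} $\lambda^*$'', which rests on the monotonicity in $\lambda^*$ of the inequality in condition (iii); everything else is a routine verification.
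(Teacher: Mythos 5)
Your proposal is correct and follows exactly the paper's route: the paper likewise derives Theorem~\ref{thm:rad-lower} by verifying the hypotheses of Theorem~\ref{thm:noise-conditioning} for $\cX_\Rade$ (rate function via the Chernoff/Cram\'er bound with $b_{n,\cX}=f_{\cX_\Rade}$, local subgaussianity with constant $1$ via Hoeffding, symmetry of $\langle x,x'\rangle$, and condition (v) vacuous for $d\ge 3$) and then invoking that theorem. Your explicit handling of the supremum via monotonicity in $\lambda^*$ is a minor but welcome addition that the paper leaves implicit.
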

\noindent As with the spherical prior, the $d=2$ case is already well-understood: strong detection and weak recovery are possible only above the spectral threshold $\lambda = 1$ \cite{dam,pwbm}, and weak detection is possible for all $\lambda > 0$ by taking the trace.

\subsection{Sparse Rademacher prior}
\label{sec:sparse-rad}

Recall that the sparse Rademacher prior $\cX_\spRade(\rho)$ with sparsity $\rho \in (0,1]$ draws the spike $x$ by choosing at random a subset of exactly $\rho n$ out of the $n$ indices, setting these indices to $\pm 1/\sqrt{\rho n}$ at random, and setting the remaining indices to 0.

\begin{remark}
\label{rem:iid}
As in \cite{mi}, one can also define a similar \iid prior where each entry is drawn independently from the appropriate distribution: $\pm 1/\sqrt{\rho n}$ each with probability $\rho/2$, and 0 with probability $1-\rho$. We expect that the information theoretic limits are the same for the \iid variant, but in order to apply our lower bound argument, one should condition on the spike having close-to-typical sparsity anyway. For this reason, we consider the exact-support version in order to simplify our proof.
\end{remark}

We will see that the sparse Rademacher prior $\cX_\spRade(\rho)$ has rate function given by
$$f_\rho(t) = \min_{\zeta \in [\max(\rho t,2\rho-1),\rho]} G(\zeta) + \zeta f_\Rade\left(\frac{\rho t}{\zeta}\right)$$
where
$$G(\zeta) = -H(\{\zeta,\rho-\zeta,\rho-\zeta,1-2\rho+\zeta\}) + 2H(\rho)$$
and where $f_\Rade$ is the Rademacher rate function:
$$f_\Rade(t) = \log 2 - H\left(\frac{1+t}{2}\right).$$
Here $H(p) = -p \log p - (1-p)\log(1-p)$ is the binary entropy and $H(\{p_1,\ldots,p_k\}) = -\sum_{i=1}^k p_i \log p_i$.

The intuition behind this rate function is the following. The variable $\zeta$ represents the size (as a fraction of $n$) of the overlap of the supports of $x$ and $x'$. The ``entropy term'' $G(\zeta)$ captures the probability that a particular $\zeta$ value occurs: $\prob{\zeta} \approx \exp(-n G(\zeta))$. Once we have fixed $\zeta$, the problem reduces to a Rademacher problem, causing the $f_\Rade$ term to appear. Due to the exponential scaling, the rate function is dominated by a single value of $\zeta$, hence the minimization. Note that $[\max(\rho t,2\rho-1),\rho]$ is precisely the set of possible $\zeta$ values that can occur given that $\langle x,x' \rangle \ge t$.

\begin{remark}
One can similarly derive the rate function for any prior with \iid finitely-supported entries (say with support size $s$), conditioned on typical proportions of entries. As in Appendix~A of \cite{bmnn}, in general one needs to optimize over $s \times s$ matrices, and the entropy term takes a simple form in terms of the KL divergence.
\end{remark}

The following tail bound shows that $f_\rho$ satisfies our definition of rate function (see Remark~\ref{rem:tail-unif}).

\begin{proposition}
\label{prop:sparse-tail-weak}
Let $x,x'$ be drawn independently from the sparse Rademacher prior $\cX_\spRade(\rho)$. Then for any $t \in [0,1]$ we have
$$\prob{\langle x,x' \rangle \ge t} \lesssim n^{3/2} \exp(-n f_\rho(t)).$$
\end{proposition}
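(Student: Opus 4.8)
The plan is to condition on the size of the overlap of the two random supports, which reduces the tail bound to a Chernoff bound for the Rademacher prior. Let $k = |\supp(x) \cap \supp(x')|$ and $\zeta = k/n$; since each support has size exactly $\rho n$, the variable $k$ is hypergeometric and $\zeta$ ranges over multiples of $1/n$ in $[\max(0,2\rho-1),\rho]$. On the overlap set we have $x_i = \eps_i/\sqrt{\rho n}$ and $x'_i = \eps'_i/\sqrt{\rho n}$ with all $\eps_i,\eps'_i$ i.i.d.\ uniform in $\{\pm1\}$, so
$$\langle x,x'\rangle = \frac{1}{\rho n}\sum_{i\in\supp(x)\cap\supp(x')}\eps_i\eps'_i,$$
and conditionally on $k$ this equals in distribution $\tfrac{\zeta}{\rho}\,\bar\sigma_k$, where $\bar\sigma_k$ is the average of the $k$ i.i.d.\ Rademacher variables $\eps_i\eps'_i$.

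The first ingredient is the conditional tail. The event $\langle x,x'\rangle\ge t$ requires $\bar\sigma_k \ge \rho t/\zeta$, and since $|\bar\sigma_k|\le 1$ this forces $\zeta\ge\rho t$; intersecting with the a priori constraint restricts the relevant overlaps to $\zeta\in[\max(\rho t,2\rho-1),\rho]$, exactly the feasible range appearing in the definition of $f_\rho$. Because the Cram\'er rate function of a Rademacher variable is precisely $f_\Rade(s)=\log2-H\big(\tfrac{1+s}{2}\big)$, the standard Chernoff bound gives
$$\Pr\big[\langle x,x'\rangle\ge t \,\big|\, k\big] \le \exp\!\Big(-k\,f_\Rade\big(\tfrac{\rho t}{\zeta}\big)\Big) = \exp\!\Big(-n\,\zeta\,f_\Rade\big(\tfrac{\rho t}{\zeta}\big)\Big),$$
with the left side equal to $0$ when $\rho t/\zeta>1$.

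The second ingredient is the overlap probability $\Pr[k] = \binom{\rho n}{k}\binom{(1-\rho)n}{\rho n-k} / \binom{n}{\rho n}$. Bounding the numerator binomials by $\binom{m}{j}\le\exp(mH(j/m))$ and the denominator below by the Stirling estimate $\binom{n}{\rho n}\ge c_\rho\,n^{-1/2}\exp(nH(\rho))$ (with $\rho$ fixed), a short entropy computation identifies the exponent,
$$\rho n\,H\big(\tfrac{\zeta}{\rho}\big) + (1-\rho)n\,H\big(\tfrac{\rho-\zeta}{1-\rho}\big) - nH(\rho) = n\big(H(\{\zeta,\rho-\zeta,\rho-\zeta,1-2\rho+\zeta\}) - 2H(\rho)\big) = -n\,G(\zeta),$$
so that $\Pr[k]\lesssim\sqrt n\,\exp(-nG(\zeta))$. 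Summing over the at most $n+1$ values of $k$, each term is at most $\sqrt n\,\exp\big(-n(G(\zeta)+\zeta f_\Rade(\rho t/\zeta))\big)\le\sqrt n\,\exp(-nf_\rho(t))$ by the definition of $f_\rho$ as the minimum over the feasible range, and we obtain $\Pr[\langle x,x'\rangle\ge t]\lesssim n^{3/2}\exp(-nf_\rho(t))$. The factor $n^{3/2}$ is exactly $\sqrt n$ (from the central binomial) times $n$ (the number of overlap values).

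I expect the only genuine work — conceptually routine — to be in the second ingredient: verifying the entropy identity that converts the Stirling exponent into $G(\zeta)$, and taking care at the boundary values of $\zeta$ (near $0$, $\rho$, or $2\rho-1$) where a refined Stirling prefactor would not be uniform. Using only the crude bound $\binom{m}{j}\le e^{mH(j/m)}$ for the numerator sidesteps those boundary issues, so the Stirling lower bound on $\binom{n}{\rho n}$ is the only place a polynomial factor is actually needed.
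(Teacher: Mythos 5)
Your proof is correct and follows essentially the same route as the paper's: condition on the overlap size $\zeta$ of the two supports, apply the Rademacher Chernoff bound conditionally, bound the hypergeometric probability by $\sqrt{n}\exp(-nG(\zeta))$ via Stirling, and take a union over the $O(n)$ overlap values. Your use of the one-sided bound $\binom{m}{j}\le e^{mH(j/m)}$ for the numerator binomials is a minor but sensible refinement that sidesteps the non-uniformity of the two-sided Stirling estimate at the boundary values of $\zeta$, which the paper glosses over.
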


\noindent (Here $\lesssim$ means $\le$ up to a constant that may depend on $\rho$ but not on any other variable.) The proof is deferred to Appendix~\ref{app:sparse-rad-tail}. The next tail bound improves upon the above by eliminating the polynomial factor $n^{3/2}$; however, we only prove it for sufficiently small $t$. This tail bound implies the local subgaussian condition with the correct constant for $d=2$ (see Proposition~\ref{prop:chernoff-subg}).

\begin{proposition}
\label{prop:sparse-tail-strong}
For every $\rho \in (0,1]$ there exist constants $C,T > 0$ such that when $x,x'$ are drawn independently from $\cX_\spRade(\rho)$, we have
$$\prob{\langle x,x' \rangle \ge t} \le C \exp(-n f_\rho(t)) \qquad \forall t \in [0,T].$$
\end{proposition}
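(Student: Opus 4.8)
The plan is to refine the argument behind Proposition~\ref{prop:sparse-tail-weak}, where the $n^{3/2}$ factor arises from a union over the $O(n^{3/2})$ many relevant discrete values of the two summary parameters: the support-overlap size $\zeta n$ and the signed-agreement count within the overlap. For small $t$ we can afford to be cruder, because the rate function $f_\rho(t)$ is itself small and dominated by the behavior near $\zeta = \rho$ (full overlap is unnecessary when $t$ is small — the optimal $\zeta$ stays near $\rho$ and the $f_\Rade$ term drives the bound). The key is to avoid the union bound entirely by exhibiting a single Chernoff-style bound. First I would condition on the support $S$ of $x'$ and write $\langle x, x'\rangle = \frac{1}{\rho n}\sum_{i \in S \cap \supp(x)} \eps_i \eps_i'$ where $\eps_i,\eps_i' \in \{\pm 1\}$; but since $x$ has an exact-size support, the set $S \cap \supp(x)$ is itself random (hypergeometric in size) rather than a product measure, which is the source of the technical friction.

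The main steps would be: (1) Compute the exact moment generating function $\Ex[\exp(s \, n \langle x,x'\rangle)]$, or a clean upper bound for it, by summing over the overlap size $\zeta n$ with its hypergeometric weight and then using the Rademacher MGF $\cosh^{\zeta n}(s/(\rho n))$ within the overlap. This gives an expression of the form $\sum_{\zeta} \binom{n}{\rho n, \ldots}$-type weights times $\cosh(\cdot)^{\zeta n}$, whose exponential rate, after Laplace/saddle-point evaluation of the sum, is governed by $\sup_\zeta$ of (minus the entropy term $G(\zeta)$) plus $\zeta \log\cosh$. (2) Apply the Chernoff inequality $\prob{\langle x,x'\rangle \ge t} \le e^{-snt}\,\Ex[\exp(sn\langle x,x'\rangle)]$ and optimize over $s \ge 0$; by Legendre duality this recovers exactly $f_\rho(t)$ as the exponential rate, with the variational problem over $\zeta$ and the tilt parameter matching the two-level minimization defining $f_\rho$. (3) Control the prefactor: the sum over the $O(n)$ values of $\zeta$ contributes at most a factor $O(n)$ naively, but for $t \in [0,T]$ with $T$ small the summand as a function of $\zeta$ is strongly log-concave with a nondegenerate (order $n$) second derivative at its maximizer, so the sum is within a constant factor of its max term — this is the step that kills the polynomial factor and is why the restriction to small $t$ is needed (near $t = 1$ the maximizer is pushed to the boundary $\zeta = \rho$ and the local quadratic approximation degrades). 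Since $t$ is constrained to $[0,T]$ and the maximizer $\zeta^*(t)$ is continuous with $\zeta^*(0) = \rho$ (an interior-type point for the relevant behavior, or at worst a boundary point where one-sided curvature still gives a geometric sum), the Laplace argument gives an $O(1)$ prefactor uniformly.

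The hard part will be step (3): verifying that for all $t \in [0,T]$ the discrete sum defining the MGF (or the tilted/Chernoff-optimized quantity) is dominated up to a constant by its largest term, with a constant uniform in $t$ and $n$. This requires a careful second-order analysis of the function $\zeta \mapsto -G(\zeta) + \zeta\log\cosh(s^*(t)/(\rho n)\cdot \rho n\cdot\text{(rescaled)})$ near its maximizer, checking that the curvature is bounded away from zero on the range of maximizers $\{\zeta^*(t) : t \in [0,T]\}$, and handling the case where the maximizer sits at the endpoint $\zeta = \rho$ (in which case one bounds the sum by a geometric series using that the discrete derivative at the boundary is bounded away from zero). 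One also needs to be slightly careful that the optimal Chernoff parameter $s^*(t)$ stays in a compact range as $t$ ranges over $[0,T]$, which follows since $s^*(t) \to 0$ as $t \to 0$. Everything else — the MGF computation, the Legendre duality identifying the rate with $f_\rho$, and the reduction to the Rademacher sub-case — is routine bookkeeping with multinomial coefficients and Stirling's approximation, essentially already carried out in the proof of Proposition~\ref{prop:sparse-tail-weak} in Appendix~\ref{app:sparse-rad-tail}.
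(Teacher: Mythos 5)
There is a genuine gap in the step you yourself identify as the hard part, and it stems from a misreading of where the dominant overlap size sits. For small $t$ the minimizer $\zeta^*(t)$ of $\zeta \mapsto G(\zeta) + \zeta f_\Rade(\rho t/\zeta)$ is \emph{not} near $\zeta = \rho$: the entropy term $G$ vanishes at the \emph{typical} overlap of two independent $\rho n$-subsets, namely $\zeta = \rho^2$, and since the $f_\Rade$ term is uniformly negligible as $t \to 0$, one has $\zeta^*(t) \to \rho^2$, an interior point of $[\max(0,2\rho-1),\rho]$ for every $\rho \in (0,1)$. Your proposal repeatedly asserts $\zeta^*(0) = \rho$ and organizes the Laplace analysis around a boundary maximizer; built on that premise, the curvature/geometric-series argument you sketch would be analyzing the wrong point. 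The interior location of $\zeta^*$ is exactly what the paper exploits: strong convexity of $G$ gives $G(\zeta) + \zeta f_\Rade(\rho t/\zeta) \ge f_\rho(t) + \delta(\zeta-\zeta^*)^2$ uniformly for $t \in [0,T]$, and the boundary pieces of the range of $\zeta$ are then exponentially subdominant and disposed of by crude bounds.

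The second problem is the prefactor accounting. Summing over the $O(n)$ integer values of $z = \zeta n$, a summand of the form $e^{-n\delta(\zeta - \zeta^*)^2}$ contributes $\Theta(\sqrt{n})$ times its maximal term, not $O(1)$ times it; so "the sum is within a constant factor of its max term" is false, and on its own this route only yields $O(\sqrt{n})\,e^{-nf_\rho(t)}$. The missing ingredient is the \emph{two-sided} Stirling estimate $\Pr[z] \simeq n^{-1/2}\exp(-nG(\zeta))$ for interior $\zeta$ (valid precisely because $\zeta^* \approx \rho^2$ is interior): the $n^{-1/2}$ there cancels the $\sqrt{n}$ from the Gaussian-type sum, which is how the paper (invoking Lemma~10 of \cite{bmnn}) kills the polynomial factor. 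Relatedly, your diagnosis of the $n^{3/2}$ in Proposition~\ref{prop:sparse-tail-weak} is off: the signed agreements within the overlap are already handled by a Chernoff bound there and contribute no polynomial factor; the $n^{3/2}$ is $\sqrt{n}$ from the one-sided Stirling bound times $n$ from bounding the sum over $z$ by its largest term. Your MGF-plus-Chernoff architecture (with a Sion-type minimax step to identify the Legendre transform with $f_\rho$) could be repaired into a valid alternative proof, but only after correcting both of these points, at which point it essentially reduces to the paper's argument.
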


\noindent The proof is deferred to Appendix~\ref{app:sparse-rad-tail}. This completes the proof of Theorem~\ref{thm:dinf-sparse-rademacher} (large $d$ behavior), with the upper bound following from Proposition~\ref{prop:cardinality-upper-bound} and the lower bound following from Proposition~\ref{prop:renyi-lower-bound}. More explicitly, Theorem~\ref{thm:noise-conditioning} gives the following lower bound for each $d$.

\begin{theorem}
\label{thm:sparse-rad-lower}
Let $d\ge 2$. Consider the sparse Rademacher prior $\cX_\spRade(\rho)$. Let $\lambda^*_{\spRade(\rho),d}$ be the largest $\lambda^*$ for which
$$\frac{(\lambda^*)^2}{2} \frac{t^d}{1+t^d} \le f_\rho(t) \qquad \forall t \in [0,1).$$
If $\lambda < \lambda^*_{\spRade(\rho),d}$ then strong detection is impossible. If furthermore $d \ge 3$, then weak detection and weak recovery are also impossible.
\end{theorem}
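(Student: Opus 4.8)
The plan is to derive this statement directly from Theorem~\ref{thm:noise-conditioning} applied to $\cX=\cX_\spRade(\rho)$; essentially all of the work is in checking its five hypotheses. For condition (i) I would observe that negating every coordinate of $x'$ preserves the law of $\cX_\spRade(\rho)$ while flipping the sign of $\langle x,x'\rangle$, so the overlap is symmetric about zero. For condition (ii) I would take $f_\rho$ as the rate function: Proposition~\ref{prop:sparse-tail-weak} gives $\prob{\langle x,x'\rangle\ge t}\lesssim n^{3/2}\exp(-nf_\rho(t))$, which by Remark~\ref{rem:tail-unif} is exactly the $\mathrm{poly}(n)$-prefactor form required by Definition~\ref{def:rate-function}, with $b_{n,\cX}(t)=f_\rho(t)-\frac1n\left(\frac32\log n+O(1)\right)\to f_\rho(t)$ uniformly on $[0,1)$; I would also note that $f_\rho$, being a minimum of a continuous function over a compact set, is finite and continuous. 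For condition (iii) I would simply invoke the definition of $\lambda^*_{\spRade(\rho),d}$: the set of admissible $\lambda^*$ forms a closed interval $[0,\lambda^*_{\spRade(\rho),d}]$ (closedness from continuity of $f_\rho$), so for any $\lambda<\lambda^*_{\spRade(\rho),d}$ I can pick $\lambda^*\in(\lambda,\lambda^*_{\spRade(\rho),d}]$ satisfying (iii) and run Theorem~\ref{thm:noise-conditioning} with that $\lambda^*$.

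The remaining hypotheses are the local-subgaussianity conditions (iv) and (v). For $d=2$ I would get both at once from Proposition~\ref{prop:chernoff-subg}, whose hypothesis---a prefactor-free local Chernoff bound $\prob{\langle x,x'\rangle\ge t}\lesssim\exp(-nf_\rho(t))$ on some interval $[0,T]$---is precisely Proposition~\ref{prop:sparse-tail-strong}; that proposition upgrades (iii) to (iv) with constant $\sigma^2=1/(\lambda^*)^2$, so that (v), namely $\lambda^*\le 1/\sigma$, holds with equality. For $d\ge3$ condition (v) is vacuous and I only need (iv), which again follows from Proposition~\ref{prop:sparse-tail-strong}: near $t=0$ one has $f_\rho(t)\gtrsim t^2$---the overlap $\langle x,x'\rangle$ has variance of order $1/n$, so $f_\rho$ has strictly positive curvature at the origin---hence $\prob{\langle x,x'\rangle\ge t}\lesssim\exp(-nf_\rho(t))\le\exp(-cnt^2)$ on a neighborhood of $0$, which is local subgaussianity with constant $1/(2c)$. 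Once all five hypotheses are verified, Theorem~\ref{thm:noise-conditioning} immediately gives $\WT(d,\lambda,\cX_\spRade(\rho))\contig\WT(d)$, hence impossibility of strong detection, for every $\lambda<\lambda^*_{\spRade(\rho),d}$; and for $d\ge3$ it additionally gives $\TV(\WT(d,\lambda,\cX_\spRade(\rho)),\WT(d))=o(1)$, ruling out weak detection and weak recovery as well.

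\textbf{Main obstacle.} Granting Theorem~\ref{thm:noise-conditioning} and the two tail bounds, this theorem is essentially bookkeeping; the real difficulty sits in the results it relies on. The first is the pair of tail estimates, and especially the prefactor-free Chernoff bound of Proposition~\ref{prop:sparse-tail-strong}: proving it requires a large-deviations computation that first conditions on the size $\zeta$ (as a fraction of $n$) of the support overlap of $x$ and $x'$---this is what produces the entropy term $G(\zeta)$ and the outer minimization over $\zeta$---and then runs a Rademacher-type Chernoff argument on the $\zeta n$ shared coordinates, with the added care needed to ensure the binomial counting factors contribute only a constant (not a polynomial) on $[0,T]$. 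The second is Theorem~\ref{thm:noise-conditioning} itself (the noise-conditioned second moment and the delicate small-$t$ estimate). Within the present proof, the single point that genuinely requires care is condition (v) in the $d=2$ case: there a $\sqrt2$-lossy tail bound would not suffice, which is exactly why Proposition~\ref{prop:sparse-tail-strong} must be stated without a polynomial prefactor and why it is routed through Proposition~\ref{prop:chernoff-subg} rather than the cruder Remark~\ref{rem:tail-unif}.
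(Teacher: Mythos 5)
Your proposal is correct and follows exactly the paper's route: the paper derives Theorem~\ref{thm:sparse-rad-lower} as an immediate consequence of Theorem~\ref{thm:noise-conditioning}, using Proposition~\ref{prop:sparse-tail-weak} (with Remark~\ref{rem:tail-unif}) to certify $f_\rho$ as the rate function and Proposition~\ref{prop:sparse-tail-strong} (routed through Proposition~\ref{prop:chernoff-subg}) for local subgaussianity with the sharp constant needed when $d=2$. You have simply made explicit the routine verifications that the paper leaves implicit.
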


We next consider the limit $\rho \to 0$, in which case the lower bound above has the following asymptotics.

\begin{proposition}
\label{prop:sparse-rad-p0}
Fix $d\ge 2$. The threshold $\lambda^*_{\spRade(\rho),d}$ from Theorem~\ref{thm:sparse-rad-lower} behaves as
$$\lambda^*_{\spRade(\rho),d} \ge 2\sqrt{-\rho \log \rho - O(\rho)}$$
in the limit $\rho \to 0$.
\end{proposition}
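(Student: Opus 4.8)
The plan is to exhibit, for a universal constant $C$ and all sufficiently small $\rho$, a valid value of the parameter in condition (iii) of Theorem~\ref{thm:sparse-rad-lower} (equivalently of Theorem~\ref{thm:noise-conditioning}): namely $\lambda^* = 2\sqrt{-\rho\log\rho - C\rho}$. Since Theorem~\ref{thm:sparse-rad-lower} \emph{defines} $\lambda^*_{\spRade(\rho),d}$ as the largest such $\lambda^*$, this gives the claim at once; for $d=2$ the conditions (iv)--(v) needed there are already supplied inside its proof by Propositions~\ref{prop:sparse-tail-strong} and~\ref{prop:chernoff-subg}, so only condition (iii) is at issue. Concretely I must show that, for $\rho$ small,
\[
\bigl(2(-\rho\log\rho) - 2C\rho\bigr)\,\frac{t^d}{1+t^d}\ \le\ f_\rho(t)\qquad\text{for all } t\in[0,1).
\]
The guiding heuristic is that the constraint is tightest near $t=1$: one has $f_\rho(1)=G(\rho)+\rho f_\Rade(1)=H(\rho)+\rho\log 2 = -\rho\log\rho+O(\rho)$, while $t\mapsto t^d/(1+t^d)$ is maximized at $t=1$ with value $\tfrac12$, so near $t=1$ both sides agree up to $O(\rho)$; the work is to exhibit slack elsewhere.

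I would split $[0,1)$ at $\gamma_\rho:=4/|\log\rho|$. On $[\gamma_\rho,1)$ I would use the crude bound $f_\rho(t)\ge -\rho t\log\rho - 2\rho$, valid for all $t\in[0,1]$ once $\rho$ is small: since $G$ is convex with minimum at $\zeta=\rho^2$ (i.e.\ $G''>0$, $G'(\rho^2)=0$), discarding the nonnegative $\zeta f_\Rade(\cdot)$ term in the formula for $f_\rho$ gives $f_\rho(t)\ge G(\rho t)$ for $t\ge\rho$, and $G(\rho t)\ge\rho t\log(t/\rho)-\rho\ge -\rho t\log\rho - 2\rho$ by lower-bounding $G$ by the binary KL-divergence in its first coordinate (for $t<\rho$ the stated bound is $\le 0$, hence vacuous). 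Writing $L:=|\log\rho|$ and $r:=t^d/(1+t^d)$, dividing by $\rho$ reduces the target on $[\gamma_\rho,1)$ to $L(t-2r)+2Cr\ge 2$, where $t-2r = t(1+t^d-2t^{d-1})/(1+t^d)\ge 0$ because $1+t^d-2t^{d-1}\ge 0$ on $[0,1]$ (it is $(1-t)^2$ when $d=2$, and strictly decreasing from $1$ to $0$ when $d\ge 3$). A three-way subdivision finishes: on $[\gamma_\rho,\tfrac14]$ one has $t-2r\ge t/2$ and $Lt/2\ge 2$; on $[\tfrac14,1-\eta]$ the quantity $t-2r$ has a positive lower bound, so $L(t-2r)\to\infty$; on $[1-\eta,1)$, $r$ is close to $\tfrac12$, so $2Cr\ge 2$ provided $\eta$ is small and $C\ge 2$.

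The delicate region is $[0,\gamma_\rho]$, where (using $t^d/(1+t^d)\le t^d$ and dropping the nonnegative $-2C\rho$) it suffices to prove $f_\rho(t)\ge 2\rho|\log\rho|\,t^d$. Here the crude bounds are too weak: the subgaussian estimate $f_\rho(t)\ge\rho t^2/2$ and the estimate $G(\zeta)\ge\zeta\log(\zeta/\rho^2)-\rho$ each carry an additive $O(\rho)$ error that is negligible for $f_\rho$ but becomes fatal once multiplied by $t^{-d}$. What works is to keep $f_\Rade(u)\ge u^2/2$ but use the \emph{sharp} convexity estimate $G(\rho^2 u)\ge\rho^2(u\log u-u+1)$ for $u\ge 1$ (and $G(\zeta)\ge(\rho^2-\zeta)^2/(2\rho^2)$ for $\zeta\le\rho^2$), obtained by integrating $G''(\zeta)\ge 1/\zeta$ twice from $\zeta=\rho^2$. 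This reduces $f_\rho(t)$ to the one-variable problem $\min_u\,[\rho^2(u\log u-u+1)+t^2/(2u)]$, whose minimizer, for $t$ of order at least $\rho$, sits at the left endpoint $u=t/\rho$ of the feasible range; one then reads off $f_\rho(t)\ge\tfrac14 t^2$ for $t\le e\rho$ and $f_\rho(t)\ge\tfrac14\rho t\log(t/\rho)$ for $e\rho\le t\le\gamma_\rho$. Substituting: for $t\le e\rho$ one needs $\tfrac14\ge 2\rho|\log\rho|\,t^{d-2}$, clear for small $\rho$; for $e\rho\le t\le\gamma_\rho$, since $t^{d-1}\le t$, it suffices that $\log(t/\rho)\ge 8|\log\rho|\,t$, and this holds for small $\rho$ because $t\mapsto\log(t/\rho)-8|\log\rho|\,t$ is concave and its values at the two endpoints, $1-8e\rho|\log\rho|$ and roughly $|\log\rho|-32$, are both positive.

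I expect the main obstacle to be exactly this sharp lower bound on $f_\rho$ in the transitional window $t\in[\rho,\,1/|\log\rho|]$, where $f_\rho$ crosses over from the ``Gaussian-tail'' regime ($f_\rho(t)$ of order $t^2$, for $t$ of order at most $\rho$) to the ``discrete-tail'' regime ($f_\rho(t)$ of order $-\rho t\log\rho$, for $t$ of order $1$): the bound must have \emph{no} spurious additive $O(\rho)$, which forces one to use the exact convexity structure of the entropy term $G$ rather than any off-the-shelf concentration inequality. Everything else is bookkeeping; the case $d\ge 3$ is strictly easier than $d=2$ (the $t^{-d}$ factor helps near $t=0$), and it is also the $d\ge 3$ clause of Theorem~\ref{thm:sparse-rad-lower} that upgrades the conclusion from ``strong detection impossible'' to ``weak detection and weak recovery impossible''.
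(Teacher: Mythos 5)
Your proposal is correct, and it reaches the conclusion by a genuinely different decomposition than the paper's. The paper first reduces to $d=2$ (using that $t\mapsto\frac{1+t^d}{t^d}$ only increases the objective as $d$ grows), then \emph{exchanges} the infimum over $t$ with the minimum over the support overlap $\zeta$, solves the inner $t$-optimization in closed form for each fixed $\zeta$, and splits into cases according to whether that optimum lands at the boundary $t=\zeta/\rho$ or in the interior; the boundary case is handled via the bound (\ref{eq:G-bound}) and a Lambert-$\mathcal{W}$ computation, the interior case by contradiction. You instead partition the $t$-axis at $4/|\log\rho|$ and supply region-specific lower bounds on $f_\rho$ itself: a linear-in-$t$ bound $f_\rho(t)\ge G(\rho t)\ge -\rho t\log\rho-2\rho$ on the outer region, where the elementary inequality $1+t^d-2t^{d-1}\ge 0$ furnishes the slack, and on the inner region the doubly-integrated convexity bound $G(\rho^2u)\ge\rho^2(u\log u-u+1)$, which plays exactly the role of the paper's (\ref{eq:G-bound}): both are sharp at $\zeta=\rho^2$ with no additive $O(\rho)$ error, which, as you correctly identify, is the crux of the whole argument. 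Your route avoids both the reduction to $d=2$ and the Lambert-$\mathcal{W}$ step, at the cost of a three-way sub-split of the outer region; the endpoint analysis of the convex one-variable problem $\min_u\bigl[\rho^2(u\log u-u+1)+t^2/(2u)\bigr]$ is a clean substitute for the paper's case split on where the $t$-optimum sits. Two constants need minor adjustment (on $[1-\eta,1)$ one has $r<\tfrac12$, so $2Cr\ge 2$ forces $C$ strictly above $2$, e.g.\ $C=3$ with $(1-\eta)^d\ge\tfrac12$; and the bound $f_\rho(t)\ge\tfrac14 t^2$ for $t\le e\rho$ comes out with a somewhat smaller constant), but neither matters since only positivity of these constants is used downstream.
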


\noindent We defer the proof to Appendix~\ref{app:sparse-rad-p0}. When combined with the upper bound of Proposition~\ref{prop:cardinality-upper-bound}, this completes the proof of Theorem~\ref{thm:p0-sparse-rademacher} ($\rho \to 0$ limit).

\section{Proof of Theorem~\ref{thm:noise-conditioning}}
\label{sec:noise-conditioning-proof}

\subsection{Overview}

Let $P_n = \WT_n(d,\lambda,\cX)$ be the spiked model $T = \lambda x^{\otimes d} + W$, and let $Q_n = \WT_n(d)$ be the corresponding unspiked model $T = W$. Recall that $x$ is drawn from a prior $\cX$ with $\|x\| = 1$. Fix a sequence $\delta = \delta(n)$ such that $\frac{1}{\sqrt n} \ll \delta \ll 1$; for concreteness, say $\delta = \frac{\log n}{\sqrt n}$. Define the `good' event $\Omega(x,T)$ by $|\langle T, x^{\otimes d} \rangle - \lambda| \le \delta$. Note that under $P_n$, $\langle T,x^{\otimes d} \rangle \sim \lambda + \mathcal{N}(0,2/n)$ and so $\Omega(x,T)$ occurs with probability $1-o(1)$ as $n \to \infty$.

Let $\tilde P_n$ be the conditional distribution of $P_n$ conditioned on $\Omega(x,T)$. As discussed in Section~\ref{sec:contig}, the idea of our proof is to compute the $\chi^2$-divergence and show that it is small ($o(1)$ for $d \ge 3$, $O(1)$ for $d = 2$ as $n \to \infty$); dropping the additive $-1$ for now, our goal is to compute the second moment $M \defeq \EE_{Q_n}\left[ \left(\dd[\tilde P_n]{Q_n}\right)^2 \right]$. We have

\begin{align*}
\dd[\tilde P_n]{Q_n}(T)
&= \frac{(1+o(1))\, \EE_{x \sim \cX} \one_{\Omega(x,T)} \exp(-\frac{n}{4} \langle T - \lambda x^{\otimes d},T - \lambda x^{\otimes d} \rangle)}{\exp(-\frac{n}{4} \langle T,T \rangle)} \\
&= (1+o(1))\,\EE_{x \sim \cX} \one_{\Omega(x,T)} \exp\left(\frac{n}{2} \langle T, \lambda x^{\otimes d} \rangle - \frac{n\lambda^2}{4} \right)
\end{align*}
and so
$$M \defeq \Ex_{Q_n} \left(\dd[\tilde P_n]{Q_n}\right)^2 = (1+o(1)) \Ex_{x,x' \sim \cX} m(x,x')$$
where $x,x'$ are drawn independently from $\cX$ and
\begin{equation}
\label{eq:defn-m}
m(x,x') = \Ex_{T \sim Q_n} \one_{\Omega(x,T)}\one_{\Omega(x',T)} \exp\left(\frac{n\lambda}{2} \langle T, x^{\otimes d} + x'^{\otimes d} \rangle - \frac{n\lambda^2}{2} \right).
\end{equation}
We will see that $m(x,x')$ only depends on $\beta \defeq \langle x,x' \rangle^d$ and so we will write $m(\beta) \defeq m(x,x')$. For technical reasons we will break down the computation of $M = (1+o(1))\,\EE_\beta \,m(\beta)$ into two parts depending on the value of $|\beta|$:
\begin{itemize}
\item $[\eps,1]$ interval: $M_1 = \EE_\beta[\one_{|\beta| \in [\eps,1]} \,m(\beta)]$
\item $[0,\eps)$ interval: $M_2 = \EE_\beta[\one_{|\beta| \in [0,\eps)} \,m(\beta)]$
\end{itemize}
where $\eps > 0$ is a small constant (depending on $\lambda$ but not $n$) to be chosen later. Note that $\beta \in [0,1]$ for $d$ even and $\beta \in [-1,1]$ for $d$ odd. In the subsequent subsections we bound $M_1$ and $M_2$ separately. We begin with $M_1$, which is the crux of the proof.

\subsection{Interval $[\eps,1]$}

For $x,x'$ fixed and $T \sim Q_n$, the joint distribution of $(y,y') \defeq (\langle T, x^{\otimes d} \rangle, \langle T, x'^{\otimes d} \rangle)$ is the bivariate Gaussian $\mathcal{N}(0,\Sigma)$ where
$$\Sigma = \frac{2}{n} \left(\begin{array}{cc} 1 & \beta \\ \beta & 1 \end{array}\right)$$
where, recall, $\beta = \langle x,x' \rangle^d$. This distribution has density $\frac{1}{2 \pi} |\Sigma|^{-1/2} \exp\left(-\frac{1}{2} y^\top \Sigma^{-1} y\right)$ where
$$|\Sigma| = \frac{4}{n^2}(1-\beta^2)$$
and
$$\Sigma^{-1} = \frac{n}{2} (1-\beta^2)^{-1} \left(\begin{array}{cc} 1 & -\beta \\ -\beta & 1 \end{array}\right).$$

\noindent This means we have
\begin{align*}
m(\beta) &= \int_{y} \int_{y'} \exp\left(\frac{n\lambda}{2} (y + y') - \frac{n\lambda^2}{2} \right) \frac{n}{4 \pi}(1-\beta^2)^{-1/2} \exp\left(-\frac{n}{4(1-\beta^2)}(y^2 + y'^2 - 2\beta yy')\right) \dee y' \dee y
\intertext{where the integral is over the square of `good' $y,y'$ values: $|y - \lambda| \le \delta$, $|y' - \lambda| \le \delta$. Currently there are problematic terms that blow up when $|\beta| = 1$. The following change of variables is a trick to eliminate this issue; an alternative approach would be to treat the $|\beta| \in [1-\eps,\eps]$ interval separately, bounding $m(\beta)$ by $m(1)$. Letting $u = \frac{1}{2}(y+y')$ and $v = \frac{1}{2}(y-y')$ yields}
&= \frac{n}{2\pi} (1-\beta^2)^{-1/2} \int_u \int_v \exp\left[n \lambda u - \frac{n\lambda^2}{2} - \frac{n}{2} \left(\frac{u^2}{1+\beta} + \frac{v^2}{1-\beta}\right)\right] \dee v\, \dee u.
\intertext{Relaxing the integration bounds for $v$ to all of $\mathbb{R}$, explicitly compute the integral over $v$:}
&\le \sqrt{\frac{n}{2\pi (1+\beta)}} \int_u \exp\left(n \lambda u - \frac{n\lambda^2}{2} - \frac{nu^2}{2(1+\beta)} \right) \dee u.
\intertext{Since $|u - \lambda| \le \delta$ we can write $u = \lambda + \Delta$ with $|\Delta| \le \delta$:}
&= \sqrt{\frac{n}{2\pi (1+\beta)}} \int_{|\Delta| \le \delta} \exp\left(\frac{n}{2}\cdot \frac{\beta}{1+\beta}\, (\lambda^2 + 2 \lambda \Delta) - \frac{n \Delta^2}{2(1+\beta)}\right) \dee u.
\intertext{Upper bound this expression by removing the last term inside the exponent (which is always negative) and bounding $\Delta$ by $\delta$:}
&\le \sqrt{\frac{n}{2\pi (1+\beta)}} \exp\left(\frac{n}{2}\cdot \frac{\beta}{1+\beta}\, (\lambda^2 \pm 2 \lambda \delta)\right) \dee u
\end{align*}
\noindent where $\pm$ has the same sign as $\beta$. By computing the derivative with respect to $\beta$ we see that for $|\beta| \ge \eps$ and for sufficiently large $n$, the above bound is increasing in $\beta$. This means that up to a factor of 2 we only need to consider positive $\beta$, in which case $1/\sqrt{1+\beta} \le 1$. Letting $\tilde \lambda = \sqrt{\lambda^2 + 2\lambda \delta} = \lambda + o(1)$, we now have
$$m(\beta) \lesssim m_1(\beta) \defeq \sqrt{n} \exp\left(\frac{n \tilde\lambda^2}{2} \frac{\beta}{1+\beta}\right).$$

Using $m_1$ we will now bound $M_1$, the contribution of the $|\beta| \in [\eps,1]$ values to the second moment $M$. We have
\begin{align*}
M_1 &\lesssim \EE_\beta[\one_{\beta \in [\eps,1]} \, m_1(\beta)] \\
&= \int_{r=0}^\infty \problr{\one_{\beta \in [\eps,1]} \, m_1(\beta) \ge r} \dee r \\
&= \int_{r=0}^\infty \problr{\beta \in [\eps,1] \;\text{ and }\; \sqrt{n} \exp\left(\frac{n \tilde\lambda^2}{2} \frac{\beta}{1+\beta}\right) \ge r} \dee r. \\
\intertext{The change of variables $r = n \exp\left(\frac{n \tilde\lambda^2}{2} \frac{t}{1+t}\right)$ yields}
&= \int_{t=-1}^\infty \problr{\beta \in [\eps,1] \;\text{ and }\; \beta \ge t} \exp\left(\frac{n \tilde\lambda^2}{2} \frac{t}{1+t} \right) \frac{n^{3/2} \tilde\lambda^2}{2(1+t)^2} \,\dee t \\
&\le \int_{t=-1}^\eps \problr{\beta \ge \eps} \exp\left(\frac{n \tilde\lambda^2}{2} \frac{t}{1+t} \right) \frac{n^{3/2} \tilde\lambda^2}{2(1+t)^2} \,\dee t \; + \\
&{}\qquad \int_{t=\eps}^{1} \problr{\beta \ge t} \exp\left(\frac{n \tilde\lambda^2}{2} \frac{t}{1+t} \right) \frac{n^{3/2} \tilde\lambda^2}{2(1+t)^2} \,\dee t.
\intertext{Recalling the definition of rate function (Definition~\ref{def:rate-function}) and explicitly evaluating the integral in the first term,}
&\lesssim \sqrt{n} \exp\left(-n f_{n,\cX}(\eps^{1/d}) + \frac{n \tilde\lambda^2}{2} \frac{\eps}{1+\eps} \right) \; + \\
&{}\qquad n^{3/2} \int_{t=\eps}^1 \exp\left(-n f_{n,\cX}(t^{1/d}) + \frac{n \tilde\lambda^2}{2} \frac{t}{1+t} \right) \dee t.
\end{align*}

Recall that by condition (iii) of the theorem hypothesis, we are guaranteed $\lambda < \lambda^*$ and $f_\cX(t) \ge \frac{(\lambda^*)^2}{2} \frac{t^d}{1+t^d}$ for all $t \in (0,1)$. Combining this with the uniform convergence $f_{n,x} \ge b_{n,\cX} \to f_\cX$ (see Definition~\ref{def:rate-function}) and the fact $\tilde\lambda = \lambda + o(1)$ yields the following: there exists a constant $\gamma > 0$ (depending on $\lambda,\eps$ but not on $n$) such that for sufficiently large $n$ and for all $t \in [\eps,1)$ we have $f_{n,\cX}(t^{1/d}) \ge \frac{\tilde\lambda^2}{2} \frac{t}{1+t} + \gamma$. It then follows that both terms above are bounded above by $n^{3/2} \exp(-n \gamma) = o(1)$ and so we have $M_1 = o(1)$.

Note that this last step cannot be extended to all $t \in [0,1)$ because $f_\cX(t) = \frac{\lambda^2}{2} \frac{t^d}{1+t^d} = 0$ at $t = 0$; for this reason the $[0,\eps)$ interval requires separate consideration.

\subsection{Interval $[0,\eps)$}

For this case, the argument above is not enough and we will need to use the local subgaussian condition to control the deviations near zero. However, we do not need to use noise conditioning here and will revert to the standard second moment as in prior work (e.g.\ \cite{mrz,bmvx,pwbm}).

Starting from the definition (\ref{eq:defn-m}) of $m$, we drop the conditioning on $\Omega$ and use the Gaussian moment-generating function to explicitly compute the expectation over $T \sim Q_n$:
\begin{align*}
m(\beta) &\le \Ex_{T \sim Q_n} \exp\left(\frac{n\lambda}{2} \langle T, x^{\otimes d} + x'^{\otimes d} \rangle - \frac{n\lambda^2}{2} \right) \\
&= \exp\left(\frac{1}{2}\cdot\frac{2}{n}\cdot\frac{n^2\lambda^2}{4} \langle  x^{\otimes d} + x'^{\otimes d}, x^{\otimes d} + x'^{\otimes d} \rangle - \frac{n\lambda^2}{2} \right) \\
&= \exp\left(\frac{n\lambda^2}{2} \langle x^{\otimes d}, x'^{\otimes d} \rangle \right) \\
&= \exp\left(\frac{n\lambda^2}{2} \beta \right) \defeq m_2(\beta). \\
\end{align*}

\noindent A helpful trick in performing this computation is to note that $\langle T, x^{\otimes d} \rangle = \langle T', x^{\otimes d} \rangle$ where $T'$ is the asymmetric precursor tensor with $\cN(0,2/n)$ entries. Note that $m_2$ is an increasing function of $\beta$. Letting $b = 1$ for $d$ even and $b = 2$ for $d$ odd, we have
\begin{align*}
M_2 &= \prob{\beta = 0} + b \, \EE_\beta[\one_{\beta \in (0,\eps)} \, m_2(\beta)] \\
&= \prob{\beta = 0} + b\int_{r=0}^\infty \problr{\one_{\beta \in (0,\eps)} \, m_2(\beta) \ge r} \dee r \\
&= \prob{\beta = 0} + b\int_{r=0}^\infty \problr{\beta \in (0,\eps) \;\text{ and }\; \exp\left(\frac{n \lambda^2}{2} \beta \right) \ge r} \dee r. \\
\intertext{The change of variables $r = \exp\left(\frac{n \lambda^2}{2} s\right)$ yields}
&= \prob{\beta = 0} + b\int_{s=-\infty}^\infty \problr{\beta \in (0,\eps) \;\text{ and }\; \beta \ge s} \exp\left(\frac{n \lambda^2}{2} s \right) \frac{n \lambda^2}{2} \,\dee s \\
&\le \prob{\beta = 0} + b\int_{s=-\infty}^0 \prob{\beta > 0} \exp\left(\frac{n \lambda^2}{2} s \right) \frac{n \lambda^2}{2} \,\dee s + b\int_{s=0}^{\eps} \problr{\beta \ge s} \exp\left(\frac{n \lambda^2}{2} s \right) \frac{n \lambda^2}{2} \,\dee s \\
&= 1 + \frac{n \lambda^2 b}{2} \int_{s=0}^{\eps} \prob{\langle x,x' \rangle \ge s^{1/d}} \exp\left(\frac{n \lambda^2}{2} s \right) \dee s \\
&\defeq 1 + M_2'.
\end{align*}
Here we have used the fact $\prob{\beta = 0} + b \prob{\beta > 0} = 1$ (since $\langle x,x' \rangle$ has a symmetric distribution).

Letting $s = t^d$,
\begin{align}
\nonumber M_2' &\lesssim n \int_{t=0}^{\eps^{1/d}} \prob{\langle x,x' \rangle \ge t} \exp\left(\frac{n\lambda^2}{2} t^d\right) t^{d-1} \dee t.
\intertext{Let $\eta > 0$, to be chosen later. Choose $\eps$ small enough so that we can apply the local subgaussian bound, obtaining}
&\lesssim n \int_{t=0}^{\eps^{1/d}} \exp\left(-\frac{n}{2\sigma^2+\eta}t^2 + \frac{n\lambda^2}{2} t^d\right) t^{d-1} \dee t.
\label{eq:t2td}
\end{align}

First consider the $d \ge 3$ case. Here the $t^2$ term dominates the $t^d$ term, and so (by choosing $\eps$ sufficiently small) we have, for some constant $c > 0$,
$$M_2' \lesssim n \int_{t=0}^{\eps^{1/d}} \exp\left(-nc t^2\right) t^{d-1} \dee t
\le n \int_{t=0}^\infty \exp\left(-nc t^2\right) t^{d-1} \dee t
= \frac{n}{2} (cn)^{-d/2}\, \Gamma(d/2) = o(1).$$

Now consider the $d=2$ case. Now the $t^2$ and $t^d$ terms in (\ref{eq:t2td}) are of the same order, so it is important that we have the correct subgaussian constant $\sigma^2$. By hypothesis (v) of the theorem, we are guaranteed that $\lambda < 1/\sigma$ and so by choosing $\eta$ small enough we again have have a constant $c > 0$ such that
$$M_2' \lesssim n \int_{t=0}^{\eps^{1/d}} \exp\left(-nc t^2\right) t \,\dee t
\le n \int_{t=0}^\infty \exp\left(-nc t^2\right) t \,\dee t
= \frac{1}{2c} = O(1).$$

\noindent In conclusion, we have shown $M_2 = 1 + o(1)$ for $d \ge 3$, and $M_2 = O(1)$ for $d = 2$.

\subsection{Proof of non-detection}

Combining the results from the previous subsections, we have that the $\chi^2$-divergence
$$\chi^2(\tilde P_n \dmid Q_n) \defeq M - 1 = (1+o(1)) (M_1 + M_2) - 1$$
is $o(1)$ for $d \ge 3$ and $O(1)$ for $d = 2$. As discussed in Section~\ref{sec:contig}, this has the following consequences. If $d \ge 3$ we have $\TV(\tilde P_n,Q_n) = o(1)$; since $\TV(P_n,\tilde P_n) = o(1)$ this implies $\TV(P_n,Q_n) = o(1)$, and so weak detection is impossible. For all $d \ge 2$ we have $\tilde P_n \contig Q_n$, implying $P_n \contig Q_n$, and so strong detection is impossible.

\subsection{Proof of non-recovery}

\label{sec:pf-non-recovery}

We will show that in the $d \ge 3$ case, weak recovery is impossible. In this case we have shown $\TV(P_n,Q_n) = o(1)$. Therefore, in order to show that weak recovery is impossible under $P_n$, it is sufficient to show that weak recovery is impossible under $Q_n$ (i.e.\ we need to show that weak recovery is impossible when given only knowledge of the prior $\cX$ and nothing else). Assume on the contrary that weak recovery is possible under $Q_n$; this means there is a deterministic unit vector $v$ such that $\Pr_{x \sim \cX} [|\langle v,x \rangle| \ge \eps] \ge \eps$ for some $\eps > 0$ not depending on $n$. We will show that this allows us to perform weak detection, contradicting the above. Given a sample $T$ from either $P_n$ or $Q_n$, consider the statistic $m = \langle v^{\otimes d},T \rangle$ where $v$ is defined above. When $T \sim Q_n$ we have $m \sim \cN(0,2/n)$, which is $o(1)$ with probability $1-o(1)$ as $n\to\infty$. If instead $T \sim P_n$ then $m = \lambda \langle v,x \rangle^d + \langle v^{\otimes d},W \rangle$. The second term is $o(1)$ with probability $1-o(1)$, but the first term exceeds $\lambda \eps^d = \Theta(1)$ in absolute value with probability $\eps$. We therefore have the following distinguisher which performs weak detection: if $|m| \ge \lambda \eps^d/2$, output `$P_n$'; otherwise guess randomly.

\section*{Acknowledgements}
The authors are indebted to Ankur Moitra for helpful discussions that motivated the project, for feedback on a draft of this paper, and for providing guidance throughout. We thank G\'erard Ben Arous, Yash Deshpande, and Jonathan Weed for helpful conversations. We thank Yash Deshpande and Thibault Lesieur for resolving an issue in the replica predictions section of the first version of this paper.

\bibliographystyle{alpha}
\bibliography{bib}

\newcommand{\etalchar}[1]{$^{#1}$}
\begin{thebibliography}{MRRW97}

\bibitem[AB{\v{C}}13]{abac}
Antonio Auffinger, G{\'e}rard {Ben Arous}, and Ji{\v{r}}{\'\i} {\v{C}}ern{\`y}.
\newblock Random matrices and complexity of spin glasses.
\newblock {\em Communications on Pure and Applied Mathematics}, 66(2):165--201,
  2013.

\bibitem[AGH{\etalchar{+}}14]{anandkumar2014tensor}
Animashree Anandkumar, Rong Ge, Daniel Hsu, Sham~M. Kakade, and Matus
  Telgarsky.
\newblock Tensor decompositions for learning latent variable models.
\newblock {\em Journal of Machine Learning Research}, 15(1):2773--2832, 2014.

\bibitem[A{\"\i}d13]{aidekon-branching}
Elie A{\"\i}d{\'e}kon.
\newblock Convergence in law of the minimum of a branching random walk.
\newblock {\em The Annals of Probability}, 41(3A):1362--1426, 2013.

\bibitem[BBP05]{bbp}
Jinho Baik, G{\'e}rard {Ben Arous}, and Sandrine P{\'e}ch{\'e}.
\newblock Phase transition of the largest eigenvalue for nonnull complex sample
  covariance matrices.
\newblock {\em Annals of Probability}, pages 1643--1697, 2005.

\bibitem[BDG01]{bdg-gff}
Erwin Bolthausen, Jean-Dominique Deuschel, and Giambattista Giacomin.
\newblock Entropic repulsion and the maximum of the two-dimensional harmonic.
\newblock {\em The Annals of Probability}, 29(4):1670--1692, 2001.

\bibitem[BDM{\etalchar{+}}16]{mi-proof}
Jean Barbier, Mohamad Dia, Nicolas Macris, Florent Krzakala, Thibault Lesieur,
  and Lenka Zdeborov{\'a}.
\newblock Mutual information for symmetric rank-one matrix estimation: A proof
  of the replica formula.
\newblock {\em arXiv preprint arXiv:1606.04142}, 2016.

\bibitem[BDZ14]{conv-branching}
Maury Bramson, Jian Ding, and Ofer Zeitouni.
\newblock Convergence in law of the maximum of nonlattice branching random
  walk.
\newblock {\em arXiv preprint arXiv:1404.3423}, 2014.

\bibitem[BDZ16]{conv-gaussian-field}
Maury Bramson, Jian Ding, and Ofer Zeitouni.
\newblock Convergence in law of the maximum of the two-dimensional discrete
  gaussian free field.
\newblock {\em Communications on Pure and Applied Mathematics}, 69(1):62--123,
  2016.

\bibitem[BGN11]{eig-vec}
Florent Benaych-Georges and Raj~Rao Nadakuditi.
\newblock The eigenvalues and eigenvectors of finite, low rank perturbations of
  large random matrices.
\newblock {\em Advances in Mathematics}, 227(1):494--521, 2011.

\bibitem[BHK{\etalchar{+}}16]{sos-clique}
Boaz Barak, Samuel~B. Hopkins, Jonathan Kelner, Pravesh~K. Kothari, Ankur
  Moitra, and Aaron Potechin.
\newblock A nearly tight sum-of-squares lower bound for the planted clique
  problem.
\newblock {\em arXiv preprint arXiv:1604.03084}, 2016.

\bibitem[BM11]{bm}
Mohsen {Bayati} and Andrea {Montanari}.
\newblock The dynamics of message passing on dense graphs, with applications to
  compressed sensing.
\newblock {\em IEEE Transactions on Information Theory}, 57(2):764--785, 2011.

\bibitem[BMNN16]{bmnn}
Jess Banks, Cristopher Moore, Joe Neeman, and Praneeth Netrapalli.
\newblock Information-theoretic thresholds for community detection in sparse
  networks.
\newblock In {\em 29th Annual Conference on Learning Theory}, pages 383--416,
  june 2016.

\bibitem[BMV{\etalchar{+}}17]{bmvvx}
Jess Banks, Cristopher Moore, Nicolas Verzelen, Roman Vershynin, and Jiaming
  Xu.
\newblock Information-theoretic bounds and phase transitions in clustering,
  sparse {PCA}, and submatrix localization.
\newblock {\em arXiv preprint arXiv:1607.05222}, 2017.

\bibitem[BMVX16]{bmvx}
Jess Banks, Cristopher Moore, Roman Vershynin, and Jiaming Xu.
\newblock Information-theoretic bounds and phase transitions in clustering,
  sparse {PCA}, and submatrix localization.
\newblock {\em arXiv preprint arXiv:1607.05222 (version 1)}, 2016.

\bibitem[Bra78]{bramson-branching}
Maury~D. Bramson.
\newblock Maximal displacement of branching brownian motion.
\newblock {\em Communications on Pure and Applied Mathematics}, 31(5):531--581,
  1978.

\bibitem[BZ12]{tightness-gaussian-field}
Maury Bramson and Ofer Zeitouni.
\newblock Tightness of the recentered maximum of the two-dimensional discrete
  gaussian free field.
\newblock {\em Communications on Pure and Applied Mathematics}, 65(1):1--20,
  2012.

\bibitem[CCK06]{chareka2006}
Patrick Chareka, Ottilia Chareka, and Sarah Kennedy.
\newblock Locally sub-gaussian random variables and the strong law of large
  numbers.
\newblock {\em Atlantic Electronic Journal of Mathematics}, 1(1):75--81, 2006.

\bibitem[CDMF09]{cdf}
Mireille Capitaine, Catherine Donati-Martin, and Delphine F{\'e}ral.
\newblock The largest eigenvalues of finite rank deformation of large wigner
  matrices: convergence and nonuniversality of the fluctuations.
\newblock {\em The Annals of Probability}, pages 1--47, 2009.

\bibitem[COP12]{catching-naesat}
Amin Coja-Oglan and Konstantinos Panagiotou.
\newblock Catching the k-naesat threshold.
\newblock In {\em Proceedings of the forty-fourth annual ACM symposium on
  Theory of computing}, pages 899--908. ACM, 2012.

\bibitem[COP13]{going-after-ksat}
Amin Coja-Oghlan and Konstantinos Panagiotou.
\newblock Going after the k-sat threshold.
\newblock In {\em Proceedings of the forty-fifth annual ACM symposium on Theory
  of computing}, pages 705--714. ACM, 2013.

\bibitem[COP16]{asymptotic-ksat}
Amin Coja-Oghlan and Konstantinos Panagiotou.
\newblock The asymptotic k-sat threshold.
\newblock {\em Advances in Mathematics}, 288:985--1068, 2016.

\bibitem[COV13]{chasing-k-colorability}
Amin Coja-Oghlan and Dan Vilenchik.
\newblock Chasing the k-colorability threshold.
\newblock In {\em Foundations of Computer Science (FOCS), 2013 IEEE 54th Annual
  Symposium on}, pages 380--389. IEEE, 2013.

\bibitem[COZ12]{condensation-hypergraph}
Amin Coja-Oghlan and Lenka Zdeborov{\'a}.
\newblock The condensation transition in random hypergraph 2-coloring.
\newblock In {\em Proceedings of the twenty-third annual ACM-SIAM symposium on
  Discrete Algorithms}, pages 241--250. SIAM, 2012.

\bibitem[CS92]{cs92}
Andrea Crisanti and H.-J. Sommers.
\newblock The spherical $p$-spin interaction spin glass model: the statics.
\newblock {\em Zeitschrift f{\"u}r Physik B Condensed Matter}, 87(3):341--354,
  1992.

\bibitem[DAM16]{dam}
Yash Deshpande, Emmanuel Abbe, and Andrea Montanari.
\newblock Asymptotic mutual information for the binary stochastic block model.
\newblock In {\em 2016 IEEE International Symposium on Information Theory
  (ISIT)}, pages 185--189. IEEE, 2016.

\bibitem[DBKP11]{duchenne2011tensor}
Olivier Duchenne, Francis Bach, In-So Kweon, and Jean Ponce.
\newblock A tensor-based algorithm for high-order graph matching.
\newblock {\em IEEE Transactions on Pattern Analysis and Machine Intelligence},
  33(12):2383--2395, 2011.

\bibitem[{\relax DLMF}]{dlmf}
{NIST Digital Library of Mathematical Functions}.
\newblock http://dlmf.nist.gov/, Release 1.0.11 of 2016-06-08.
\newblock Online companion to \cite{Olver:2010:NHMF}.

\bibitem[DM14]{sparse-pca-amp}
Yash Deshpande and Andrea Montanari.
\newblock Information-theoretically optimal sparse {PCA}.
\newblock In {\em IEEE International Symposium on Information Theory}, pages
  2197--2201. IEEE, 2014.

\bibitem[DMM09]{amp-cs}
David~L. Donoho, Arian Maleki, and Andrea Montanari.
\newblock Message-passing algorithms for compressed sensing.
\newblock {\em Proceedings of the National Academy of Sciences},
  106(45):18914--18919, 2009.

\bibitem[DMM10]{amp-mot}
David~L. {Donoho}, Arian {Maleki}, and Andrea {Montanari}.
\newblock Message passing algorithms for compressed sensing: I. motivation and
  construction.
\newblock {\em IEEE Information Theory Workshop (ITW)}, pages 115--144, 2010.

\bibitem[DPRZ04]{cover-times}
Amir Dembo, Yuval Peres, Jay Rosen, and Ofer Zeitouni.
\newblock Cover times for brownian motion and random walks in two dimensions.
\newblock {\em Annals of mathematics}, pages 433--464, 2004.

\bibitem[Eld16]{sam}
Sam Elder.
\newblock Bayesian adaptive data analysis guarantees from subgaussianity.
\newblock {\em arXiv preprint arXiv:1611.00065}, 2016.

\bibitem[FP07]{fp}
Delphine F{\'e}ral and Sandrine P{\'e}ch{\'e}.
\newblock The largest eigenvalue of rank one deformation of large {Wigner}
  matrices.
\newblock {\em Communications in mathematical physics}, 272(1):185--228, 2007.

\bibitem[Gar85]{gardner}
Ed~Gardner.
\newblock Spin glasses with $p$-spin interactions.
\newblock {\em Nuclear Physics B}, 257:747--765, 1985.

\bibitem[GSV05]{I-MMSE}
Dongning Guo, Shlomo Shamai, and Sergio Verd{\'u}.
\newblock Mutual information and minimum mean-square error in gaussian
  channels.
\newblock {\em IEEE Transactions on Information Theory}, 51(4):1261--1282,
  2005.

\bibitem[GT02]{gt-limit}
Francesco Guerra and Fabio~L. Toninelli.
\newblock The infinite volume limit in generalized mean field disordered
  models.
\newblock {\em arXiv preprint cond-mat/0208579}, 2002.

\bibitem[Gue03]{guerra}
Francesco Guerra.
\newblock Broken replica symmetry bounds in the mean field spin glass model.
\newblock {\em Communications in mathematical physics}, 233(1):1--12, 2003.

\bibitem[HSS15]{sos-tensor}
Samuel~B. Hopkins, Jonathan Shi, and David Steurer.
\newblock Tensor principal component analysis via sum-of-square proofs.
\newblock In {\em Proceedings of The 28th Conference on Learning Theory, COLT},
  pages 3--6, 2015.

\bibitem[Jan95]{janson}
Svante Janson.
\newblock Random regular graphs: asymptotic distributions and contiguity.
\newblock {\em Combinatorics, Probability and Computing}, 4(04):369--405, 1995.

\bibitem[JM13]{jm}
Adel {Javanmard} and Andrea {Montanari}.
\newblock State evolution for general approximate message passing algorithms,
  with applications to spatial coupling.
\newblock {\em Information and Inference}, 2(2):115--144, 2013.

\bibitem[Joh01]{johnstone}
Iain~M. Johnstone.
\newblock On the distribution of the largest eigenvalue in principal components
  analysis.
\newblock {\em The Annals of Statistics}, 29(2):295--327, 2001.

\bibitem[KM09]{km}
Satish~Babu Korada and Nicolas Macris.
\newblock Exact solution of the gauge symmetric p-spin glass model on a
  complete graph.
\newblock {\em Journal of Statistical Physics}, 136(2):205--230, 2009.

\bibitem[KXZ16]{mi}
Florent Krzakala, Jiaming Xu, and Lenka Zdeborov{\'a}.
\newblock Mutual information in rank-one matrix estimation.
\newblock {\em arXiv preprint arXiv:1603.08447}, 2016.

\bibitem[LC60]{lecam}
Lucien Le~Cam.
\newblock {\em Locally Asymptotically Normal Families of Distributions. Certain
  Approximations to Families of Distributions and Their Use in the Theory of
  Estimation and Testing Hypotheses}.
\newblock Berkeley \& Los Angeles, 1960.

\bibitem[LKZ15]{mmse-lowrank}
Thibault Lesieur, Florent Krzakala, and Lenka Zdeborov{\'a}.
\newblock Mmse of probabilistic low-rank matrix estimation: Universality with
  respect to the output channel.
\newblock In {\em 2015 53rd Annual Allerton Conference on Communication,
  Control, and Computing (Allerton)}, pages 680--687. IEEE, 2015.

\bibitem[LM16]{lelarge-limits-lowrank}
Marc Lelarge and L{\'e}o Miolane.
\newblock Fundamental limits of symmetric low-rank matrix estimation.
\newblock {\em arXiv preprint arXiv:1611.03888}, 2016.

\bibitem[MM09]{mm-book}
Marc M{\'e}zard and Andrea Montanari.
\newblock {\em Information, physics, and computation}.
\newblock Oxford University Press, 2009.

\bibitem[MNS15]{mns}
Elchanan Mossel, Joe Neeman, and Allan Sly.
\newblock Reconstruction and estimation in the planted partition model.
\newblock {\em Probability Theory and Related Fields}, 162(3-4):431--461, 2015.

\bibitem[MR16]{nonneg-pca}
Andrea Montanari and Emile Richard.
\newblock Non-negative principal component analysis: Message passing algorithms
  and sharp asymptotics.
\newblock {\em IEEE Transactions on Information Theory}, 62(3):1458--1484,
  2016.

\bibitem[MRRW97]{1-fact}
Michael S.~O. Molloy, Hanna Robalewska, Robert~W. Robinson, and Nicholas~C.
  Wormald.
\newblock 1-factorizations of random regular graphs.
\newblock {\em Random Structures and Algorithms}, 10(3):305--321, 1997.

\bibitem[MRZ15]{mrz}
Andrea Montanari, Daniel Reichman, and Ofer Zeitouni.
\newblock On the limitation of spectral methods: From the gaussian hidden
  clique problem to rank-one perturbations of gaussian tensors.
\newblock In {\em Advances in Neural Information Processing Systems}, pages
  217--225, 2015.

\bibitem[OLBC10]{Olver:2010:NHMF}
F.~W.~J. Olver, D.~W. Lozier, R.~F. Boisvert, and C.~W. Clark, editors.
\newblock {\em {NIST Handbook of Mathematical Functions}}.
\newblock Cambridge University Press, New York, NY, 2010.
\newblock Print companion to \cite{dlmf}.

\bibitem[OMH13]{omh}
Alexei Onatski, Marcelo~J. Moreira, and Marc Hallin.
\newblock Asymptotic power of sphericity tests for high-dimensional data.
\newblock {\em The Annals of Statistics}, 41(3):1204--1231, 2013.

\bibitem[P{\'e}c06]{peche}
Sandrine P{\'e}ch{\'e}.
\newblock The largest eigenvalue of small rank perturbations of {Hermitian}
  random matrices.
\newblock {\em Probability Theory and Related Fields}, 134(1):127--173, 2006.

\bibitem[PWBM16]{pwbm}
Amelia Perry, Alexander~S. Wein, Afonso~S. Bandeira, and Ankur Moitra.
\newblock Optimality and sub-optimality of {PCA} for spiked random matrices and
  synchronization.
\newblock {\em arXiv preprint arXiv:1609.05573}, 2016.

\bibitem[RM14]{rm}
Emile Richard and Andrea Montanari.
\newblock A statistical model for tensor {PCA}.
\newblock In {\em Advances in Neural Information Processing Systems}, pages
  2897--2905, 2014.

\bibitem[RW94]{rw-ham}
Robert~W. Robinson and Nicholas~C. Wormald.
\newblock Almost all regular graphs are {Hamiltonian}.
\newblock {\em Random Structures \& Algorithms}, 5(2):363--374, 1994.

\bibitem[Sub15]{subag-pspin}
Eliran Subag.
\newblock The complexity of spherical p-spin models - a second moment approach.
\newblock {\em arXiv preprint arXiv:1504.02251}, 2015.

\bibitem[Tal06a]{tal06}
Michel Talagrand.
\newblock Free energy of the spherical mean field model.
\newblock {\em Probability Theory and Related Fields}, 134(3):339--382, 2006.

\bibitem[Tal06b]{tal06parisi}
Michel Talagrand.
\newblock The parisi formula.
\newblock {\em Annals of Mathematics}, pages 221--263, 2006.

\bibitem[Wor99]{wor-survey}
Nicholas~C. Wormald.
\newblock Models of random regular graphs.
\newblock {\em London Mathematical Society Lecture Note Series}, pages
  239--298, 1999.

\bibitem[ZK15]{statmech-survey}
Lenka Zdeborov{\'a} and Florent Krzakala.
\newblock Statistical physics of inference: Thresholds and algorithms.
\newblock {\em arXiv preprint arXiv:1511.02476}, 2015.

\end{thebibliography}

\appendix

\section{Asymptotics for the spherical prior}\label{app:asymptotics}

Here we will derive the asymptotic expressions given in Section~\ref{sec:summary} for various bounds for the spherical prior.

\subsection{Lower bound $\lambda^*_{\sph,d}$}
Consider the threshold $\lambda \defeq \lambda^*_{\sph,d}$ from Theorem~\ref{thm:sph-lower}, i.e.\
$\lambda$ takes on the supremal value for which
$$\frac{\lambda^2}{2} \frac{t^d}{1+t^d} \leq -\frac12 \log(1-t^2)$$
for all $t \in [0,1)$. This inequality is not tight as $t \to 0$, considering the derivatives at $0$, nor as $t \to 1$, where the right-hand side becomes infinite while the left remains bounded. Hence there exists some point $t \in (0,1)$ where $\lambda^2 \frac{t^d}{1+t^d}$ and $-\log(1-t^2)$ agree in value and derivative:
$$ \lambda^2 \frac{t^d}{1+t^d} = - \log(1-t^2), \qquad d \lambda^2 t^{d-1} (1+t^d)^{-2} = \frac{t}{1-t^2}. $$
The latter equation yields $\lambda^2 = \frac{2 (1+t^d)^2}{d(1-t^2)t^{d-2}}$. Substituting into the first equation, we have
\begin{equation} 0 = \frac{2}{d} (1+t^d) \frac{t^2}{1-t^2} + \log(1-t^2). \label{eq:sph-problem}\end{equation}
This equation has a unique solution on $(0,1)$: if we divide by $\log(1-t^2)$, we have monotonicity as the derivative is negative:
\begin{align*}
\frac{2t(2t^2(1+t^d)+(2+t^d(2+d(1-t^2)))\log(1-t^2))}{d(1-t^2)^2 \log^2(1-t^2)} &\leq \frac{2t(2t^2(1+t^d)+(2+t^d(2+d(1-t^2)))(-t^2))}{d(1-t^2)^2 \log^2(1-t^2)} \\
&= \frac{-t^{2+d}}{(1-t^2)\log^2(1-t^2)} < 0.
\end{align*}

\noindent Suppose that we evaluate (\ref{eq:sph-problem}) at the value
$$ t = 1 - \frac{1}{c_1 d \log d  + c_2 d \log \log d + c_3 d + \eps(d)}, $$
with $\eps(d) = o(d)$ as $d \to \infty$. We note the expansions
$$ 1-t^2 = \frac{2}{c_1 d \log d + c_2 d \log \log d + c_3 d + \eps(d)} + O\left(\frac{1}{d^2 \log^2 d}\right), \quad 1+t^d = 2 - \frac{1}{c_1 \log d} + o\left(\frac{1}{\log d}\right). $$
Expanding (\ref{eq:sph-problem}), we obtain the equation
\begin{align*}
0 &= c_1 \log d - \frac12 \log d + c_2 \log \log d - \frac12 \log \log d + c_3 - \frac12 + \frac12 \log 2 - \frac12 \log c_1 + \eps(d) + O\left(\frac{\log \log d}{\log d}\right) \\
&= \eps(d) + O\left(\frac{\log \log d}{\log d}\right) \qquad \text{if $c_1 = c_2 = \frac12$, $c_3 = \frac12 - \log 2$.}
\end{align*}
With this choice of coefficients, the right-hand side is positive for a suitably large choice of $\eps(d) = o(d)$, and negative for a suitably small choice. By the intermediate value theorem, it follows that the unique solution of this equation in $(0,1)$ satisfies the asymptotics
$$ t = 1 - \frac{2}{d \log d + d \log \log d + 1 - 2 \log 2 + o(1)}, $$
which in turn implies:
\begin{align*}
    \lambda^2 &= \frac{2(1+t^d)^2}{d(1-t^2)t^{d-2}} \\
    &= \left( \frac12 \log d + \frac12 \log \log d + \frac12 + o(1) \right)\left(2 - \frac{2}{\log d} + o(1/\log d)\right)^2\left(1+\frac{2}{\log d}+o(1/\log d)\right) \\
    &= 2 \log d + 2 \log \log d + 2 - 4 \log 2 + o(1).
\end{align*}

\subsection{Injective norm $\mu_d$}

Recall from Theorem~\ref{thm:inj-norm} that the injective norm $\mu_d$ of $\WT(d)$ is given by $\mu_d = x\sqrt{2/d}$, where $x \geq 2\sqrt{d-1}$ is the unique solution to
$$ 0 = g(z) \defeq \frac{2-d}{d} - \log\left(\frac{dz^2}{2}\right) + \frac{d-1}{2} z^2 - \frac{2}{d^2 z^2}, \qquad z(x) = \frac{1}{(d-1)\sqrt{2d}} \left(x - \sqrt{x^2 - 4(d-1)}\right). $$
We will locate this root asymptotically. Let $x = \sqrt{d \log d + d \log \log d + d c}$, with $c = O(1)$. It is easily computed that
$$ z^2 = \frac{d^2}{2 (\log d + \log \log d + c - 2 + o(1))}, $$
so that $g(x) = 1 - c + o(1)$. Hence the unique root is located with $c = 1 + o(1)$, so that
$$ \mu_d^2 = 2 \log d + 2 \log \log d + 2 + o(1). $$

\subsection{Upper bound $\Lambda^*_{\sph,d}$}

We now develop an asymptotic understanding of the upper bound $\Lambda^*_{\sph,d}$ provided in Theorem~\ref{thm:sph-upper} and Corollary~\ref{cor:sph-upper}. Recall:

\begin{reptheorem}{thm:sph-upper}
Fix $d \ge 3$ and $\lambda \ge 0$. Let $\cX$ be any prior supported on the unit sphere in $\RR^d$. Let $T = \lambda x^{\otimes d} + W$ be a spiked tensor drawn from $\WT(d,\lambda,\cX)$. With probability $1-o(1)$ we have
$$\|T\| \ge L_d(\lambda) - o(1)$$
where
\begin{equation} L_d(\lambda) = \max_{m \in [0,1]} m^d \left(\lambda + \sqrt{\frac{2d}{d-1}} \sqrt{M (1+M)}\right), \qquad M(m) = (d-1) \frac{1-m^2}{m^2}.\end{equation}
\end{reptheorem}

\begin{repcorollary}{cor:sph-upper}
Fix $d \ge 3$ and $\lambda \ge 0$. Let $\cX$ be any prior supported on the unit sphere in $\RR^d$. Let $\mu_d$ be given by Theorem~\ref{thm:inj-norm} and let $L_d(\lambda)$ be defined as in Theorem~\ref{thm:sph-upper}. If $L_d(\lambda) > \mu_d$ then strong detection and weak recovery are possible for $\WT(d,\lambda,\cX)$.
\end{repcorollary}

Our main result will be the following:
\begin{claim}
Assuming $\lambda = \sqrt{2 \log d} + o(\sqrt{\log d})$, we have $L_d(\lambda)^2 - \lambda^2 \geq 2 + o(1)$ as $d \to \infty$.
\end{claim}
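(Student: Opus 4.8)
The plan is to lower-bound $L_d(\lambda)$ by evaluating the maximand in (\ref{eq:upper-opt}) at a single well-chosen value of $m$; since $L_d(\lambda)$ is the maximum over $m \in [0,1]$, this gives a valid lower bound. Mirroring the $\lambda + 1/\lambda$ formula of the matrix BBP transition (Theorem~\ref{thm:bbp-wig}), the right regime is to take $m$ just below $1$ so that $M(m) \to 0$ at rate $1/\lambda^2$; concretely I would take the unique $m \in (0,1)$ with $M(m) = 2/\lambda^2$, that is, $m^2 = \frac{(d-1)\lambda^2}{(d-1)\lambda^2 + 2}$. That $2/\lambda^2$ is the correct choice, and that the resulting constant is $2$, can be seen by writing $M(m) = a/\lambda^2$ and expanding: the maximand at such an $m$ equals $\lambda + \frac{2\sqrt{2a} - a}{2\lambda} + o(1/\lambda)$, and $2\sqrt{2a} - a$ is maximized over $a > 0$ at $a = 2$, where it equals $2$.

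With this $m$, I would bound the factors of the maximand separately, using only $\log(1+x) \le x$, $e^{-x} \ge 1-x$, $\sqrt{1+x} \ge 1$, and the observation that $\lambda = \sqrt{2\log d} + o(\sqrt{\log d})$ forces both $\lambda \to \infty$ and $\lambda^2 = \Theta(\log d)$, hence $1/d = o(1/\lambda^2)$. First, $m^d = \left(1 + \frac{2}{(d-1)\lambda^2}\right)^{-d/2} \ge \exp\left(-\frac{d}{(d-1)\lambda^2}\right) \ge 1 - \frac{d}{(d-1)\lambda^2} = 1 - \frac{1}{\lambda^2}(1 + o(1))$. Second, since $M = 2/\lambda^2$ exactly, $\sqrt{\frac{2d}{d-1}}\,\sqrt{M(1+M)} = \frac{2}{\lambda}\sqrt{\frac{d}{d-1}}\sqrt{1 + \frac{2}{\lambda^2}}$, which is $\ge \frac{2}{\lambda}$ and also equals $\frac{2}{\lambda}(1 + o(1))$.

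Combining these, $L_d(\lambda) \ge m^d\left(\lambda + \sqrt{\frac{2d}{d-1}}\sqrt{M(1+M)}\right) \ge \left(1 - \frac{1}{\lambda^2}(1+o(1))\right)\left(\lambda + \frac{2}{\lambda}\right) = \lambda + \frac{1}{\lambda} - o(1/\lambda)$, and squaring gives $L_d(\lambda)^2 \ge \lambda^2 + 2 - o(1)$, which is the asserted bound. I expect the only real work to be the bookkeeping in this last step: one must confirm that after squaring, the cross term contributes exactly $2$ while the $1/\lambda^2$ self-term and the various $O(1/d)$ and $O(1/\lambda^3)$ errors absorbed above are all $o(1)$. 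Both hypotheses on $\lambda$ are used here: $\lambda \to \infty$ so that $1/\lambda \to 0$, and $\lambda = \Theta(\sqrt{\log d})$ so that the dimension error $1/d$ is negligible relative to the $1/\lambda^2$ terms being tracked.
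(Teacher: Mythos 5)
Your proposal is correct and follows essentially the same route as the paper: evaluate the maximand of (\ref{eq:upper-opt}) at a single $m$ with $M(m) \asymp 1/\log d$ and extract the constant $2$; your choice $M = 2/\lambda^2$ is exactly the paper's optimal parameter (the paper writes $M \approx 2/(c\log d)$, optimizes $-4/c + 4\sqrt{2/c}$, and lands on $c=2$, which with $\lambda^2 \sim 2\log d$ is your $a=2$). The only difference is cosmetic: you plug in the optimizer directly and use one-sided elementary inequalities to get $L_d(\lambda) \ge \lambda + (1-o(1))/\lambda$ before squaring, rather than carrying out the asymptotic expansion of $L_d(\lambda)^2 - \lambda^2$ and optimizing at the end.
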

\noindent Recalling from the discussion above that $\mu_d^2 = 2 \log d + 2 \log \log d + 2 + o(1)$, it follows that $L_d(\lambda)$ exceeds $\mu_d$ as soon as $\lambda > 2 \log d + 2 \log \log d + o(1)$; at this point, reliable detection is possible according to Corollary~\ref{cor:sph-upper}. This expansion matches the replica prediction of the next section. We prove the claim:
\begin{proof}
To produce such a bound, we can choose a value of $m$; thus take $m = \exp(-c/d\log d)$, with $c$ a constant to be chosen later. Then $M = \frac{2}{c \log d} + O(d^{-1})$, and one computes that
\begin{align*}
    L_d(\lambda)^2 - \lambda^2 &\geq m^{2d} \left( \lambda^2 + 2 \sqrt{2} \lambda \sqrt{\frac{d}{d-1}} \sqrt{M(1+M)} + 2 \frac{d}{d-1} M(1+M) \right) - \lambda^2 \\
    &= \left( 1 - \frac{2}{c \log d}  + O(\log^{-2} d) \right) \left(\lambda^2 + 2 \sqrt{2} \lambda \sqrt{\frac{2}{c \log d}} + \frac{4}{c \log d} + o(1) \right) - \lambda^2 \\
    &= \frac{-2\lambda^2}{c \log d} + \frac{4\lambda}{\sqrt{c \log d}} + o(1) \\
    &= \frac{-4}{c} + 4 \sqrt{2/c},
\end{align*}
which is maximized at $c=2$ with a value of $2$.
\end{proof}
\noindent We conclude that the limiting (as $n \to \infty$, in probability) injective norm of a spiked tensor exceeds that of an unspiked tensor, thus guaranteeing reliable detection, as soon as $\lambda^2 > 2 \log d + 2 \log \log d + o(1)$.

\section{Replica predictions}\label{app:replica}

In this appendix we give a non-rigorous prediction for the exact statistical threshold for spiked tensor problems (as $n \to \infty$) with any fixed $d$, using the replica method from statistical physics (see \cite{mm-book} for an introduction). We expect the threshold predicted here to be correct for both strong detection and weak recovery, and when $d \ge 3$, also weak detection. Although the methods used here are non-rigorous, we have high confidence in their correctness because replica predictions have been rigorously shown to be correct in various related settings (e.g.\ \cite{tal06parisi,tal06,mi,mi-proof,lelarge-limits-lowrank}) using methods such as Guerra interpolation \cite{guerra} and the approximate message passing (AMP) framework \cite{amp-cs,amp-mot,bm,jm}.

We consider both the Rademacher prior and the spherical prior. (The sparse Rademacher prior can be approached using the same techniques; see also \cite{mmse-lowrank} for the solution to the $d=2$ case using different methods.) It will be more convenient to rescale the observations as
$$ T = \frac{\lambda d!}{2 n^{d-1}} x^{\otimes d} + W, $$
where $\|x\|^2 = n$ and the typical (distinct-index) entries of $W$ are distributed as $\cN(0,\frac{d!}{2n^{d-1}})$. With respect to the normalization in previous sections, this scaling preserves the meaning of the signal-to-noise ratio $\lambda$, while matching the noise scaling conventions of \cite{gardner} (on which the following computation will be based).

\subsection{Rademacher prior}

The replica calculation for this problem in the unspiked setting can be found in \cite{gardner}. This section adapts the computation to the spiked case. We observe the spiked tensor $T$ as above where each $x_i$ is uniformly $\pm 1$. Ignoring lower order terms (corresponding to non-distinct indices), the posterior distribution of $x$ given $T$ is
$$\prob{x \,|\, T} \propto \prod_{i_1 < \cdots < i_d} \exp\left(-\frac{n^{d-1}}{d!}(T_{i_1 \cdots i_d} - \frac{\lambda d!}{2n^{d-1}} x_{i_1} \cdots x_{i_d})^2\right)
\propto \exp\left(\lambda \sum_{i_1 < \cdots < i_d} T_{i_1 \cdots i_d} x_{i_1} \cdots x_{i_d} \right)$$
and so we are interested in the Boltzmann distribution over $\sigma \in \{\pm 1\}^n$ given by $\prob{\sigma \,|\, T} \propto \exp(-\beta H(\sigma))$ with energy $H(\sigma) = -\sum_{i_1 < \cdots < i_d} T_{i_1 \cdots i_d} \sigma_{i_1} \cdots \sigma_{i_d}$ and inverse temperature $\beta = \lambda$.

The goal is to compute the free energy density, defined as $f = -\frac{1}{\beta n} \EE \log Z$ where $$Z = \sum_{\sigma \in \{\pm 1\}^n} \exp(-\beta H(\sigma)).$$
The idea of the replica method is to instead compute the moments $\EE[Z^r]$ of $Z$ for $r \in \mathbb{N}$ and perform the (non-rigorous) analytic continuation
\begin{equation}
\label{eq:replica-trick}
\EE[\log Z] = \lim_{r \to 0} \frac{1}{r} \log \EE[Z^r].
\end{equation}

The moment $\EE[Z^r]$ can be expanded in terms of $r$ `replicas' $\sigma^1,\ldots,\sigma^r$ with $\sigma^a \in \{\pm 1\}^n$:
$$\EE[Z^r] = \sum_{\{\sigma^a\}} \EE\exp\left(\beta \sum_{i_1 < \cdots < i_d} T_{i_1 \cdots i_d} \sum_{a=1}^r \sigma_{i_1}^a \cdots \sigma_{i_d}^a\right).$$
After applying the definition of $T$ and the Gaussian moment-generating function (to compute expectation over the noise $W$) we arrive at
$$\EE[Z^r] = \sum_{\{\sigma^a\}} \exp\left[n\left(\frac{\lambda^2}{2} \sum_a c_a^d + \frac{\lambda^2}{4} \sum_{a,b} q_{ab}^d \right)\right]$$
where $q_{ab} = \frac{1}{n} \sum_i \sigma_i^a \sigma_i^b$ is the correlation between replicas $a$ and $b$, and $c_a = \frac{1}{n} \sum_i \sigma_i^a x_i$ is the correlation between replica $a$ and the truth.

Without loss of generality we can assume the true spike is $x = \one$ (all-ones). Let $Q$ be the $(r+1)\times (r+1)$ matrix of overlaps ($q_{ab}$ and $c_a$), including $x$ as the zeroth replica. Note that $Q$ is the average of $n$ \iid matrices and so by the theory of large deviations (Cram\'er's Theorem in multiple dimensions), the number of configurations $\{\sigma^a\}$ corresponding to given overlap parameters $q_{ab},c_a$ is asymptotically
\begin{equation}
\label{eq:rad-entropy}
\inf_{\mu,\nu} \exp\left[n\left(-\sum_a \nu_a c_a - \frac{1}{2} \sum_{a \ne b} \mu_{ab} q_{ab} + \log \sum_{\sigma \in \{\pm 1\}^r}\exp\left(\sum_a \nu_a \sigma_a + \frac{1}{2}\sum_{a \ne b} \mu_{ab} \sigma_a \sigma_b\right)\right)\right].
\end{equation}

We now apply the saddle point method: in the large $n$ limit, the expression for $\EE[Z^r]$ should be dominated by a single value of the overlap parameters $q_{ab},c_a$. This yields
$$\frac{1}{n} \log \EE[Z^r] = -G(q_{ab}^*,c_a^*,\mu_{ab}^*,\nu_a^*)$$
where $(q_{ab}^*,c_a^*,\mu_{ab}^*,\nu_a^*)$ is a critical point of
\begin{align*}
G(q_{ab},c_a,\mu_{ab},\nu_a) = &-\frac{\lambda^2}{2} \sum_a c_a^d - \frac{\lambda^2}{4} \sum_{a,b}  q_{ab}^d \\
&+ \sum_a \nu_a c_a + \frac{1}{2} \sum_{a \ne b} \mu_{ab} q_{ab} - \log \sum_{\sigma \in \{\pm 1\}^r}\exp\left(\sum_a \nu_a \sigma_a + \frac{1}{2}\sum_{a \ne b} \mu_{ab} \sigma_a \sigma_b\right).
\end{align*}

We next assume that the dominant saddle point takes a particular form: the so-called replica symmetric ansatz. The validity of this assumption is justified by a phenomenon in statistical physics: there is no static replica symmetry breaking on the Nishimori line (see e.g.\ \cite{statmech-survey}). The replica symmetric ansatz is given by $q_{aa} = 1$, $c_a = c$, $\nu_a = \nu$, and for $a \ne b$, $q_{ab} = q$ and $\mu_{ab} = \mu$ for constants $q,c,\mu,\nu$. This yields
$$\lim_{r \to 0} \frac{1}{r} G(q,c,\mu,\nu) = -\frac{\lambda^2}{2} c^d - \frac{\lambda^2}{4} + \frac{\lambda^2}{4} q^d + \nu c - \frac{1}{2} \mu(q-1) - \Ex_{z \sim \cN(0,1)} \log(2 \cosh(\nu + \sqrt{\mu} z))$$
where the last term is derived by first using the Gaussian moment-generating function to obtain the expression
$$\lim_{r \to 0} \frac{1}{r} \log \EE_z \sum_\sigma \exp((\nu + \sqrt{\mu} z)\sum_a \sigma_a) = \lim_{r \to 0} \frac{1}{r} \log \EE_z (2 \cosh(\nu + \sqrt{\mu}z))^r$$
and then applying the replica trick (\ref{eq:replica-trick}).

We next find the critical points by setting the derivatives of $G$ (with respect to all four variables) to zero, which yields
$$\nu = \frac{\lambda^2}{2} d c^{d-1}, \qquad \mu = \frac{\lambda^2}{2} dq^{d-1}, \qquad c = \EE_z \tanh(\nu + \sqrt{\mu}z), \qquad q = \EE_z \tanh^2(\nu + \sqrt{\mu}z).$$

\begin{figure}[!ht]
    \centering
    \begin{subfigure}[t]{0.47\textwidth}
        \centering
        \includegraphics[width=\linewidth]{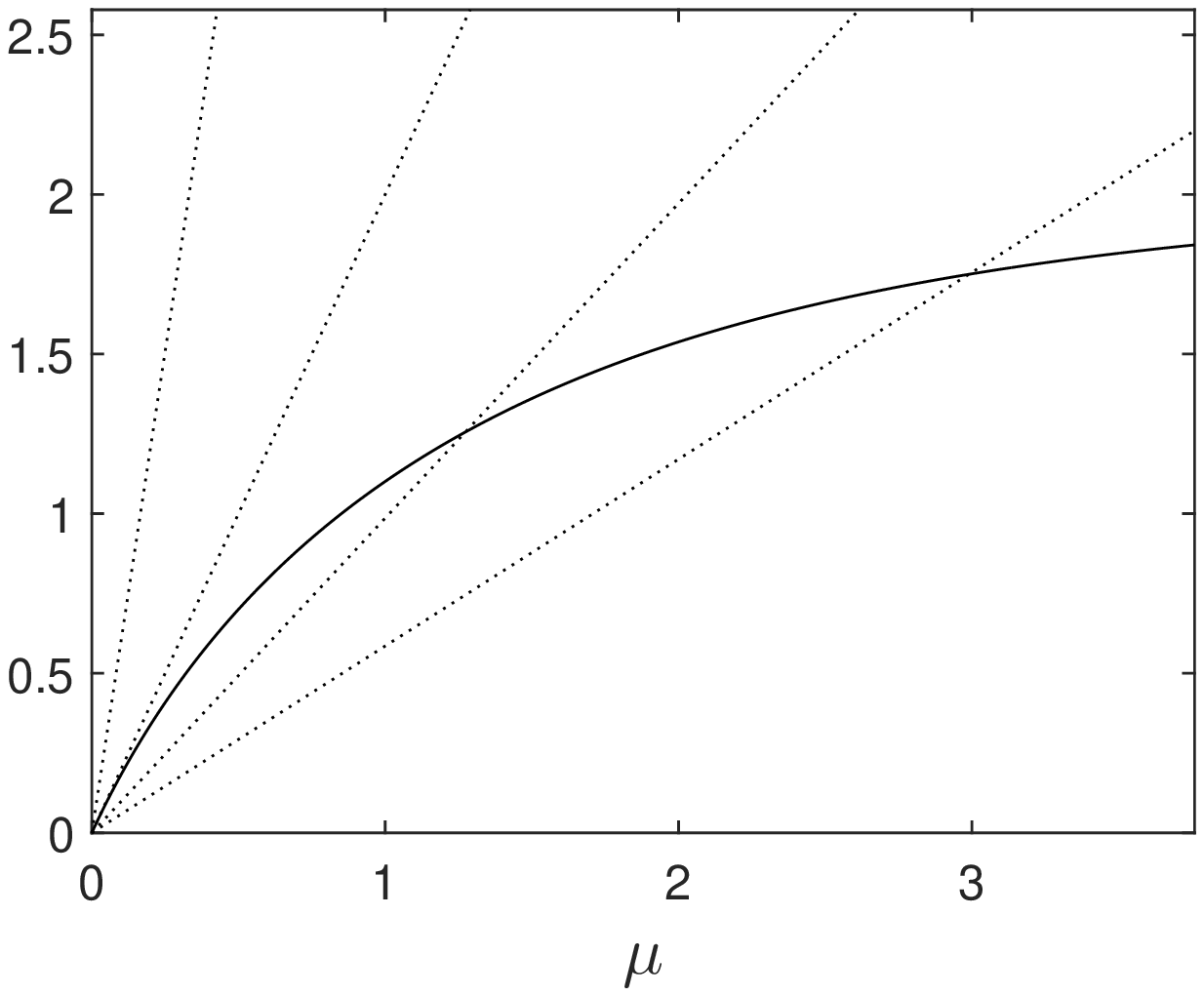}
    \end{subfigure}
    \hfill
    \begin{subfigure}[t]{0.47\textwidth}
        \centering
        \includegraphics[width=\linewidth]{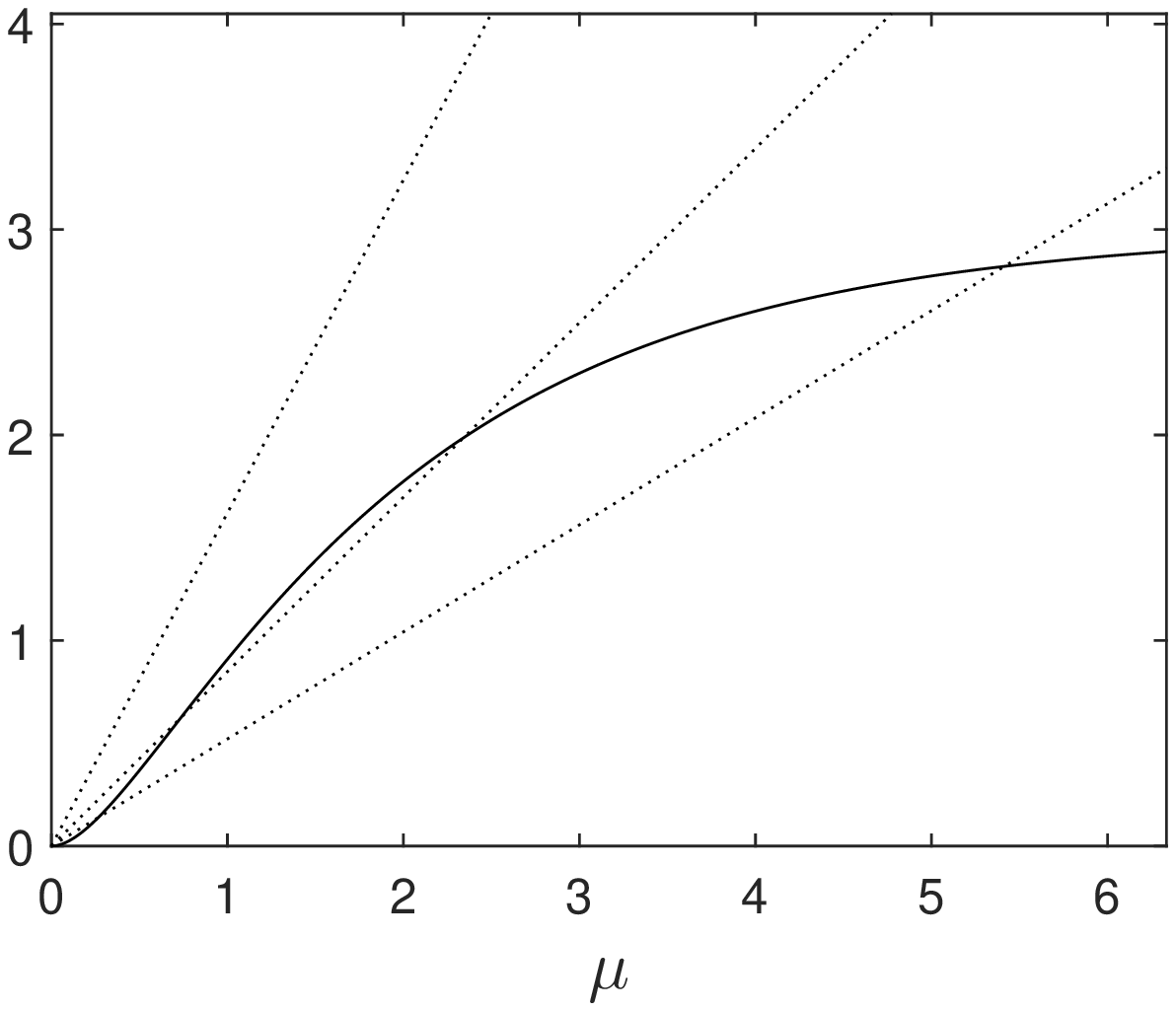}
    \end{subfigure}
    \caption{The replica symmetric solutions for the Rademacher prior, given by (\ref{eq:q-mu}). The left panel is $d = 2$ and the right panel is $d = 3$ (which is representative of all $d \ge 3$). The solid line is $d q^{d-1}$ as a function of $\mu$, where $q$ depends on $\mu$ via (\ref{eq:q-mu}). The dotted lines are $\frac{\mu}{\lambda^2}$ for various choices of $\lambda$ (with steeper lines corresponding to smaller $\lambda$). For a given $\lambda$, the intersections between the solid and dotted line are the solutions to (\ref{eq:q-mu}). Note that $\mu = 0$ is always a solution. For $d = 2$, a nonzero solution departs continuously from zero once $\lambda$ exceeds $1$. For $d \ge 3$, two nonzero solutions appear once $\lambda$ exceeds a particular value (which depends on $d$).}
    \label{fig:sol}
\end{figure}

\begin{figure}[!ht]
    \centering
    \begin{subfigure}[t]{0.47\textwidth}
        \centering
        \includegraphics[width=\linewidth]{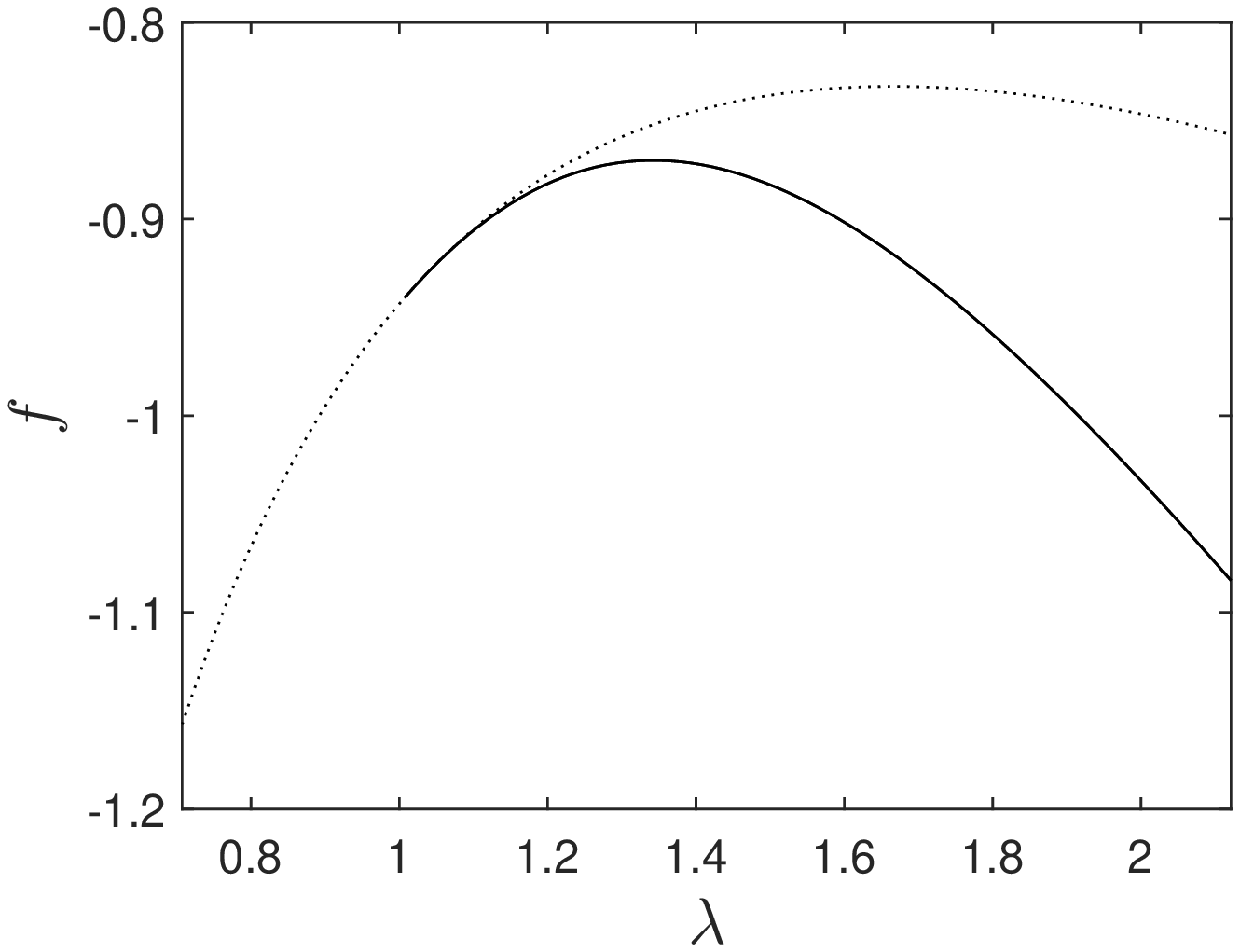}
    \end{subfigure}
    \hfill
    \begin{subfigure}[t]{0.47\textwidth}
        \centering
        \includegraphics[width=\linewidth]{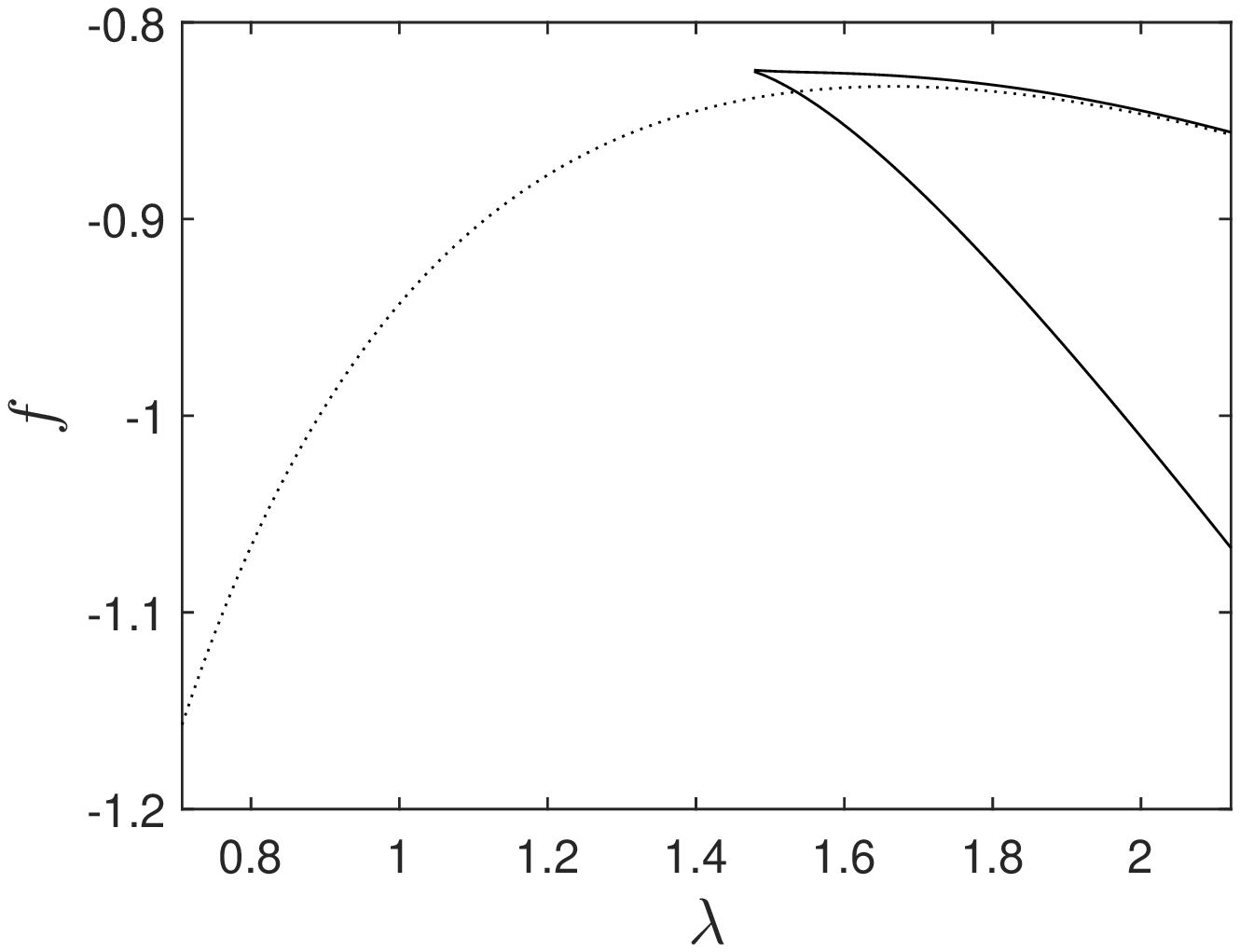}
    \end{subfigure}
    \caption{Free energy density of the replica symmetric solutions for the Rademacher prior (in the $n \to \infty$ limit), as a function of $\lambda$. The left panel is $d = 2$ and the right panel is $d = 3$ (which is representative of all $d \ge 3$). The dotted line is the zero solution $\mu = q = 0$. For $d = 2$, the solid line is the single nonzero solution. For $d = 3$, the solid lines are the two nonzero solutions with the bottom line corresponding to the larger (in terms of $\mu$) solution from Figure~\ref{fig:sol}. For each $\lambda$, the correct solution is the one of minimum free energy. The detection threshold is the value of $\lambda$ at which a nonzero solution first drops below the zero solution.}
    \label{fig:f}
\end{figure}

\begin{figure}[!ht]
    \centering
    \begin{subfigure}[t]{0.47\textwidth}
        \centering
        \includegraphics[width=\linewidth]{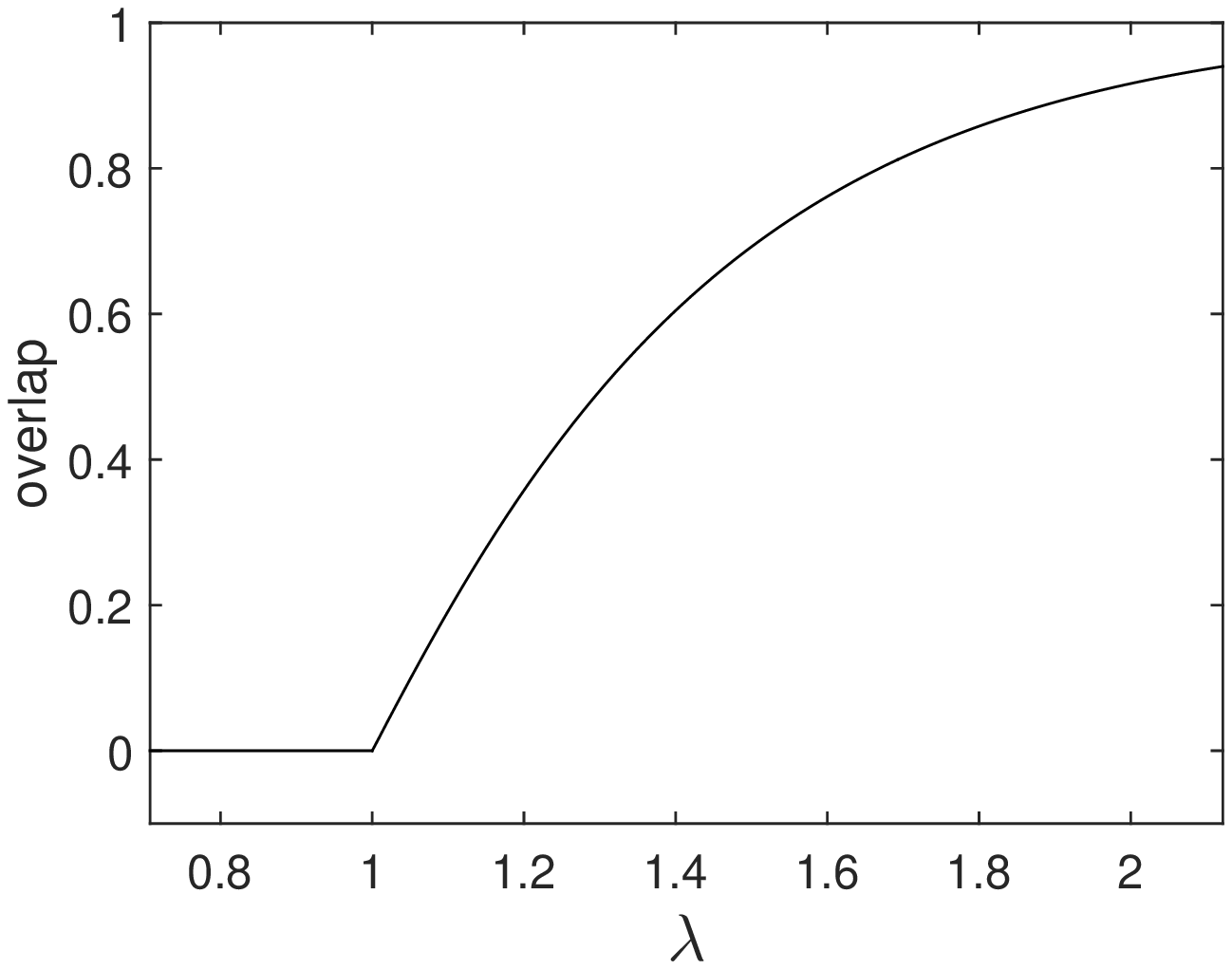}
    \end{subfigure}
    \hfill
    \begin{subfigure}[t]{0.47\textwidth}
        \centering
        \includegraphics[width=\linewidth]{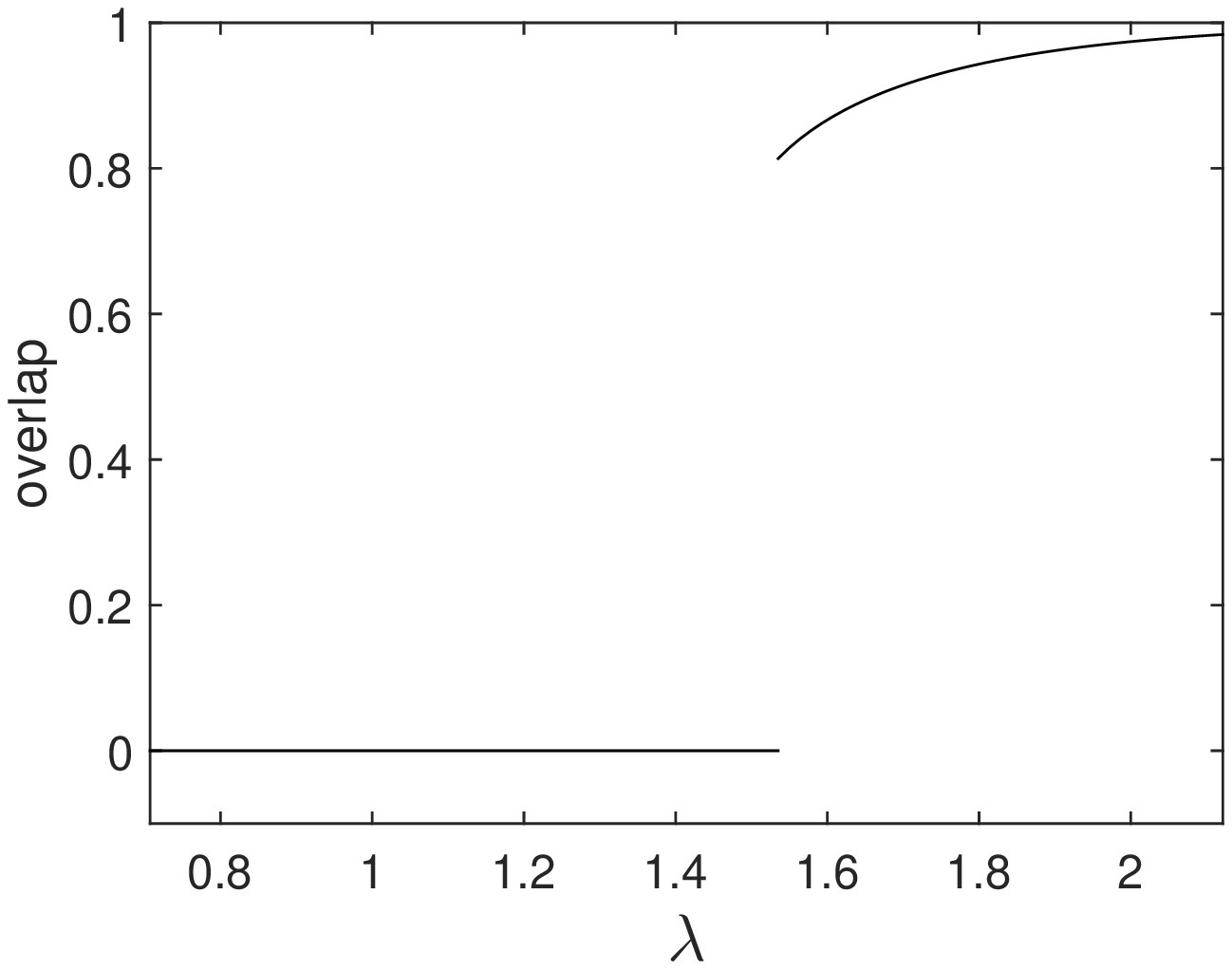}
    \end{subfigure}
    \caption{Overlap $q = \frac{1}{n} |\langle x,\hat x\rangle|$ between the true Rademacher spike $x$ and the MMSE estimator $\hat x$ (in the $n \to \infty$ limit), given by (\ref{eq:q-mu}). The left panel is $d = 2$ and the right panel is $d = 3$ (which is representative of all $d \ge 3$). The MMSE estimator has norm $\|\hat x\| = q \sqrt{n}$ or else its MSE could be improved by rescaling.}
    \label{fig:ov}
\end{figure}

Recall that the replicas are drawn from the posterior distribution $\prob{x \,|\, T}$ and so the truth $x$ behaves as if it is a replica; therefore we should have $c = q$. Using the identity $\EE_z \tanh(\gamma + \sqrt{\gamma}z) = \EE_z \tanh^2(\gamma + \sqrt{\gamma}z)$ (see e.g.\ \cite{dam}), we obtain the solution $c = q$ and $\nu = \mu$ where $q$ and $\mu$ are solutions to
\begin{equation}
\label{eq:q-mu}
\mu = \frac{\lambda^2}{2} d q^{d-1}, \qquad q = \Ex_{z \sim \cN(0,1)} \tanh(\mu + \sqrt{\mu}z).
\end{equation}
The free energy density of a solution to (\ref{eq:q-mu}) is given by
$$f = \frac{1}{\beta} \lim_{r \to 0} \frac{1}{r} G(q,c,\mu,\nu) = \frac{1}{\lambda}\left[-\frac{\lambda^2}{4} (q^d + 1) + \frac{1}{2} \mu (q+1) - \EE_z \log(2 \cosh(\mu + \sqrt{\mu}z))\right].$$

Figure~\ref{fig:sol} depicts the solutions to (\ref{eq:q-mu}). Note that (\ref{eq:q-mu}) always admits the zero solution $q = \mu = 0$. For $d = 2$, a nonzero solution appears once $\lambda$ exceeds $1$, recovering the eigenvalue transition of \cite{fp}. For $d \ge 3$, two nonzero solutions appear once $\lambda$ exceeds a particular value $\lambda_1^*$ (depending on $d$). The smaller of the two solutions is nonsensical because the overlap $q$ decreases with $\lambda$. In Figure~\ref{fig:f} we plot the free energy density of these solutions as a function of $\lambda$. For each $\lambda$ we expect that the correct solution is the one of minimum free energy density. Therefore we conclude that the detection threshold occurs at the point $\lambda_2^* > \lambda_1^*$ at which the free energy density of the larger nonzero solution drops below the free energy density of the zero solution. In Figure~\ref{fig:ov} we plot the overlap $q$ between the true spike and the optimal (MMSE) estimator; this is the same $q$ as in (\ref{eq:q-mu}). For $d = 2$ the overlap departs continuously from zero at the threshold, whereas for $d \ge 3$ it jumps discontinuously at the threshold.

For $d=2$, the result of this replica calculation has been rigorously shown to give the correct strong detection threshold ($\lambda = 1$) \cite{pwbm} as well as the correct recovery error for each $\lambda$ \cite{dam}.

For large $d$, the predicted detection threshold $\lambda_2^*$ approaches the constant $2 \sqrt{\log 2}$, matching the bounds in Theorem~\ref{thm:dinf-rademacher}. See Figure~\ref{fig:compare} for a comparison between the replica prediction and our rigorous upper and lower bounds.

For $d \ge 3$, it can be shown rigorously that the threshold for $\lambda$ given by the replica calculation is an upper bound on the true threshold, both for strong detection and weak recovery. For recovery, this follows from Theorem~1 of \cite{km} combined with the I-MMSE relation \cite{I-MMSE} along with the relation between free energy and mutual information (similarly to equation (10) in \cite{mi}). For detection, this follows again from Theorem~1 of \cite{km}, along with the fact (stated in \cite{km}, based on \cite{gt-limit}) that the free energy concentrates and thus provides reliable detection above the threshold. One can also deduce a rigorous lower bound on the weak recovery threshold using Theorem~3 of \cite{km} but it is weaker than our rigorous lower bound (and in fact, numerically appears to coincide with the lower bound of the basic second moment method without noise conditioning).

\FloatBarrier

\subsection{Spherical prior}

The computation in this section is similar to that of the previous section, so we will skip many of the details. This time we follow the corresponding unspiked computation in \cite{cs92}. The problem setup and normalization is the same as in the previous section except now the spike $x$ is drawn uniformly from vectors of norm $\sqrt{n}$. The only difference between this calculation and the Rademacher one is that we need to change the ``entropy term'' (\ref{eq:rad-entropy}). We have the free energy density
$$f = \frac{1}{\beta} \lim_{r \to 0} \frac{1}{r} G(q_{ab}^*,c_a^*)$$
where, borrowing the ``entropy term'' from equation (3.17) in \cite{cs92}, $(q_{ab}^*,c_a^*)$ is a critical point of
$$G(q_{ab},c_a) = -\frac{\lambda^2}{4}\sum_{a,b} q_{ab}^d - \frac{\lambda^2}{2} \sum_a c_a^d - \frac{1}{2}\log \det(Q)$$
where $Q$ is the $(r+1)\times (r+1)$ matrix of overlaps ($q_{ab}$ and $c_a$), including the true spike as the zeroth replica. Again, since we are on the Nishimori line, the replica symmetric ansatz should be valid: $q_{aa} = 1$, $q_{ab} = c_a = q$ for $a \ne b$. This yields
$$G(q) = -\frac{\lambda^2}{4} [r + r(r+1)q^d] - \frac{1}{2}\log[(1-q)^r((r+1)q+1-q)]$$
and so
$$\lim_{r \to 0} \frac{1}{r} G(q) = -\frac{\lambda^2}{4} (q^d + 1) - \frac{q}{2} - \frac{1}{2} \log(1-q).$$
Solving for stationary points in $q$,
\begin{equation}
\label{eq:sph-q}
\frac{\lambda^2}{2} d q^{d-1}(1-q) = q.
\end{equation}
The free energy density of a solution to (\ref{eq:sph-q}) is given by
$$f = \frac{1}{\beta} \lim_{r \to 0} \frac{1}{r} G(q) = \frac{1}{\lambda}\left[-\frac{\lambda^2}{4} (q^d + 1) - \frac{q}{2} - \frac{1}{2} \log(1-q)\right].$$

Qualitatively, the results are similar to the Rademacher prior of the previous section. For $d = 2$, a single nonzero solution to (\ref{eq:sph-q}) appears once $\lambda$ exceeds $1$. For $d \ge 3$, two solutions appear once $\lambda$ exceeds some value $\lambda_1^*$ (depending on $d$). The smaller of these solutions is nonsensical because $q$ decreases with $\lambda$. The larger solution becomes correct when $\lambda$ exceeds $\lambda_2^* > \lambda_1^*$, at which point its free energy drops below the free energy of the zero solution. The detection threshold occurs at $\lambda_2^*$.

For the matrix case ($d=2$), we recover the threshold $\lambda=1$ corresponding to the eigenvalue transition, as well as the correct overlap $q$ between the MMSE estimator and the truth (see Theorem~\ref{thm:bbp-wig}); the MMSE estimator is (asymptotically as $n\to\infty$) given by rescaling the top eigenvector to have norm $q \sqrt{n}$.

Following the argument style of Appendix~\ref{app:asymptotics}, one computes that as $d \to \infty$, the squared predicted detection threshold behaves as
$$ (\lambda_2^*)^2 = 2 \log d + 2 \log \log d + o(1), $$
implying the cruder statement $\lambda_2^* = \sqrt{2 \log d} + o(1)$. See Figure~\ref{fig:compare} for a comparison between the replica prediction and our rigorous upper and lower bounds.

\section{Sparse Rademacher tail bounds}
\label{app:sparse-rad-tail}

Throughout this section we fix $\rho \in (0,1]$ and consider the sparse Rademacher prior $\cX_\spRade(\rho)$. The notation $\simeq, \lesssim$ will denote statements that are true up to constants that may depend on $\rho$ but not on any other variable. Recall from Section~\ref{sec:sparse-rad} the sparse Rademacher rate function, which we will denote by $f$:
\begin{equation}
\label{eq:sparse-rate}
f(t) = \min_{\zeta \in [\max(\rho t,2\rho-1),\rho]} G(\zeta) + \zeta f_\Rade\left(\frac{\rho t}{\zeta}\right)
\end{equation}
where
$$G(\zeta) = -H(\{\zeta,p-\zeta,p-\zeta,1-2\rho+\zeta\}) + 2H(\rho)$$
and where $f_\Rade$ is the Rademacher rate function:
$$f_\Rade(t) = \log 2 - H\left(\frac{1+t}{2}\right).$$

In this section (as discussed in Section~\ref{sec:sparse-rad}) we prove the necessary tail bounds to show that $f$ indeed satisfies our definition of rate function.

\begin{repproposition}{prop:sparse-tail-weak}
Let $x,x'$ be drawn independently from the sparse Rademacher prior $\cX_\spRade(\rho)$. Then for any $t \in [0,1]$ we have
$$\prob{\langle x,x' \rangle \ge t} \lesssim n^{3/2} \exp(-n f(t))$$
where $f$ is the rate function \emph{(\ref{eq:sparse-rate})}.
% used \emph here to de-italicize eq number
\end{repproposition}

\begin{proof}
Let $z$ be the number of indices on which the supports of $x$ and $x'$ overlap, and let $\zeta = z/n$. Note that the set of possible $\zeta$ values is precisely the interval $[\max(0,2\rho-1),\rho]$. In order to have $\prob{\langle x,x' \rangle \ge t}$ we also need $\zeta \ge \rho t$, which restricts us to the interval $\mathcal{I} \defeq [\max(0,2\rho-1,\rho t),\rho]$ appearing in (\ref{eq:sparse-rate}). Using the Rademacher Chernoff bound we can write
\begin{equation}
\label{eq:z-sum}
\prob{\langle x,x' \rangle \ge t} \le \sum_z \prob{z} \exp\left(-n \zeta f_\Rade\left(\frac{\rho t}{\zeta}\right)\right).
\end{equation}
where $z$ ranges over all integer values for which $\zeta = z/n \in \mathcal{I}$. Note that $z$ follows the hypergeometric distribution $z \sim \mathrm{Hyp}(n,\rho n,\rho n)$ given by
$$\prob{z} = \frac{\binom{\rho n}{z} \binom{(1-\rho)n}{\rho n-z}}{\binom{n}{\rho n}}.$$
By Stirling's approximation we have $n! \simeq n^{n+\frac{1}{2}}e^{-n}$ and so $\binom{n}{k} \simeq \sqrt{\frac{n}{k(n-k)}} \exp(n H(k/n))$. (By $A \simeq B$ we mean there exist positive constants $C_1,C_2$ such that $C_1 A \le B \le C_2 A$.) This means
\begin{align*}
\prob{z} &\lesssim \sqrt{n} \exp\left\{n\left[\rho H\left(\frac{\zeta}{\rho}\right) + (1-\rho)H\left(\frac{\rho-\zeta}{1-\rho}\right) - H(\rho)\right]\right\} \\
&= \sqrt{n} \exp\left\{n\left[H(\{\zeta,p-\zeta,p-\zeta,1-2\rho+\zeta\}) - 2H(\rho)\right]\right\} \\
&= \sqrt{n} \exp\left(-n G(\zeta)\right).
\end{align*}
The result now follows by bounding the sum in (\ref{eq:z-sum}) by $n$ times its largest term.
\end{proof}

\begin{repproposition}{prop:sparse-tail-strong}
For every $\rho \in (0,1]$ there exist constants $C,T > 0$ such that when $x,x'$ are drawn independently from $\cX_\spRade(\rho)$, we have
$$\prob{\langle x,x' \rangle \ge t} \le C \exp(-n f(t)) \qquad \forall t \in [0,T].$$
\end{repproposition}

\begin{proof}
For small $t$, the last term $\zeta f_\Rade\left(\frac{\rho t}{\zeta}\right)$ in the definition of $f$ becomes negligible (converges to zero uniformly in $\zeta \in [\rho t,\rho]$ as $t \to 0$ by Dini's theorem) and so $f(t)$ becomes approximately the minimum value of $G(\zeta)$ over the valid range $\zeta \in [\max(\rho t,2\rho-1),\rho]$.
The minimum of $G(\zeta)$ occurs at $\zeta = \rho^2$
and furthermore $G(\zeta)$ is strongly convex. Let $\zeta^* = \zeta^*(t)$ be the minimizer in the expression for the rate function (\ref{eq:sparse-rate}) so that $f(t) = G(\zeta^*) + \zeta^* f_\Rade\left(\frac{\rho t}{\zeta^*}\right)$. The above implies $\zeta^* \to \rho^2$ as $t \to 0$. We also have $\frac{\partial^2}{\partial \zeta^2}\left[\zeta f_\Rade\left(\frac{\rho t}{\zeta}\right)\right] = \frac{\rho^2 t^2}{\zeta^3 - \rho^2 t^2 \zeta}$, which converges uniformly (in $\zeta$) to zero as $t \to 0$ in a neighborhood (for $\zeta$) containing $\rho^2$.

It follows from the above that we can choose $T$ small enough so that there exists $\delta > 0$ (depending on $\rho$ but not $t$) so that
$$G(\zeta) + \zeta f_\Rade\left(\frac{\rho t}{\zeta}\right) \ge f(t) + \delta (\zeta - \zeta^*)^2 \qquad \forall t \in [0,T], \zeta \in \mathcal{I}.$$

Let $\eps$ be a small constant (depending on $\rho$ but not $t$) to be chosen later. We first consider the sum (\ref{eq:z-sum}) over the values $\zeta \in \mathcal{I}' \defeq [\max(0,2\rho-1) + \eps, \rho - \eps]$, i.e. discarding a small interval at each endpoint of $\mathcal{I}$. For $\zeta \in \mathcal{I}'$, Stirling's approximation yields (see the proof of Proposition~\ref{prop:sparse-tail-weak} above)
$$\prob{z} \simeq \frac{1}{\sqrt n} \exp\left(-nG(\zeta)\right).$$

\noindent Now, starting from (\ref{eq:z-sum}), we have
\begin{align*}
\sum_{z \, :\, \zeta \in \mathcal{I}'} \prob{z} \exp\left(-n \zeta f_\Rade\left(\frac{\rho t}{\zeta}\right)\right)
&\lesssim \frac{1}{\sqrt n} \sum_{z \, :\, \zeta \in \mathcal{I}'} \exp\left\{-n\left[G(\zeta) + \zeta f_\Rade\left(\frac{\rho t}{\zeta}\right)\right]\right\} \\
&\le \frac{1}{\sqrt n} \sum_{z} \exp\left(-n f(t) - n \delta (\zeta - \zeta^*)^2 \right) \\
&= \exp(-n f(t)) \cdot \frac{1}{\sqrt n} \sum_{z} \exp\left(-n \delta (\zeta - \zeta^*)^2 \right) \\
&\lesssim \exp(-nf(t))
\end{align*}
where the last step follows from (the proof of) Lemma~10 in \cite{bmnn}.

It remains to bound the terms in (\ref{eq:z-sum}) for which $\zeta$ lies outside $\mathcal{I}'$. First consider $\zeta \in [0,\eps]$ (in the case $\rho < \frac{1}{2}$). Here, instead of applying Stirling's approximation to $\binom{\rho n}{z}$, we apply the simpler bound $\binom{n}{k} \le \left(\frac{ne}{k}\right)^k$ to yield $\binom{\rho n}{z} \le \binom{\rho n}{\eps n} \le \left(\frac{\rho e}{\eps}\right)^{\eps n}$. As above, we apply Stirling's approximation to the other two binomial coefficients in $\prob{z}$, yielding
$$\frac{\binom{(1-\rho)n}{\rho n-z}}{\binom{n}{\rho n}} \simeq \exp\left\{n\left[(1-\rho)H\left(\frac{\rho-\zeta}{1-\rho}\right) - H(\rho)\right]\right\} \le \exp(-nG(\zeta)).$$
Also, since $\zeta \in [0,\eps]$ is far from the minimizer $\zeta^* \approx \rho^2$ (for small $t$), there exists a constant $\Delta$ (depending on $\rho$) such that
$$G(\zeta) + \zeta f_\Rade\left(\frac{\rho t}{\zeta}\right) \ge f(t) + \Delta \qquad \forall t \in [0,T], \zeta \in [0,\eps].$$
This yields
\begin{align*}
\sum_{z \, :\, \zeta \in [0,\eps]} \prob{z} \exp\left(-n \zeta f_\Rade\left(\frac{\rho t}{\zeta}\right)\right)
&\lesssim \sum_{z \, :\, \zeta \in [0,\eps]} \left(\frac{\rho e}{\eps}\right)^{\eps n} \exp\left\{-n\left[G(\zeta) + \zeta f_\Rade\left(\frac{\rho t}{\zeta}\right)\right]\right\} \\
&\le \sum_{z \, :\, \zeta \in [0,\eps]} \left(\frac{\rho e}{\eps}\right)^{\eps n} \exp(-nf(t) - n \Delta) \\
&\le \exp(-nf(t)) \cdot \eps n \left[\left(\frac{\rho e}{\eps}\right)^\eps e^{-\Delta} \right]^n,
\end{align*}
which is $o(1) \exp(-nf(t))$ provided we choose $\eps$ small enough so that the term in brackets $[\cdots]$ is less than 1; this is possible because $\lim_{\eps \to 0^+} \left(\frac{\rho e}{\eps}\right)^\eps = 1$.

A similar argument applies to the other boundary cases: $\zeta \in [2\rho-1,2\rho-1 + \eps]$ (for $\rho > \frac{1}{2}$) and $\zeta \in [\rho - \eps, \rho]$.
\end{proof}

\section{Sparse Rademacher asymptotics}
\label{app:sparse-rad-p0}

In this section we analyze the $\rho \to 0$ asymptotics for our lower bound on the sparse Rademacher threshold.

\begin{repproposition}{prop:sparse-rad-p0}
Fix any $d \ge 2$. Let $f_\rho(t) = f_{\cX_\spRade (\rho)}$, the sparse Rademacher rate function. Let $\lambda^* = \lambda^*(\rho)$ be our lower bound on the detection threshold for the sparse Rademacher prior, namely
$$(\lambda^*)^2 = 2 \inf_{t \in (0,1)} \frac{1+t^d}{t^d} f_\rho(t).$$
In the limit $\rho \to 0$, we have
$$\lambda^* \ge 2\sqrt{-\rho \log \rho - O(\rho)}.$$
\end{repproposition}

\begin{proof}
Since $x \mapsto \frac{1+x}{x}$ is a decreasing function for $x > 0$, it is sufficient to prove the statement for $d = 2$. Recall the definition of $f_\rho$ (for $\rho < \frac{1}{3}$):
$$f_\rho(t) = \min_{\zeta \in [\rho t,\rho]} G(\zeta) + \zeta f_\Rade\left(\frac{\rho t}{\zeta}\right)$$
where
$$G(\zeta) = -H(\{\zeta,p-\zeta,p-\zeta,1-2\rho+\zeta\}) + 2H(\rho).$$
We can bound the Rademacher rate function $f_\Rade$ by using the bound $H(a) \le \log 2 - 2(a-\frac{1}{2})^2$ as follows:
$$f_\Rade(t) \defeq \log 2 - H\left(\frac{1+t}{2}\right) \ge \frac{t^2}{2}.$$

\noindent Our lower bound shows non-detection provided that
$$\lambda^2 < 2 \inf_{t \in (0,1)}\, \min_{\zeta \in [\rho t,\rho]} \frac{1+t^2}{t^2}\left(G(\zeta) + \frac{\rho^2 t^2}{2 \zeta}\right) = 2 \inf_{\zeta \in (0,\rho]} \, \inf_{t \in (0,\zeta/\rho]} (1+t^2)\left(\frac{G(\zeta)}{t^2} + \frac{\rho^2}{2\zeta}\right),$$
or equivalently,
\begin{equation}
\label{eq:t-opt}
\lambda^2 < 2 \inf_{t \in (0,\zeta/\rho]} (1+t^2)\left(\frac{G(\zeta)}{t^2} + \frac{\rho^2}{2\zeta}\right) \qquad \forall \zeta \in (0,\rho].
\end{equation}

\noindent We perform the optimization over $t$ in closed form. Since $G(\zeta) \ge 0$, the function $t \mapsto (1+t^2)\left(\frac{G(\zeta)}{t^2} + \frac{\rho^2}{2\zeta}\right)$ is convex on $t \in (0,\infty)$ with its minimum at $t^2 = \frac{1}{\rho}\sqrt{2\zeta G(\zeta)}$. We will break into two cases depending on whether this minimum occurs before or after the maximum allowable value $\zeta/\rho$ for $t$.

First consider the $\zeta$ values satisfying
\begin{equation}
\label{eq:zeta-cond}
\frac{1}{\rho}\sqrt{2\zeta G(\zeta)} \ge \frac{\zeta^2}{\rho^2}
\end{equation}
so that the infimum over $t$ in (\ref{eq:t-opt}) is optimized at $t = \zeta/\rho$. In this case we require
$$\lambda^2 < 2 (1 + \zeta^2/\rho^2)\left(\frac{G(\zeta) \rho^2}{\zeta^2}+\frac{\rho^2}{2\zeta}\right) = 2 \left(\frac{\rho^2 + \zeta^2}{\zeta^2}\right)\left(G(\zeta) + \frac{\zeta}{2}\right),$$
or equivalently,
$$G(\zeta) + \frac{\zeta}{2} > \frac{\lambda^2}{2} \cdot\frac{\zeta^2}{\rho^2+\zeta^2}.$$
If $\zeta < \frac{\rho^2}{\lambda^2}$ then this is satisfied because $G(\zeta) \ge 0$, so we can assume $\zeta \ge \frac{\rho^2}{\lambda^2}$. We will use the bound
\begin{equation}
\label{eq:G-bound}
G(\zeta) \ge \zeta \log\left(\frac{\zeta(1-\rho)}{ep^2}\right).
\end{equation}

\noindent The motivation for the bound (\ref{eq:G-bound}) comes from the argument in \cite{bmvx} where they compare the hypergeometric random variable to a binomial. One can prove (\ref{eq:G-bound}) directly by showing that $G(\zeta) - \zeta \log\left(\frac{\zeta(1-\rho)}{ep^2}\right)$ is jointly convex in $\zeta,\rho$ (since the Hessian has positive trace and determinant) and has a stationary point in the limit $\zeta \to 0, \rho \to 0$ (at which point the bound holds).

Combining (\ref{eq:G-bound}) with $\frac{\rho^2}{\lambda^2} \le \zeta \le \rho$ and the fact that $\zeta \mapsto \frac{\zeta}{\rho^2 + \zeta^2}$ is increasing for all $\zeta \le \rho$, it is sufficient to show
$$\log\left(\frac{1-\rho}{e\lambda^2}\right) + \frac{1}{2} > \frac{\lambda^2}{4\rho}.$$
Rearranging this yields
$$\frac{\lambda^2}{4\rho} \exp\left(\frac{\lambda^2}{4\rho}\right) < \frac{1-\rho}{4\rho\sqrt{e}},$$
or equivalently $\frac{\lambda^2}{4\rho} < \mathcal{W}\left(\frac{1-\rho}{4\rho\sqrt{e}}\right)$ where $\mathcal{W}(y)$ (for $y \ge 0$) denotes the (unique) solution to $x e^x = y$. For $y > e$ we have $\mathcal{W}(y) < \log y$ and so for sufficiently small $\rho$, we are done provided that $\lambda < 2 \sqrt{-\rho \log \rho - O(\rho)}$ as desired.

It remains to consider the other case: the $\zeta$ values for which (\ref{eq:zeta-cond}) fails. In this case the infimum over $t$ in (\ref{eq:t-opt}) is optimized at $t^2 = \frac{1}{\rho} \sqrt{2 \zeta G(\zeta)}$ and (\ref{eq:t-opt}) becomes
$$\frac{\lambda^2}{2} < \left(\sqrt{G(\zeta)} + \frac{\rho}{\sqrt{2\zeta}}\right)^2$$
and so it is sufficient to show $\frac{\rho^2}{2\zeta} > -2\rho \log \rho$. Assume on the contrary that $\zeta \ge \frac{\rho}{4\log(1/\rho)}$. Combining the bound (\ref{eq:G-bound}) with the fact that (\ref{eq:zeta-cond}) fails yields
$$\frac{\zeta^2}{\rho^2} > 2 \log\left(\frac{\zeta(1-\rho)}{e\rho^2}\right).$$
Plugging in $\zeta \ge \frac{\rho}{4\log(1/\rho)}$ gives
$$1 \ge \frac{\zeta^2}{\rho^2} > 2 \log\left(\frac{\zeta(1-\rho)}{e\rho^2}\right)
\ge 2 \log\left(\frac{(1-\rho)}{4e\rho \log(1/\rho)}\right) = 2 \log(1/\rho) - 2 \log\log(1/\rho) + O(1) = \Theta(\log(1/\rho))$$
as $\rho \to 0$, which is a contradiction for sufficiently small $\rho$. This completes the proof.
\end{proof}

\section{Proof of MAP upper bound}\label{app:proof-entropy-upper-bound}

In this appendix we prove the following upper bound:
\begin{repproposition}{prop:entropy-upper-bound}
Fix $d\ge 2$ and let $\cX$ be a prior on the unit sphere in $\RR^n$, with bounded R\'enyi entropy density: there exists $\delta > 0$ with 
$$ \limsup_{n \to \infty} \frac1n H_{1-\delta}(\cX) < \infty. $$
Suppose furthermore that the varentropy $V(\cX) = \Var_{x \sim \cX}[-\log \Pr_\cX(x)]$ is $o(n^2)$.
Let $s = \limsup_{n \to \infty} \frac{1}{n} H_1(\cX)$, the Shannon entropy density. Then when $\lambda > 2\sqrt{s}$, there exists a hypothesis test distinguishing $\WT(d,\lambda,\cX)$ from $\WT(d)$ with probability $o(1)$ of error (i.e.\ strong detection is possible) and furthermore there exists an estimator that achieves nontrivial correlation with the spike (i.e.\ weak recovery is possible).
\end{repproposition}
\begin{proof}
Given a tensor $T$, consider the statistic
\begin{equation}
\label{eq:map}
m = \sup_{v \in \supp \cX}\; \langle T, v^{\otimes d} \rangle + \frac{2}{n\lambda} \log \Pr_\cX(v).
\end{equation}
We will show that thresholding $m$ suitably yields a hypothesis test that distinguishes the spiked and unspiked models with $o(1)$ probability of error of either type.

Suppose first that $T$ is drawn from the spiked model $\WT(d,\lambda,\cX)$ with spike $x$. Then
$$ \EE m \geq \EE\left[ \langle T, x^{\otimes d} \rangle + \frac{2}{n\lambda} \log \Pr_\cX(x) \right] = \lambda - \frac{2s}{\lambda}, $$
and moreover, $\langle T, x^{\otimes d} \rangle + \frac{2}{n\lambda} \log \Pr_\cX(x)$ concentrates to within $o(1)$ with high probability, using a Gaussian tail bound on the noise and Chebyshev on the entropy term, using the varentropy assumption. Thus for any fixed $\eps > 0$, $m \ge \lambda - 2s/\lambda - \eps$ with probability $1 - o(1)$.

On the other hand, suppose that $T$ is drawn from the unspiked model $\WT(d)$. Then
\begin{align*}
    &\problr{m \geq \lambda - \frac{2s}{\lambda} - 2\eps}
    \leq \sum_v \Pr\left[ \langle T, v^{\otimes d} \rangle \geq \lambda - 2\eps - \frac{2s}{\lambda} - \frac{2}{n\lambda} \log \Pr_\cX(v) \right] \\
    &\qquad\qquad\qquad= \sum_v \Pr\left[ \cN(0,2/n) \geq \lambda - 2\eps - \frac{2s}{\lambda} - \frac{2}{n\lambda} \log \Pr_\cX(v) \right] \\
    &\qquad\qquad\qquad\leq \sum_v \exp\left( -\frac{n}{4} \left(\lambda - 2\eps - \frac{2s}{\lambda} - \frac{2}{n\lambda} \log \Pr_\cX(v) \right)^2 \right) \\
    &\qquad\qquad\qquad= \exp\left(-\frac{n}{4} \left( (\lambda-2\eps)^2 - 4s\, \frac{\lambda-2\eps}{\lambda} \right) \right) \sum_v \exp\left( \frac{\lambda-2\eps}{\lambda} \log\Pr_\cX(v) - \frac{1}{n \lambda^2} (ns - \log \Pr_\cX(v))^2 \right) \\
    &\qquad\qquad\qquad\leq \exp\left(-\frac{n}{4} \left( (\lambda-2\eps)^2 - 4s\, \frac{\lambda-2\eps}{\lambda} \right) \right) \sum_v \Pr_\cX(v)^{1-2\eps/\lambda} \\
    &\qquad\qquad\qquad= \frac{2\eps}{\lambda} \exp\left(-\frac{n}{4} \left( (\lambda-2\eps)^2 - 4s\, \frac{\lambda-2\eps}{\lambda} - \frac{2\eps}{n\lambda} H_{1-2\eps/\lambda}(\cX) \right) \right).
\end{align*}
As $H_\alpha$ is monotonically decreasing in $\alpha$, we have that $\frac1n H_{1-2\eps/\lambda}(\cX) \leq \frac1n H_{1-\delta}(\cX)$ is bounded for sufficiently small $\eps$ as $n \to \infty$; thus as $\eps \to 0$, we have that
$$ -\frac{2\eps}{n\lambda} H_{1-2\eps/\lambda}(\cX) \to 0 $$
uniformly in $n$. Thus, so long as $\lambda > 2 \sqrt{s}$, we can choose $\eps > 0$ such that this probability of error is $o(1)$. Hence, thresholding the statistic $m$ at $\lambda - \frac{2s}{\lambda} - \eps$, we obtain a hypothesis test with $o(1)$ probability of error.

In order to perform weak recovery given $T = \lambda x^{\otimes d} + W$, output the $v$ that maximizes (\ref{eq:map}). From the analysis above we have with probability $1-o(1)$, $\langle T,v^{\otimes d} \rangle + \frac{2}{n\lambda} \log \Pr(v) \ge \lambda - \frac{2s}{\lambda}- \eps$ and $\langle W,v^{\otimes d} \rangle + \frac{2}{n\lambda} \log \Pr(v) \le \lambda - \frac{2s}{\lambda} - 2\eps$. This means we have
$$\lambda - \frac{2s}{\lambda} - \eps \le \langle T, v^{\otimes d}\rangle + \frac{2}{n\lambda}\log \Pr(v) = \lambda \langle x,v \rangle^d + \langle W,v^{\otimes d} \rangle + \frac{2}{n\lambda}\log \Pr(v) \le \lambda \langle x,v \rangle^d + \lambda - \frac{2s}{\lambda} - 2\eps$$
and so $\langle x,v \rangle^d \ge \eps/\lambda$, implying weak recovery.
\end{proof}

\end{document}